\numberwithin{equation}{section}
\theoremstyle{plain}
\newtheorem{thm}{Theorem}
\newtheorem{prop}{Proposition}[section]
\newtheorem{lemma}[prop]{Lemma}
\newtheorem{cor}[prop]{Corollary}
\newtheorem{conj}[prop]{Conjecture}
\theoremstyle{definition} \theoremstyle{definition}
\newtheorem{defn}[prop]{Definition}
\newtheorem{rem}[prop]{Remark}
\newtheorem*{ack}{Acknowledgments}
\newcommand{\doubleslash}{/\!\!/}
\newcommand{\Mod}{\text{-Mod}}
\newcommand{\Hecke}{\mathcal{H}}
\newcommand{\RR}{\mathcal{R}}
\newcommand{\I}{{\mathcal{I}}}
\newcommand{\OO}{\mathcal{O}}
\newcommand{\g}{\mathfrak{g}}
\newcommand{\ellip}{\mathrm{ell}}
\newcommand{\fraks}{\mathfrak{s}}
\newcommand{\Z}{\mathbb{Z}}
\newcommand{\R}{\mathbb{R}}
\newcommand{\F}{\mathbb{F}}
\newcommand{\C}{\mathbb{C}}
\newcommand{\PP}{\mathbb{P}}
\newcommand{\M}{\mathbb{M}}
\DeclareMathOperator{\Ext}{Ext}
\DeclareMathOperator{\Hom}{Hom}
\DeclareMathOperator{\End}{End}
\DeclareMathOperator{\Ind}{Ind}
\DeclareMathOperator{\ind}{ind}
\DeclareMathOperator{\diag}{diag}
\def\SL{{\rm SL}}
\def\Ps{{\rm Ps}}
\def\EP{{\rm EP}}
\def\GSp{{\rm GSp}}
\def\GL{{\rm GL}}
\newcommand{\set}[2]{ 
	{\left\{\left.
	#1\vphantom{#2\bigl(\bigr)}\,\right|
	\,#2\right\}}}
\begin{document}

\title{Extensions of representations of $p$-adic groups}

\author{Jeffrey D. Adler}
\address{
American University \\
Washington, DC  20016-8050\\
USA} 
\email{jadler@american.edu}

\author{Dipendra Prasad}
\address{
Tata Institute of Fundamental Research \\
Homi Bhabha Road \\
Mumbai - 400 005 \\
INDIA}
\email{dprasad@math.tifr.res.in}

\subjclass[2010]{Primary 11F70; Secondary 22E55}

\date{\today}

\begin{abstract}
We calculate extensions between certain irreducible admissible 
representations of $p$-adic groups.
\end{abstract}

\maketitle

\centerline{\sf To Hiroshi Saito, in memoriam}
\setcounter{tocdepth}{1}

\tableofcontents

\section{Introduction}
The classification of irreducible admissible representations of groups over local fields has been a very 
active and successful branch of mathematics. One next step in the subject 
would be to understand all possible
extensions between irreducible representations.
Many results of a general
kind are known about extensions between admissible representations of $p$-adic groups, most 
notably the notion of the Bernstein center and many other results of Bernstein and Casselman.
These results 
reduce the question to one between components of one parabolically induced representation,
cf.\ Lemma \ref{lem:cuspidal-support} below.
Specific calculations seem not to have attracted attention
except for
$\Ext^i_G(\C,\C)$, 
which is the cohomology $H^i(G,\C)$ of $G$ in terms of measurable cochains;
besides these, extensions of generalized Steinberg representations are studied in \cites{dat,orlik}.
In this paper,
we calculate $\Ext^i_G(\pi_1,\pi_2)$, abbreviated to $\Ext^i(\pi_1,\pi_2)$,
between certain irreducible admissible 
representations $\pi_1,\pi_2$ 
of $G = {\sf G}(k)$ where ${\sf G}$ is a connected 
reductive algebraic group over a non-archimedean local field $k$ of characteristic 0; we 
abuse notation in the usual way and call $G$ itself a connected reductive algebraic group.

Since extensions of representations of 
abelian groups are well understood through the cohomology $H^i(\Z^n,\C)$ of $\Z^n$,
it is no loss of generality when considering extensions $\Ext^i(\pi_1,\pi_2)$ to restrict oneself
to the subcategory $\RR^\chi(G)$ of the category $\RR(G)$ of all smooth representations
of $G$, consisting of those representations on which the center of $G$
acts via a given character $\chi$, which we can also assume to be unitary.

We have two main results.
The first is as follows.

\begin{thm}
\label{thm:main}
Let $G$ be a reductive $p$-adic group over $k$, and $P$ a maximal $k$-parabolic
subgroup of $G$ with Levi decomposition $P=MN$.
Let $\sigma$ be an irreducible,
supercuspidal representation
of $M$,
and let
$\pi=i_{P}^{G}\sigma$,
where $i_{P}^{G}$ denotes normalized induction.
If $\pi$ is irreducible, then
$$
\Ext^1_{\RR^\chi(G)}(\pi,\pi) = \C.
$$
If $\pi$ is reducible,
then it has two inequivalent, irreducible subquotients.
Let $\pi_1$ and $\pi_2$ denote these two subquotients.
Then 
$$
\Ext^1_{\RR^\chi(G)}(\pi_i,\pi_j) =
\begin{cases}
0 & \text{if $i =  j$}, \\
\C & \text{if $i\neq j$}.
\end{cases}
$$
\end{thm}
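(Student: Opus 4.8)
\medskip
\noindent\textbf{Proof proposal.}
The plan is to push everything into the single Bernstein block $\mathfrak{s}=[M,\sigma]_G$ of $\RR^\chi(G)$ containing $\pi$ --- hence containing $\pi_1,\pi_2$ and all extensions between them, by Lemma~\ref{lem:cuspidal-support} --- and to combine Bernstein's second adjointness with the hypothesis that $P$ is \emph{maximal}. Write $r_P^G,r_{\bar P}^G$ for the normalized Jacquet functors along $P$ and the opposite parabolic. Since $i_P^G$, $r_P^G$, $r_{\bar P}^G$ are all exact and $r_P^G$ (resp.\ $r_{\bar P}^G$) is left (resp.\ right) adjoint to $i_P^G$, the adjunctions induce natural isomorphisms on $\Ext$-groups: for every $\tau$,
\[
\Ext^i_{\RR^\chi(G)}(i_P^G\sigma,\tau)\cong\Ext^i_{\RR^\chi(M)}(\sigma,r_{\bar P}^G\tau),\qquad
\Ext^i_{\RR^\chi(G)}(\tau,i_P^G\sigma)\cong\Ext^i_{\RR^\chi(M)}(r_P^G\tau,\sigma),
\]
where $\RR^\chi(M)$ denotes the smooth $M$-representations on which $Z_G$ acts through $\chi$. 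Because $P$ is maximal, the group of unramified characters of $M$ trivial on $Z_G$ is a one-dimensional torus, so the block of the supercuspidal $\sigma$ in $\RR^\chi(M)$ is equivalent to the category of $\C[z,z^{-1}]$-modules; in particular $\Ext^i_{\RR^\chi(M)}(\sigma,N)=0$ for all $N$ and all $i\ge2$, $\Ext^1_{\RR^\chi(M)}(\sigma,\sigma)=\C$, and $\Ext^i_{\RR^\chi(M)}(\sigma,\sigma')=0$ for every supercuspidal $\sigma'\not\cong\sigma$.

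Two consequences are immediate. First, $\Ext^i_{\RR^\chi(G)}(\pi,\tau)=\Ext^i_{\RR^\chi(G)}(\tau,\pi)=0$ for all $\tau$ and all $i\ge2$; feeding this into the long exact sequences of $0\to\pi_1\to\pi\to\pi_2\to0$ and of the analogous sequence for $i_P^G(w\sigma)$ (or, when $\pi$ is semisimple, using the splitting directly), and invoking the finite global dimension of $\RR(G)$ (Bernstein), a short dévissage forces $\Ext^i_{\RR^\chi(G)}(\pi_a,\pi_b)=0$ for all $a,b$ and all $i\ge2$. Second, for the \emph{irreducible} case I would compute $r_{\bar P}^G\pi$ from the geometric lemma: it has length $\le2$, with composition factors in $\{\sigma,w\sigma\}$ where $w$ generates $W(M)=N_G(M)/M$, and a factor $w\sigma\not\cong\sigma$ lies in a different block, or at a different point of the torus, hence is invisible to $\Ext^\bullet_{\RR^\chi(M)}(\sigma,-)$; the remaining ``$\sigma$-part'' of $r_{\bar P}^G\pi$ is pinned down by $\Hom_M(\sigma,r_{\bar P}^G\pi)\cong\End_G(\pi)=\C$ (which, if $w\sigma\cong\sigma$, forces it to be the \emph{nonsplit} self-extension of $\sigma$), and the first displayed isomorphism then yields $\Ext^1_{\RR^\chi(G)}(\pi,\pi)\cong\Ext^1_{\RR^\chi(M)}(\sigma,\sigma)=\C$.

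For the \emph{reducible} case the decisive tool is the Euler--Poincar\'e pairing $\EP(\pi_a,\pi_b)=\sum_i(-1)^i\dim\Ext^i_{\RR^\chi(G)}(\pi_a,\pi_b)$. By the vanishing above and the fact that $\pi_1\not\cong\pi_2$ are irreducible, this equals $\delta_{ab}-\dim\Ext^1_{\RR^\chi(G)}(\pi_a,\pi_b)$. On the other hand $\EP(\pi_a,\pi_b)$ is the elliptic inner product $\langle\theta_{\pi_a},\theta_{\pi_b}\rangle_{\ellip}$ of Harish-Chandra characters; since the character of the properly induced $\pi=i_P^G\sigma$ vanishes on the regular elliptic set we have $\theta_{\pi_1}=-\theta_{\pi_2}$ there, whence $\EP(\pi_1,\pi_1)=\EP(\pi_2,\pi_2)=-\EP(\pi_1,\pi_2)=-\EP(\pi_2,\pi_1)$. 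Both $\pi_1$ and $\pi_2$ are elliptic --- in the tempered case they are the constituents cut out by a nontrivial $R$-group, and in the non-tempered case $\pi_1$ is a generalized Steinberg representation, which is elliptic, and then so is $\pi_2$ since $\theta_{\pi_2}=-\theta_{\pi_1}$ on the elliptic set --- so $\EP(\pi_a,\pi_a)=\langle\theta_{\pi_a},\theta_{\pi_a}\rangle_{\ellip}>0$. Combining with $\EP(\pi_a,\pi_a)=1-\dim\Ext^1(\pi_a,\pi_a)\le1$ and $\EP(\pi_a,\pi_a)=\dim\Ext^1(\pi_1,\pi_2)\ge0$ forces $\EP(\pi_a,\pi_a)=1$, i.e.\ $\Ext^1_{\RR^\chi(G)}(\pi_i,\pi_i)=0$ and $\Ext^1_{\RR^\chi(G)}(\pi_1,\pi_2)=\Ext^1_{\RR^\chi(G)}(\pi_2,\pi_1)=\C$.

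The main obstacles I anticipate lie in the reducible case: verifying that $\pi_1,\pi_2$ are genuinely elliptic and setting up $\EP=\langle\ ,\ \rangle_{\ellip}$ with exactly the integrality and positivity used here --- the reducible \emph{tempered} subcase is the delicate one, as there is no nonsplit standard module to fall back on. A secondary nuisance is that the geometric lemma records only composition factors of $r_{\bar P}^G i_P^G\sigma$ and of $r_{\bar P}^G\pi_i$, whereas at several points one needs the actual extension class; these refinements should follow from $\Hom$-computations via adjointness, as indicated above.
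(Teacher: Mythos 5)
Your proposal is correct in outline, and in the split (tempered) reducible case it is essentially the paper's argument: there the proof also combines the vanishing $\Ext^{\ge 2}=0$ (Proposition \ref{prop:ext-vanish}, with $d-|\Theta|=1$ for a maximal parabolic), the character formula $\EP=\langle\theta,\theta\rangle_{\ellip}$ of Proposition \ref{prop:EP}, Arthur's result that the constituents have nonvanishing character on the elliptic set, and $\EP(\pi_1,\pi)=0$ for the induced representation. You diverge in the other two cases. For irreducible $\pi$, the paper simply notes $\EP(\pi,\pi)=0$, $\Hom(\pi,\pi)=\C$ and $\Ext^{\ge2}=0$, whereas you pass through second adjointness and identify the $\sigma$-isotypic part of $r_{\bar P}^G\pi$ (forced to be the nonsplit self-extension of $\sigma$ when $\sigma^w\cong\sigma$, via $\Hom_M(\sigma,r_{\bar P}^G\pi)\cong\End_G(\pi)=\C$); both work, the paper's being shorter. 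For the indecomposable (nonsplit) reducible case the paper uses neither $\EP$ nor ellipticity: it gets the nonvanishing of $\Ext^1(\pi_1,\pi_2)$ from the second induced representation $i_P^G(\sigma^w)$, which is nonsplit by Proposition \ref{prop:inequivalent} because $\sigma$ is then non-unitary even after twisting, and then pins down dimensions with the long exact sequence of $0\to\pi_1\to\pi\to\pi_2\to0$ together with Frobenius reciprocity, $\Ext^1(\pi_1,\pi)\cong\Ext^1_{\RR^\chi(M)}(\sigma,\sigma)=\C$. Your $\EP$ route for that case needs the additional inputs that one constituent is essentially square-integrable (Casselman's criterion at a non-unitary reducibility point) and that its elliptic norm is positive (Kazhdan's orthogonality); this is true, but it is precisely the obstacle you flagged, and it is what the paper's more elementary homological treatment of that case avoids. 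Two smaller remarks: your dévissage for $\Ext^{\ge2}(\pi_a,\pi_b)=0$ is fine but is obtained in one stroke from Proposition \ref{prop:ext-vanish}; and the identification of the $\sigma$-block of $\RR^\chi(M)$ with $\C[z,z^{-1}]$-modules should be stated with care when $\sigma$ admits unramified self-twists, though the only facts you use (projectivity of $\sigma$ in $\RR^{\chi_\sigma}(M)$, $\dim\Ext^1_{\RR^\chi(M)}(\sigma,\sigma)=1$ since $Z(M)/Z(G)$ has split rank one, and vanishing of $\Ext$ against supercuspidals with a different cuspidal support) are exactly those the paper invokes.
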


\begin{rem}
The theorem is an extension of an observation that one of the authors
made concerning reducible unitary principal series representations of 
$\GSp_4(k)$ arising from the Klingen parabolic
(see \cite{bessel}*{Remark 11.2}),
prompting a similar question for $\SL_2(k)$, which on checking around we found was not known.

A similar statement is true for $(\g,K)$-modules for
representations $\pi_1,\pi_{-1}$ of $\SL_2(\R)$ of weights 
$1,-1$ respectively, as follows by looking 
at the complete list of indecomposable representations of $\SL_2(\R)$ supplied
by Howe-Tan, cf.\ \cite{howe-tan:sl2}*{Theorem II.1.1.13}.
\end{rem}

Our second result concerns the components of certain principal series representations
of $\SL_n(k)$.
Suppose $\omega \colon k^\times \longrightarrow \C^\times$ is a character of order $n$.
We assume that $\omega$ is either unramified or totally ramified,
in the sense that the restriction of $\omega$
to the group $\OO^\times$ of units in $k^\times$
is either trivial or has order $n$.
Let $\pi$ be the principal series representation
$\Ps(1,\omega,\cdots,\omega^{n-1})$ of $\GL_n(k)$, as well as its restriction
to $\SL_n(k)$ which is known to decompose into a direct sum of $n$ inequivalent,
irreducible, admissible representations of $\SL_n(k)$, permuted transitively by the action
of $\GL_n(k)$ on $\SL_n(k)$ by conjugation.
Embed $k^\times$ inside $\GL_n(k)$
as the group of upper left diagonal matrices with all other diagonal entries 1.
This $k^\times$ too acts transitively on the set of 
irreducible summands of the representation $\pi$ of $\SL_n(k)$;
call $\pi_1$ one of them.
Then the set of irreducible representations of $\SL_n(k)$
appearing in $\pi$ can be indexed as $\pi_e$ for $e$ belonging to $k^\times$,
in fact more precisely for $e$ belonging to $k^\times/\ker(\omega)$ since it is
known that elements of $k^\times$ belonging to $\ker(\omega)$ act 
trivially on $\pi_1$. 
For the statement of the next theorem, 
the choice of the base point representation $\pi_1$ plays no role, but 
this indexing of representations
occurring in $\pi$ through $k^\times/\ker(\omega)$ is important.

Let $S_\pi$ be the group of characters of $k^\times$ generated by $\omega$, i.e.,
$S_\pi = \{ 1,\omega,\cdots, \omega^{n-1}\}$. Then the character group $\widehat{S}_\pi$ of $S_\pi$
can be identified to $k^\times/{\ker (\omega)}$ via the natural pairing
\begin{align*}
S_\pi \times k^\times/{\ker (\omega)}  &\longrightarrow  \C^\times,   \\ 
(\chi,x) &\longmapsto \chi(x).
\end{align*} 
Let $X = \C[S_\pi]$ be the group ring
of $S_\pi$, and $Y = \C[S_\pi]^0$ be the augmentation ideal of $\C[S_\pi]$. Then $Y$, and hence
$\Lambda^i Y$, are representation spaces of $S_\pi$, and it makes sense to talk 
of $\Lambda^i Y [e]$, the $e$-th
isotypic component of $\Lambda^iY$, for $e$ a character of $S_\pi$, which as mentioned
earlier can be identified to $k^\times/\ker(\omega)$. 

\begin{thm}
\label{thm:sln}
With the notation as above, for $a,b \in k^\times/\ker(\omega)$, we have
$$
\Ext^r(\pi_a,\pi_b) \cong \Lambda^r Y [ba^{-1}].
$$
In particular, 
$$
\Ext^1(\pi_a,\pi_b) = \C \quad\text{if}\quad  a \not = b, \qquad\text{and}\qquad \Ext^1(\pi_a,\pi_a)=0.
$$
\end{thm}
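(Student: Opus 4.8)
\emph{The plan} is to reduce to a computation of $\Ext$-groups between simple modules over an explicit algebra, using the Bernstein decomposition and the known structure of principal-series blocks. By Lemma~\ref{lem:cuspidal-support} all the $\pi_a$ lie in a single Bernstein block $\mathcal B$ of $\RR(\SL_n(k))$, the one whose cuspidal support is the restriction to the diagonal torus $T_0$ of $\SL_n(k)$ of $\psi:=1\otimes\omega\otimes\cdots\otimes\omega^{n-1}$, so that $\Ext^r(\pi_a,\pi_b)=\Ext^r_{\mathcal B}(\pi_a,\pi_b)$. The block $\mathcal B$ is swept out by the induced representations $i_{B}^{\SL_n(k)}(\psi\nu)$ for $\nu$ ranging over the $(n-1)$-dimensional complex torus $\Psi$ of unramified characters of $T_0(k)$, and $\pi$ corresponds to the base point $x_0\in\Psi$ (the point $\nu=1$). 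The dichotomy in the hypothesis — $\omega$ unramified or totally ramified — is what makes $\mathcal B$ have a simple shape near $x_0$: the $R$-group of $\pi|_{\SL_n(k)}$ is the cyclic group $\Gamma$ of order $n$ generated by the long cycle $(1\,2\,\cdots\,n)\in S_n$, it fixes $x_0$, and, being generated by an $n$-cycle rather than by reflections, it contributes no quadratic (Hecke) relations. Consequently, after completing the centre of $\mathcal B$ at the central character of $\pi$, the block becomes Morita equivalent to the crossed product $\widehat\OO_{x_0}\rtimes\Gamma$, where $\widehat\OO_{x_0}$ is the completed local ring of $\Psi$ at $x_0$ and $\Gamma$ acts on the cotangent space $\m_{x_0}/\m_{x_0}^2$ by cyclically permuting coordinates; the crossed product is untwisted (the relevant $2$-cocycle on $\Gamma$ is trivial), which is just a restatement of the multiplicity-one in the hypothesis, and the cotangent representation of $\Gamma$, being the regular representation of $\Gamma\cong S_\pi$ with the trivial line removed, is canonically $Y=\C[S_\pi]^0$.

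Under this equivalence $\pi$ goes to the $\Gamma$-fixed point $x_0$ and the $n$ constituents $\pi_a$ go to the $n$ simple $\widehat\OO_{x_0}\rtimes\Gamma$-modules supported there, namely the one-dimensional modules $\C_{\chi}$ indexed by $\chi\in\widehat\Gamma$; the indexing of the $\pi_a$ by $k^\times/\ker(\omega)$ is set up precisely so that $\pi_a\leftrightarrow\C_{\chi_a}$ with $\chi_a$ the character of $\Gamma$ attached to $a$ under $\widehat\Gamma\cong\widehat{S}_\pi\cong k^\times/\ker(\omega)$. It then remains to compute $\Ext^r$ over the crossed product. Since $\Gamma$ is finite and we work over $\C$, the Grothendieck spectral sequence expressing $\Ext$ over $\widehat\OO_{x_0}\rtimes\Gamma$ in terms of $\Ext$ over $\widehat\OO_{x_0}$ and cohomology of $\Gamma$ collapses, giving
$$
\Ext^r_{\widehat\OO_{x_0}\rtimes\Gamma}(\C_{\chi_a},\C_{\chi_b})\;\cong\;\bigl(\Ext^r_{\widehat\OO_{x_0}}(\C_{x_0},\C_{x_0})\otimes\chi_b\chi_a^{-1}\bigr)^{\Gamma}.
$$
As $\widehat\OO_{x_0}$ is regular local of dimension $n-1$, the Koszul resolution of its residue field gives $\Ext^r_{\widehat\OO_{x_0}}(\C_{x_0},\C_{x_0})\cong\Lambda^r(\m_{x_0}/\m_{x_0}^2)\cong\Lambda^r Y$ as $\Gamma$-modules, and extracting the isotypic part indexed by $\chi_b\chi_a^{-1}$, i.e.\ by $ba^{-1}\in k^\times/\ker(\omega)$, yields $\Ext^r(\pi_a,\pi_b)\cong\Lambda^r Y[ba^{-1}]$. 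The case $r=1$ is immediate: $\Lambda^1 Y=Y$ is the augmentation ideal of $\C[S_\pi]$, in which every nontrivial character of $S_\pi$ occurs once and the trivial character not at all, so $Y[ba^{-1}]$ is $\C$ when $a\neq b$ and $0$ when $a=b$.

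\emph{The main obstacle} is establishing the structure of the block asserted in the first paragraph — that, up to Morita equivalence and completion, it is the \emph{untwisted} crossed product $\widehat\OO_{x_0}\rtimes\Gamma$ with $\Gamma$ cyclic of order $n$ acting on the cotangent space through the coordinate-permutation representation $Y$. This combines the determination of the $R$-group of these particular principal series of $\SL_n(k)$ (in the spirit of Gelbart--Knapp and Goldberg) with the Bushnell--Kutzko and Roche description of principal-series Bernstein blocks, plus the observation that an $R$-group generated by an honest $n$-cycle introduces no Hecke deformation; it is here that one uses the hypothesis on $\omega$. Given this, the remaining points — the bookkeeping identifying $\widehat\Gamma$ with $k^\times/\ker(\omega)$ compatibly with the chosen indexing of the $\pi_a$, and the homological computation over a crossed product by a finite group in characteristic $0$ — are routine. (An alternative, which also explains the role of $X=\C[S_\pi]$: compute $\Ext^\bullet_{\GL_n(k)}(\pi,\pi)\cong\Lambda^\bullet X$, together with its $S_\pi$-action by self-twists, and descend to $\SL_n(k)$ via the Hochschild--Serre spectral sequence of $1\to\SL_n(k)\to\GL_n(k)\to k^\times\to1$, using that $X\cong Y\oplus\C$ as $S_\pi$-modules.)
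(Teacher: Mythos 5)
Your plan follows the same arc as the paper's actual proof: place all the $\pi_a$ in a single Bernstein block, model that block on a crossed product of a commutative algebra by the stabilizer $\Gamma\cong\Z/n$ of the inducing character, match the constituents with the one-dimensional modules at the $\Gamma$-fixed point, and finish by a Koszul-type computation plus $\Gamma$-isotypic decomposition. Your endgame coincides with the paper's: there the block is modelled on $\C[A\rtimes\Z/n]$ with $A\cong\Z^{n-1}$, and one computes $H^i(A\rtimes\Z/n,\C_{1,\eta})\cong\Lambda^i(A^\vee\otimes\C)[\eta]$ (Lemma \ref{lem:cohom-normal} and its corollaries), which is exactly your invariants-of-Koszul step carried out as group cohomology, without completing. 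The packaging differs: you complete at the central character and assert one uniform Morita equivalence with an untwisted crossed product, whereas the paper argues case by case --- in the totally ramified case via the Goldberg--Roche type, whose Hecke algebra is identified with $\C[A\rtimes\Z/n]$ compatibly with the embedded $\Hecke(T,\chi)\cong\C[A]$ (Proposition \ref{prop:sln-hecke}); in the unramified case via the Iwahori--Hecke algebra, localized at the central element $\delta$ (after checking $\nu_\omega(\delta)\neq 0$) so that it becomes $R_\delta[S_n]$, followed by a Clifford-theory argument reducing Ext between the $(n-1)!$-dimensional modules to Ext between characters of $A\rtimes\Z/n$.

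The genuine weakness is that the step you yourself call ``the main obstacle'' is where essentially all of the paper's work lies, and your justification for it is too thin to count as a proof. (i) Untwistedness is not a restatement of multiplicity one, and $H^2(\Gamma,\C^\times)=0$ does not settle it: what must be shown is that the generator over the commutative part can be normalized so that conjugation permutes the elements $\Phi_a$ of the embedded $\C[A]$ exactly, not merely up to scalars, compatibly with $\lambda_B$; Goldberg--Roche give the relation only up to scalars, and the paper removes them in Proposition \ref{prop:sln-hecke} by using the multiplicity-free decomposition of $i_B^G(\chi)$ to produce a character of $\Hecke(G,\rho)$ trivial on all the $\Phi_a$. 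Your instinct (use multiplicity one) points at the right argument, but it is a proof obligation, not a tautology. (ii) In the unramified case, ``an $n$-cycle introduces no Hecke deformation'' must be verified: one needs that $\nu_\omega$ avoids the locus where the quadratic relations interfere (the paper's check $\nu_\omega(\delta)\neq 0$), and then the Morita/Clifford reduction from $S_n$ to the stabilizer $\Z/n$ has to be carried out, as in the paper's Iwahori--Hecke and unramified sections. (iii) The labelling is not a definition: the $\pi_a$ are indexed by the $k^\times$-conjugation action, and identifying that action with twisting by characters of $\Gamma$ on the Hecke side is proved in Proposition \ref{prop:compatibility}; your phrase ``the indexing is set up precisely so that $\pi_a\leftrightarrow\C_{\chi_a}$'' assumes this equivariance rather than establishing it (for $r=1$ the ambiguity is harmless, but the formula $\Lambda^rY[ba^{-1}]$ as written needs it). Finally, computing Ext through the completed algebra deserves a word of justification, which the paper sidesteps by working with $\C[A\rtimes\Z/n]$-modules directly, and your parenthetical $\GL_n$-descent alternative is not developed.
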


Of course, when $n=2$, this is 
a special case of Theorem \ref{thm:main},
and thus we have two different computations of extensions
of representations of $\SL_2(k)$.
Neither is trivial.
One uses Kazhdan's orthogonality criterion,
and the other uses
nontrivial statements about Hecke algebras.

We might add that most of the paper is devoted to the proof of the Theorem \ref{thm:sln}, which is divided
into two separate cases depending on whether the character $\omega$ is totally ramified, or is unramified. 
In both of these cases, the question about calculating the Ext groups is turned into one about 
modules over appropriate Hecke algebras, then
to modules over certain group algebras, and finally to questions about cohomology of groups.
Although in these two cases the Hecke algebras involved are quite different, at the end the questions
boil down to the same calculation about
$$
\Ext^1_{A\rtimes \Z/n}(\chi_1,\chi_2),
$$
where $A$ is the group $A = \set{(k_1, \cdots,k_n) \in \Z^n }{\sum k_i = 0}$, with the cyclic 
permutation action of $\Z/n$ on it, and $\chi_1,\chi_2$ are characters of $A\rtimes \Z/n$. 

Our Theorem \ref{thm:sln} for a very special class of principal series representations
of $\SL_n(k)$ begs for a formulation more generally; we offer a conjecture for $\SL_n(k)$:

\begin{conj}
Let $\pi$ be an irreducible unitary principal series representation of $\GL_n(k)$ induced from
a supercuspidal representation $\sigma$ of a Levi subgroup $M$.
Define,
$$
S_\pi= \{\mu \mid \pi \otimes \mu \cong \pi\},
$$
where $\mu$ ranges over the set of complex characters of $k^\times$
(considered as characters of $\GL_n(k)$ via the determinant map).
Similarly, define,
$$
S_\sigma=  \{\mu \mid \sigma \otimes \mu \cong \sigma\}.
$$
Clearly $S_\sigma \subset S_\pi$, and it is easy to see that 
$S_\pi/{S_\sigma}$ is a subgroup of the Weyl group
$W(\GL_n(k),M)= N_{\GL_n(k)}(M)/M$.
Let $Y$ be the character group 
of $SM= M \cap \SL_n(k)$ 
which is a module for
$W(\GL_n(k),M)$,
and in particular for $S_\pi/S_\sigma$.
Then characters of $S_\pi$ 
---parametrized just as before by a quotient, say $Q$, of $k^\times$ ---determine 
irreducible representations of $\SL_n(k)$ contained in $\pi$, whereas
characters of $S_{\pi}/{S_\sigma}$ determine irreducible representations on $\SL_n(k)$ contained 
in a principal series representation, say $\pi_0$, of $\SL_n(k)$ induced from an irreducible component, 
say $\sigma_0$,
of 
$\sigma$ restricted to $SM= M \cap \SL_n(k)$. 
For $a,b \in Q$, we conjecture that:
$$
\Ext^r(\pi_a,\pi_b) \cong  \Lambda^r Y [ba^{-1}].
$$
\end{conj}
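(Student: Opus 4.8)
The plan is to carry out, in this generality, the three-step reduction used to prove Theorem~\ref{thm:sln}: from representations to Hecke-algebra modules, then to modules over a group algebra, then to group cohomology. First, by Lemma~\ref{lem:cuspidal-support}, all of the $\pi_a$ lie in a single Bernstein block of $\SL_n(k)$ --- the one lying over the block $\mathfrak s=[M,\sigma]$ of $\GL_n(k)$ --- so $\Ext^r(\pi_a,\pi_b)$ may be computed there. By the theory of types (Bushnell--Kutzko, and its extensions to general Levi subgroups and to $\SL_n$), the block $\mathfrak s$ of $\GL_n(k)$ is equivalent to the category of modules over an affine Hecke algebra $\Hecke$, a tensor product of extended affine Hecke algebras of type $A$; under this equivalence $\pi=i_P^{\GL_n}\sigma$ corresponds to a simple module $V_\pi$. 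Descending from $\GL_n$ to $\SL_n$: since $\SL_n(k)\trianglelefteq\GL_n(k)$ with abelian quotient $k^\times$ (via $\det$), Clifford theory identifies the relevant $\SL_n$-block with modules over a crossed product $\Hecke\rtimes_\tau R$, where $R=S_\pi/S_\sigma$ is the (abelian) ``$R$-group'', acting on $\Hecke$ through its realization inside $W(\GL_n,M)$, and $\tau$ is an a priori $2$-cocycle on $R$. The multiplicity-one theorem for restriction from $\GL_n$ to $\SL_n$ (Gelbart--Knapp, Tadi\'c) forces $\tau$ to be a coboundary, so the algebra is $\Hecke\rtimes R$; the representations $\pi_a$ are then the simple constituents of $\Ind_{\Hecke}^{\Hecke\rtimes R}V_\pi$, which form a torsor under $\widehat R$, and this torsor is matched with the parametrization by $Q$ in the statement.

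With the category pinned down, the computation becomes local around the point of $\sigma$. Because $\pi$ is irreducible --- so, by the Bernstein--Zelevinsky criterion, the cuspidal data occurring in $\sigma$ are pairwise unlinked --- and unitary, $V_\pi$ is a simple module of ``tempered type'', and the homological algebra of affine Hecke algebras at such points (in the spirit of work of Opdam and Solleveld) should show that $\Ext^\bullet_{\Hecke}(V_\pi,V_\pi)$ is an exterior algebra $\Lambda^\bullet V$ on a space $V$ of toral directions, $R$-equivariantly, with $V\cong L^\ast\otimes_{\Z}\C\cong Y$ as $\C[R]$-modules, where $L=X_\ast\bigl((M\cap\SL_n)^{\mathrm{ab}}\bigr)$ is the cocharacter lattice governing the $\SL_n$-block. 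One is thereby reduced to computing, for characters $\chi_a,\chi_b$ of $\C[L]\rtimes R$ lying over the $R$-fixed character of $\C[L]$, the groups $\Ext^r_{\,\C[L]\rtimes R}(\chi_a,\chi_b)$ --- precisely the $\SL_n$-shaped analogue of the calculation the introduction reduces the $\SL_n(k)$ (and $\SL_2(k)$) case to; for the torus Levi this is the group algebra of the introduction's $A\rtimes\Z/n$, with $L=A$ and $R=\Z/n$. Since $R$ is finite and we are over $\C$, the Lyndon--Hochschild--Serre spectral sequence for $L\trianglelefteq L\rtimes R$ collapses and yields
$$
\Ext^r_{\,\C[L]\rtimes R}(\chi_a,\chi_b)\;=\;\bigl[\,\Ext^r_{\C[L]}(\C,\C)\otimes(\chi_b\chi_a^{-1})\,\bigr]^{R}\;=\;\bigl[\,\Lambda^r Y\otimes(\chi_b\chi_a^{-1})\,\bigr]^{R}\;=\;\Lambda^r Y\,[\,ba^{-1}\,],
$$
using $\Ext^\bullet_{\C[L]}(\C,\C)=\Lambda^\bullet(L^\ast\otimes\C)=\Lambda^\bullet Y$ (group cohomology of the free abelian group $L$) together with the elementary identity relating the $R$-invariants of a twisted module to an isotypic component; taking $r=1$ gives the stated dichotomy.

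I expect the genuine difficulty to lie in the structure-theoretic input rather than in the homological algebra. The first hard point is to have, for an arbitrary supercuspidal $\sigma$ of an arbitrary Levi $M$ of $\GL_n$, a sufficiently explicit description of the type and of $\Hecke$ \emph{together with} what happens on restriction to $\SL_n(k)$: the simple types for $\SL_n$, the precise realization of $R=S_\pi/S_\sigma$ inside $W(\GL_n,M)$, and the triviality of the Clifford cocycle $\tau$. The second is the $R$-equivariant computation of $\Ext^\bullet_{\Hecke}(V_\pi,V_\pi)$: one must check that irreducibility and unitarity of $\pi$ do force this to be an exterior algebra of the expected rank even when the point of $\sigma$ is not regular in the Bernstein torus (which can happen as soon as two of the cuspidal blocks of $\sigma$ coincide), and that the $R$-action on it is the one on $Y$. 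Finally --- and this is what fixes the exponent $ba^{-1}$ as opposed to its inverse --- one must pin down precisely what the ``character group of $SM=M\cap\SL_n(k)$'' is as a $\Z[W(\GL_n,M)]$-module and verify that it is canonically dual to the lattice $L$ above. Once these are in place, the argument should close exactly as for Theorem~\ref{thm:sln}.
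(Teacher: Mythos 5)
The statement you are attempting to prove is not proved in the paper at all: it is stated there as a conjecture, and the paper itself only establishes the special case of a torus Levi with $\sigma=(1,\omega,\dots,\omega^{n-1})$, $\omega$ of order $n$ and either unramified or totally ramified (Theorem \ref{thm:sln}). Your outline is faithful to the mechanism of that special case (types $\to$ Hecke modules $\to$ group-algebra modules $\to$ group cohomology), and to the paper's own remark that the general statement should follow from results of Opdam--Solleveld after nontrivial identifications; but as written it is a program, not a proof, and the places where you hedge are exactly where the real content lies.

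Concretely, three steps in your argument are asserted rather than established. First, the passage from the $\GL_n$-block to the $\SL_n$-block as modules over a crossed product $\Hecke\rtimes R$ with \emph{trivial} $2$-cocycle, for an arbitrary supercuspidal $\sigma$ on an arbitrary Levi $M$, is not something you can extract from the sources the paper uses: Goldberg--Roche give types and Hecke algebras for principal series of $\SL_n$ (torus Levi), and even there the paper must work (Proposition \ref{prop:sln-hecke}) to upgrade a projective relation $\Phi_w\Phi_a\Phi_w^{-1}\doteq\Phi_{w(a)}$ to an honest group-algebra isomorphism; the analogous normalization for general $M$ is precisely what needs proof, and multiplicity one for restriction to $\SL_n$ does not by itself identify the algebra. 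Second, your key homological input --- that $\Ext^\bullet_{\Hecke}(V_\pi,V_\pi)$ is, $R$-equivariantly, the exterior algebra $\Lambda^\bullet Y$ --- is flagged by you as ``should show''; this is the heart of the matter (the paper's proof of its special case gets this by reducing to $\C[A\rtimes \Z/n]$ or to $R_\delta[W]$ via an explicit localization, neither of which is available in general), and it is exactly the point at which non-regularity of the cuspidal datum can interfere. Third, the matching of the Hecke-side parametrization by $\widehat R$ with the parametrization of the $\pi_a$ by $Q\subset k^\times/\ker$, which fixes the isotypic component $[ba^{-1}]$ rather than $[ab^{-1}]$, is a genuine compatibility question: the paper devotes its final section (Proposition \ref{prop:compatibility}) to it even in the special case, and your sketch does not supply the corresponding argument. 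So the proposal is a sensible strategy, consistent with the paper's, but it does not close the conjecture.
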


\begin{rem}
We recall that in the $L$-packet of $\SL_n(k)$ determined by $\pi$, 
there is a further partitioning
depending on whether the representations belong to the same Bernstein component or not:
this is the difference between $S_\pi$ which determines the $L$-packet and $S_\pi/{S_\sigma}$ which determines 
the part of the $L$-packet in a given Bernstein component. The above conjecture includes the statement 
that unless $\pi_a$ and $\pi_b$ belong to the same Bernstein component, all the Ext groups are zero.
\end{rem}

\begin{rem}
Although we appeal to existing knowledge about structure of Hecke algebras to prove
Theorem \ref{thm:sln},
some details of the equivalence of the category of representations of $p$-adic groups
versus those of the Hecke algebra are necessary since to convert the problem about representations of
$p$-adic groups to one on Hecke modules,
we must know what are the corresponding objects on the Hecke algebra side.
It is possible 
sometimes to come up with the suggested objects on the Hecke algebra with 
pure thought---for example for
Theorem \ref{thm:sln} in the totally ramified case, these will be exactly those representations of the 
Hecke algebra which are of dimension 1,
and there are exactly $n$ of them corresponding to components of the principal series representation
$\Ps(1,\omega,\cdots,\omega^{n-1})$. We have however preferred to identify the modules of the Hecke algebra 
concretely, and in the process have tried to give an exposition on what goes into it for the 
benefit of some of the readers, as well as for the authors. 
\end{rem}

\begin{ack}
We thank the National Science Foundation for financial support (DMS-0854844),
and the Department of Mathematics and Statistics at American University
for its hospitality.
We thank Alan Roche for many helpful conversations and correspondence on Hecke algebras,
and specially on clarifications on his own work 
with David Goldberg on `types' for
principal series representations of $\SL_n$, in particular for the proof of Proposition \ref{prop:sln-hecke}.
We thank
Gordan Savin for the construction in Section~\ref{sec:savin}
of a nontrivial extension of representations of $\SL_2(k)$.
Finally, we thank an anonymous referee for helpful comments.

After the first draft of this paper was written, we saw the recent preprint
of Opdam and Solleveld \cite{opdam-solleveld:extensions}.
Our results should emerge as special cases of theirs
once appropriate identifications are made,
a process that would require some work.
However, our proofs are quite different from theirs. The authors thank Opdam and
Solleveld for their comments in this regard.
\end{ack}

\section{Preliminary results for Theorem \ref{thm:main}}

Given a connected reductive $k$-group $G$, and
two admissible, finite-length
representations $\pi$ and $\pi'$
of $G$ having a given central character, 
one can consider
the \emph{Euler-Poincare pairing}
between $\pi$ and $\pi'$,
which is denoted $\EP(\pi,\pi')$, 
and defined by
$$
\EP(\pi,\pi') = \sum _i (-1)^i\dim_\C \Ext^i(\pi,\pi').
$$
Here, each $\Ext^i(\pi,\pi')$ is a finite-dimensional vector space over $\C$,
and is zero when $i$ is greater than the $k$-split rank of $G/Z(G)$.
The notion of the Euler-Poincare pairing and its usefulness in the context of $p$-adic groups,
especially property (\ref{item:EP-chars}) 
of Proposition \ref{prop:EP} below,
was noted by Kazhdan in
\cite{kazhdan:cuspidal}.
One can find a proof by Schneider and Stuhler
\cite{schneider-stuhler:sheaves} in characteristic zero for
property (\ref{item:EP-chars}); this remains still unresolved
in positive characteristic as the convergence of the integral involved is not known in that case.

\begin{prop}
\label{prop:EP}
Let $\pi$ and $\pi'$ be finite-length, smooth
representations of a
reductive
$p$-adic
group $G$.
Then:
\begin{enumerate}[(a)]
\item
\label{item:EP-bilinear}
$\EP$ is a symmetric, $\Z$-bilinear form
on the Grothendieck group of finite-length
representations of $G$.
\item
\label{item:EP-loc-const}
$\EP$ is locally constant.
(A family $\{\pi_\lambda\}$ of representations on a fixed vector space $V$ 
is said to \emph{vary continuously}
if all $\pi_\lambda|_K$ are all equivalent for some compact open subgroup $K$,
and the matrix coefficients
$\langle \pi_\lambda v, \tilde v\rangle$
vary continuously in $\lambda$.)

\item
\label{item:EP-ps}
$\EP(\pi,\pi') = 0$
if $\pi$ or $\pi'$ is
induced from any proper parabolic subgroup in $G$.

\item
\label{item:EP-chars}
$\EP(\pi,\pi') = 
\int_{C_{\ellip}} \Theta(c)\bar{\Theta}'(c)\, dc$,
where $\Theta$ and $\Theta'$ are the characters of $\pi$ and $\pi'$ assumed to have the same
unitary central character, 
and $dc$ is a natural measure on the set
$C_{\ellip}$ of regular elliptic conjugacy classes in $G/Z(G)$. 
\end{enumerate}
\end{prop}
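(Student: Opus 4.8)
\emph{Sketch of proof.}
The whole statement rests on a single deep input---the Bruhat--Tits building resolution of Schneider and Stuhler together with Kazhdan's elliptic-orthogonality formula \cite{schneider-stuhler:sheaves,kazhdan:cuspidal}---and, granting it, the remaining parts follow by soft arguments (for part (\ref{item:EP-ps}), together with the classical vanishing of induced characters on the elliptic set). For admissible $\pi$ one has a resolution $0\to C_d\to\cdots\to C_1\to C_0\to\pi\to 0$, where $d$ is the $k$-split rank of $G/Z(G)$ and each $C_i$ is a finite direct sum of modules $c\text{-}\Ind_{J_\sigma}^G(\pi^{U_\sigma})$, indexed by facets $\sigma$ of the building, with $J_\sigma$ a parahoric-type compact open subgroup and $U_\sigma$ an attached pro-unipotent subgroup (the level being chosen so that $\pi$ is generated by its $U$-fixed vectors). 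These $C_i$ are projective in $\RR(G)$, and by Frobenius reciprocity $\Hom_G(C_i,\pi')$ is a finite sum of spaces $\Hom_{J_\sigma}(\pi^{U_\sigma},\pi')=\Hom_{\overline{J}_\sigma}(\pi^{U_\sigma},(\pi')^{U_\sigma})$ with $\overline{J}_\sigma=J_\sigma/U_\sigma$ finite, hence finite-dimensional by admissibility of $\pi'$. So each $\Ext^i(\pi,\pi')$ is finite-dimensional, vanishes for $i>d$, and is computed by the bounded complex $\Hom_G(C_\bullet,\pi')$. The long exact $\Ext$-sequences attached to a short exact sequence (plus the vanishing of the alternating sum of dimensions along a bounded exact sequence of finite-dimensional spaces) give additivity of $\EP$ in each variable, which is (\ref{item:EP-bilinear}) apart from symmetry. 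For symmetry, take the level large enough to resolve $\pi$ and $\pi'$ simultaneously over a common index set of facets; then in each degree $\Hom_G(C_i(\pi),\pi')$ and $\Hom_G(C_i(\pi'),\pi)$ are sums over the $\sigma$ of $\Hom_{\overline{J}_\sigma}(\pi^{U_\sigma},(\pi')^{U_\sigma})$ and $\Hom_{\overline{J}_\sigma}((\pi')^{U_\sigma},\pi^{U_\sigma})$, which have equal dimension since $\dim\Hom$ between finite-dimensional representations of a finite group is symmetric; so the two complexes have equal Euler characteristic, i.e.\ $\EP(\pi,\pi')=\EP(\pi',\pi)$. (Alternatively symmetry is immediate from (\ref{item:EP-chars}): $\EP\in\Z$ is real, while the right-hand integral for $(\pi',\pi)$ is the complex conjugate of the one for $(\pi,\pi')$.)

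For (\ref{item:EP-loc-const}): in a family $\{\pi_\lambda\}$ with all $\pi_\lambda|_K$ equivalent, fix the level small enough that the relevant $U_\sigma$ lie in $K$. Then each $C_i(\pi_\lambda)$ is determined by the $J_\sigma$-representations $\pi_\lambda^{U_\sigma}$, i.e.\ by $\pi_\lambda|_{J_\sigma}$; as a smooth representation of the compact group $J_\sigma$ this is a discrete invariant varying continuously (matrix coefficients do), hence constant along the family. So $\dim\Hom_G(C_i(\pi_\lambda),\pi')$, and therefore $\EP(\pi_\lambda,\pi')$, is independent of $\lambda$; running the argument with $\pi'$ varying and invoking the symmetry in (\ref{item:EP-bilinear}) handles the other slot. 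Thus $\EP$ is locally constant, indeed constant on each such family.

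For (\ref{item:EP-ps}): by bilinearity it suffices to treat $\pi=i_Q^G\tau$ with $Q=LN_Q$ a proper $k$-parabolic, the case of $\pi'$ being symmetric. The Harish-Chandra character of a parabolically induced representation vanishes on the regular elliptic set, since a regular elliptic conjugacy class of $G$ meets no proper Levi subgroup; hence the integrand in (\ref{item:EP-chars}) is identically zero on $C_{\ellip}$, and (\ref{item:EP-chars}) gives $\EP(\pi,\pi')=0$. Finally, (\ref{item:EP-chars}) itself is the theorem of Schneider--Stuhler, proved via the complex $C_\bullet$ above---the Euler characteristic $\sum_i(-1)^i\dim\Hom_G(C_i,\pi')$ is reorganized, using the $G$-action on the building and an orthogonality computation in the spirit of Kazhdan, into the elliptic integral---and I would simply cite it. This is the one substantial ingredient; it is also the only place where $k$ having characteristic $0$ is used, namely to guarantee convergence of the elliptic integral, and is the natural candidate for the main obstacle, everything else being formal bookkeeping with the resolution.
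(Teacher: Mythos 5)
The paper gives no proof of this proposition at all: it is presented as a package of known results, with part (\ref{item:EP-chars}) attributed to Kazhdan \cite{kazhdan:cuspidal} and its proof in characteristic zero to Schneider--Stuhler \cite{schneider-stuhler:sheaves}. Your sketch correctly isolates the same engine---the Schneider--Stuhler resolution by compactly induced modules from parahoric-type subgroups---and your derivations of (\ref{item:EP-bilinear}) and (\ref{item:EP-loc-const}) from it (additivity via long exact sequences of finite-dimensional spaces, symmetry and local constancy via degreewise dimension counts over the finite quotients $J_\sigma/U_\sigma$) are the standard ones from the cited sources, so in substance you are on the same route as the paper. The one genuine divergence is part (\ref{item:EP-ps}): you deduce it from (\ref{item:EP-chars}) together with the vanishing of induced characters on the regular elliptic set. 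This is logically valid in characteristic zero, but it makes (\ref{item:EP-ps}) hostage to the deep analytic input, whereas the usual argument is independent of it: by second adjointness and exactness of parabolic induction, $\Ext^i_G(i_P^G\tau,\pi')\cong\Ext^i_M(\tau,r_{N^-}\pi')$, and the Euler characteristic over $M$ in the category with fixed central character of $G$ vanishes because $Z(M)/Z(G)$ contributes a free abelian group of positive rank, whose cohomology has zero Euler characteristic (this is also what makes (\ref{item:EP-ps}) consistent with Proposition \ref{prop:ext-vanish}). Nothing you wrote is wrong; the substantial content is correctly identified and cited, and your choice merely trades independence from (\ref{item:EP-chars}) for brevity.
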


The Euler-Poincare pairing becomes especially useful because of the following two results,
concerning vanishing of higher $\Ext$ groups and Frobenius reciprocity for $\Ext$.

\begin{prop}
\label{prop:ext-vanish}
Suppose that $V$ in $\RR^\chi(G)$ has finite length, and that all of its
irreducible subquotients are subquotients of representations induced from supercuspidal representations
of a Levi factor of the standard parabolic subgroup $P$ of $G$,
defined by a subset $\Theta$ of the set of simple roots.
Then
$\Ext^i_{\RR^\chi(G)}(V,V') = 0$
for $i> d - |\Theta|$
and any representation $V'$ in $\RR^\chi(G)$ where $d$ is the $k$-split rank of $G/Z(G)$.
\end{prop}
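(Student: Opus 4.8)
The plan is to reduce the bound to a statement about supercuspidal representations of the Levi $M:=M_\Theta$, and then to argue by contradiction by looking at an irreducible representation of largest possible projective dimension. Since any finite-length $V$ as in the hypothesis carries a finite filtration whose subquotients are irreducible representations of $G$ with cuspidal support of the form $[M,\sigma]$ for some supercuspidal $\sigma$ of $M$, the long exact sequence for $\Ext^\bullet_{\RR^\chi(G)}(-,V')$ reduces everything to showing that each such irreducible $V$ has projective dimension at most $d-|\Theta|$ in $\RR^\chi(G)$; at any rate these projective dimensions are finite, bounded by $d$ (see the discussion preceding Proposition \ref{prop:EP}). Put $P=P_\Theta$ and let $m$ be the largest projective dimension occurring among these irreducibles; the goal is $m\le d-|\Theta|$, so assume for contradiction that $m>d-|\Theta|$.

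The first ingredient is that every induced object $i_P^G\tau$, with $\tau$ an irreducible supercuspidal representation of $M$, already has projective dimension at most $d-|\Theta|$. The functor $i_P^G$ is exact and admits, by Bernstein's second adjointness, the exact Jacquet functor $r_{\bar P}^G$ relative to the opposite parabolic $\bar P$ as a right adjoint; since $i_P^G$ is exact, $r_{\bar P}^G$ preserves injectives, and as $r_{\bar P}^G$ is also exact the adjunction extends to higher $\Ext$, giving $\Ext^i_{\RR^\chi(G)}(i_P^G\tau,W)\cong\Ext^i_{\RR^\chi(M)}(\tau,r_{\bar P}^G W)$ for all $W\in\RR^\chi(G)$. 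It then suffices to know $\Ext^i_{\RR^\chi(M)}(\tau,-)=0$ for $i>d-|\Theta|$, and for this I would invoke Bernstein's structure theory: $\Ext$-groups vanish between distinct Bernstein components of $M$, and the Bernstein component of the supercuspidal $\tau$ inside $\RR^\chi(M)$ is equivalent to the category of modules over a commutative Laurent polynomial ring (possibly twisted harmlessly by a $2$-cocycle) in $\dim A_M-\dim A_G=d-|\Theta|$ variables, hence of global dimension $d-|\Theta|$.

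Now the contradiction. Choose an irreducible $V$ with $\mathrm{pd}(V)=m$ and cuspidal support $[M,\sigma]$, together with some $V'\in\RR^\chi(G)$ with $\Ext^m_{\RR^\chi(G)}(V,V')\ne 0$. Because the cuspidal support of $V$ lies in $M$, the Jacquet module $r_P^G V$ is nonzero and all of its composition factors are supercuspidal representations of $M$; picking an irreducible quotient $\tau$ of $r_P^G V$, Frobenius reciprocity yields an embedding $V\hookrightarrow i_P^G\tau$. Let $C$ be the cokernel: a finite-length representation whose composition factors are again irreducible with cuspidal support on $M$, and therefore have projective dimension at most $m$. Applying $\Ext^\bullet_{\RR^\chi(G)}(-,V')$ to $0\to V\to i_P^G\tau\to C\to 0$, and using that $\Ext^m_{\RR^\chi(G)}(i_P^G\tau,V')=\Ext^{m+1}_{\RR^\chi(G)}(i_P^G\tau,V')=0$ (both degrees exceed $d-|\Theta|$), the long exact sequence gives $\Ext^m_{\RR^\chi(G)}(V,V')\cong\Ext^{m+1}_{\RR^\chi(G)}(C,V')$. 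But the right-hand side vanishes because $\mathrm{pd}(C)\le m$. This contradicts the choice of $V$ and $V'$, so $m\le d-|\Theta|$, which is the assertion.

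I expect the only genuinely non-formal point to be the identification used above: that the Bernstein component of a supercuspidal $\tau$ of $M$, viewed inside $\RR^\chi(M)$ — where only the central character on $Z(G)\subseteq Z(M)$ has been pinned down, not that on all of $Z(M)$ — is governed by a (twisted) Laurent polynomial ring in $d-|\Theta|$ variables. This rests on Bernstein's description of supercuspidal blocks together with the rank count $\dim A_M-\dim A_G=d-|\Theta|$; everything else (Frobenius reciprocity and second adjointness and their compatibility with $\Ext$, the Euler-Poincare vanishing of Proposition \ref{prop:EP}, and dimension shifting) is formal. When $P$ is maximal, so that $d-|\Theta|=1$, the same scheme gives the vanishing of $\Ext^{\ge 2}$ that is what the proof of Theorem \ref{thm:main} requires.
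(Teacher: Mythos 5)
Your argument is genuinely different from what the paper does: the paper disposes of this proposition in one line, by quoting Schneider--Stuhler \cite{schneider-stuhler:sheaves}*{Corollary III.3.3}, whereas you give an actual homological argument. Your scheme is sound: the derived second adjointness $\Ext^i_{\RR^\chi(G)}(i_P^G\tau,W)\cong\Ext^i_{\RR^\chi(M)}(\tau,r_{\bar P}^G W)$ is correct as you set it up (exact left adjoint, hence the right adjoint $r_{\bar P}^G$ preserves injectives, and its exactness makes the image of an injective resolution again a resolution); the rank count $\dim A_M-\dim A_G=d-|\Theta|$ is right; the fact that every irreducible subquotient of $r_P^G V$ is supercuspidal when the cuspidal support of $V$ is carried by $M$ is standard; and the dimension-shifting step via $0\to V\to i_P^G\tau\to C\to 0$ correctly transfers the problem to $\Ext^{m+1}(C,V')$, whose vanishing follows from the maximality of $m$. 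What your route buys is a proof from Bernstein-theoretic inputs (second adjointness, structure of supercuspidal blocks) rather than from the sheaf-theoretic resolutions on the building that underlie the cited corollary; what the paper's route buys is brevity and a bound already stated for arbitrary $V'$.

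Two points need firmer footing than you give them. First, the existence of the maximum $m$ requires knowing in advance that the relevant irreducibles have \emph{finite} projective dimension in $\RR^\chi(G)$, i.e.\ vanishing of $\Ext^i(V,V')$ for $i\gg 0$ against \emph{arbitrary} smooth $V'$; the discussion preceding Proposition \ref{prop:EP} that you invoke only asserts vanishing when $V'$ also has finite length, so it does not supply this. The finiteness of the cohomological dimension of $\RR^\chi(G)$ is true and standard (Bernstein; it is also part of the Schneider--Stuhler machinery), but you must cite it as such --- and note that you are then leaning on the same circle of results the paper quotes, albeit only for the crude bound and not for the refined one $d-|\Theta|$ that you are proving. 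Second, the assertion that the block of $\tau$ inside $\RR^\chi(M)$ has global dimension $d-|\Theta|$ deserves a precise reference or proof: the cleanest version is the one the paper itself uses in the proof of Theorem \ref{thm:main}, namely that a supercuspidal is projective in the category with fixed central character on all of $Z(M)$, after which the remaining parameters form a lattice of rank $d-|\Theta|$ and the vanishing reduces to the cohomology of $\Z^{d-|\Theta|}$; the block is Morita equivalent to a crossed product of a Laurent ring of that rank with a finite group (not literally a Laurent ring), which is harmless for the global-dimension count but should be stated accurately. With those two citations/repairs made, your proof is complete.
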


\begin{proof}
This is \cite{schneider-stuhler:sheaves}*{Corollary III.3.3}.
\end{proof}

\begin{prop}[Frobenius reciprocity]
\label{prop:frobenius}
Let $P$ be a parabolic subgroup of $G$ with Levi factorization $P=MN$.
Let $\pi$ be a smooth representation of $G$, and $\sigma$
a smooth representation of $M$.
Then
$$
\Ext_{\RR^\chi(G)}^i (\pi,i_P^G(\sigma)) \cong \Ext_{\RR^\chi(M)}^i(r_N (\pi),\sigma),
$$
where $\RR^\chi(M)$ is the category of smooth representations of $M$
on which the center
of $G$ (which is always contained in $M$) acts via $\chi$,
and $r_N$ denotes the Jacquet functor.
\end{prop}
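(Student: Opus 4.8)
The plan is to deduce the statement from the classical (un-derived) Frobenius reciprocity together with the exactness of normalized parabolic induction $i_P^G$ and of the normalized Jacquet functor $r_N$; everything else is formal homological algebra, and this kind of derived Frobenius reciprocity is well known. First I would check that the two normalized functors respect the central-character conditions: $i_P^G$ sends $\RR^\chi(M)$ into $\RR^\chi(G)$ and $r_N$ sends $\RR^\chi(G)$ into $\RR^\chi(M)$, because $Z(G)\subseteq M$ centralizes $N$, so $\delta_P$ is trivial on $Z(G)$ and the normalizing factors $\delta_P^{\pm 1/2}$ do not affect how $Z(G)$ acts. I would also record that each of $\RR^\chi(G)$ and $\RR^\chi(M)$ is an abelian category, closed under passage to subquotients inside $\RR(G)$ resp.\ $\RR(M)$, which has enough injectives --- it is the category of nondegenerate modules over an idempotented Hecke algebra with central character, hence a Grothendieck category --- so that $\Ext^i$ in $\RR^\chi(G)$ is the right-derived functor of $\Hom_{\RR^\chi(G)}(-,-)$ computed \emph{inside} this category. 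Finally, the normalized Frobenius reciprocity of Casselman and of Bernstein--Zelevinsky gives a natural isomorphism of functors on $\RR^\chi(G)^{\mathrm{op}}$,
$$
\Hom_{\RR^\chi(G)}\bigl(-,\, i_P^G\sigma\bigr)\ \cong\ \Hom_{\RR^\chi(M)}\bigl(r_N(-),\, \sigma\bigr);
$$
in particular $r_N$ is left adjoint to $i_P^G$.

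The key step is to show that $i_P^G$ carries injective objects of $\RR^\chi(M)$ to injective objects of $\RR^\chi(G)$. This is formal: if $I$ is injective in $\RR^\chi(M)$, then $X\mapsto \Hom_{\RR^\chi(G)}(X,\, i_P^G I)\cong \Hom_{\RR^\chi(M)}(r_N X,\, I)$ is the composite of the exact functor $r_N$ with the exact functor $\Hom_{\RR^\chi(M)}(-,\, I)$, hence exact, so $i_P^G I$ is injective. (Equivalently: $i_P^G$ is a right adjoint of the exact functor $r_N$, and right adjoints of exact functors preserve injectives.) Choosing an injective resolution $\sigma\hookrightarrow I^\bullet$ in $\RR^\chi(M)$ and using the exactness of $i_P^G$, the complex $i_P^G\sigma\hookrightarrow i_P^G I^\bullet$ is still exact, and by the previous remark it is an injective resolution of $i_P^G\sigma$ in $\RR^\chi(G)$.

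From here the computation is immediate:
$$
\Ext^i_{\RR^\chi(G)}(\pi,\, i_P^G\sigma)\ =\ H^i\bigl(\Hom_{\RR^\chi(G)}(\pi,\, i_P^G I^\bullet)\bigr)\ \cong\ H^i\bigl(\Hom_{\RR^\chi(M)}(r_N\pi,\, I^\bullet)\bigr)\ =\ \Ext^i_{\RR^\chi(M)}(r_N\pi,\, \sigma),
$$
the middle isomorphism being the isomorphism of complexes induced by Frobenius reciprocity, and naturality making the whole chain functorial. I do not expect a serious obstacle: the only genuine inputs are the exactness of $i_P^G$ and $r_N$ and the underived adjunction, all standard in the theory of smooth representations; the points needing a little care are that the normalized functors preserve the central-character subcategories (handled above) and that $\Ext$ is taken inside $\RR^\chi$ rather than $\RR$ --- which does matter, since $\RR^\chi(G)$ is \emph{not} closed under extensions in $\RR(G)$ --- but this causes no trouble precisely because $\RR^\chi(G)$ has enough injectives in its own right. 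One could instead resolve $\pi$ by projectives (using that $\RR^\chi(G)$ has enough projectives, e.g.\ compact inductions from open subgroups containing $Z(G)$, and that $r_N$, as a left adjoint of the exact functor $i_P^G$, preserves projectives), but the argument via injectives is the cleanest.
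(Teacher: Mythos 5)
Your argument is correct. Note, however, that the paper does not actually prove this proposition: it simply cites \cite{casselman:frobenius}*{Theorem A.12}, so what you have written is in effect a self-contained proof of the cited result rather than an alternative to an argument in the paper. Your route is the standard one and all the ingredients check out: $\delta_P$ is trivial on $Z(G)$ (central elements act trivially on $N$ by conjugation), so both normalized functors preserve the central-character subcategories; $r_N$ is left adjoint to $i_P^G$ with both functors exact, so $i_P^G$ preserves injectives (equivalently $r_N$ preserves projectives); and $\RR^\chi(M)$, being the category of nondegenerate modules over the $\chi$-twisted Hecke algebra of compactly-supported-mod-center functions, has enough injectives (and projectives), so $\Ext$ computed inside $\RR^\chi$ is a genuine derived functor there. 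You were also right to flag that $\RR^\chi$ is not extension-closed in $\RR$, which is exactly why one must resolve inside the central-character category --- this is the same framework as Casselman's appendix, which works with the fixed-central-character category and argues via preservation of projectives/injectives under the adjunction. So your proof buys a reader a complete argument where the paper only gives a pointer, and it matches the expected proof of the cited theorem.
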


\begin{proof}
This is \cite{casselman:frobenius}*{Theorem A.12}.
\end{proof}

\begin{prop}
\label{prop:inequivalent}
Let $G$ be a reductive $p$-adic group over $k$, and $P$ a maximal $k$-parabolic
subgroup of $G$ with Levi decomposition $P=MN$.
Let $\sigma$ be an irreducible,
supercuspidal representation
of $M$,
and let
$\pi=i_{P}^{G}\sigma$. If $N_G(M)/M$ is nontrivial, it is of order $2$, 
in which case write $N_G(M)/M = \langle w  \rangle$. 
\begin{enumerate}

\item
If $N_G(M)/M$ is trivial, $\pi=i_{P}^{G}\sigma$ is irreducible. 

\item
If $N_G(M)/M = \langle w \rangle$, and 
$ \sigma \not \cong \sigma^w$, then if $\pi$ is reducible, it is indecomposable 
with distinct Jordan-H\"older factors.

\item
If $ \sigma \cong \sigma^w$, then by twisting $\pi$ by a character of $G$, 
we can assume $\sigma$ to be unitary, hence if $\pi$ is reducible, it is completely reducible, and is
a direct sum of two distinct irreducible subrepresentations.
\end{enumerate}
\end{prop}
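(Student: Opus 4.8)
The plan is to combine standard facts about parabolic induction from supercuspidal data (essentially Bernstein--Zelevinsky and Casselman) with the structure of the ring $\End_G(\pi)$. First I would recall that for $\pi = i_P^G\sigma$ with $\sigma$ irreducible supercuspidal, $\pi$ has finite length and the Jacquet module $r_N(\pi)$ is glued from the $\sigma^w$ for $w \in N_G(M)/M$; by the theory of the Bernstein center, the only irreducible representations that can appear as subquotients of $\pi$ are those in the cuspidal support $[M,\sigma]$, and the multiplicity of each in $\pi$ equals its multiplicity in $r_N(i_P^G\sigma)$. For part (1), when $N_G(M)/M$ is trivial, Frobenius reciprocity gives $\Hom_G(\pi,\pi) \cong \Hom_M(r_N(\pi),\sigma)$, and since $r_N i_P^G \sigma$ has $\sigma$ with multiplicity one (no nontrivial Weyl element to produce $\sigma^w \cong \sigma$), $\End_G(\pi) = \C$; as $\pi$ is a quotient of the projective-in-its-block object it is then forced to be irreducible. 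Alternatively, one uses the fact that $i_P^G\sigma$ is always semisimple when $\sigma^w \not\cong \sigma$ for all $w\neq 1$, and with $N_G(M)/M$ trivial irreducibility is the Bernstein--Zelevinsky irreducibility criterion.

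For part (2), assume $N_G(M)/M = \langle w\rangle$ of order $2$ and $\sigma\not\cong\sigma^w$. The key computation is again $\End_G(\pi) \cong \Hom_M(r_N(\pi),\sigma)$. The Jacquet module $r_N(\pi)$ has a filtration with graded pieces $\sigma$ and $\sigma^w$, each with multiplicity one; since $\sigma \not\cong \sigma^w$, the $\sigma$-isotypic contribution is one-dimensional, so $\End_G(\pi) = \C$. A ring with no nontrivial idempotents forces $\pi$ to be indecomposable. That the Jordan--H\"older factors are distinct follows because each irreducible subquotient $\tau$ of $\pi$ satisfies $\dim\Hom_M(r_N(\tau),\sigma) + \dim\Hom_M(r_N(\tau),\sigma^w) \geq 1$ with total $\sum_\tau(\cdots) = 2$ summing the multiplicities, and a repeated factor would make $\End_G(\pi)$ bigger than $\C$ (or would contradict the length-$2$ bound coming from $r_N$); so if $\pi$ is reducible it has exactly two inequivalent Jordan--H\"older constituents.

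For part (3), suppose $\sigma \cong \sigma^w$. After twisting by a character of $G$ (which does not change any of the structure in question) we may assume $\sigma$, hence $\pi$, is unitary; unitarity forces complete reducibility. Now $r_N(\pi)$ has both graded pieces isomorphic to $\sigma$, so $\End_G(\pi) \cong \Hom_M(r_N(\pi),\sigma)$ is two-dimensional. A two-dimensional semisimple $\C$-algebra that is the endomorphism ring of a semisimple module is $\C\times\C$, so $\pi = \pi_1 \oplus \pi_2$ with $\pi_1 \not\cong \pi_2$ irreducible; one still has to rule out $\pi$ being a single irreducible with two-dimensional $\End$, which is impossible over $\C$ by Schur's lemma for admissible representations, and rule out $\pi_1\cong\pi_2$, which would make $\End_G(\pi)$ the noncommutative algebra $M_2(\C)$, contradicting its dimension $2$. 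I expect the main obstacle to be bookkeeping the precise structure of $r_N(i_P^G\sigma)$ and the multiplicity-one statements cleanly enough that the dimension counts for $\End_G(\pi)$ are unambiguous in all three cases; once $\dim\End_G(\pi)$ is pinned down, each conclusion is a short algebraic consequence together with the unitarity input in case (3).
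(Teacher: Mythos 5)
Your parts (2) and (3) run along essentially the same lines as the paper's proof: the geometric lemma gives $r_N(\pi)$ with constituents $\sigma$ and $\sigma^w$, Frobenius reciprocity identifies $\End_G(\pi)$ with $\Hom_M(r_N(\pi),\sigma)$, and the dimension count (one in case (2), two in case (3)) yields indecomposability, respectively the splitting into two inequivalent summands. Two points in (3) still need justification. First, that a twist by a character of $G$ makes $\sigma$ unitary is itself part of the assertion and uses $\sigma\cong\sigma^w$ (the paper notes that $\sigma$ is then already unitary on $M\cap[G,G]$, because $w$ acts by inversion on the relevant rank-one split central torus). Second, the step ``both Jacquet constituents are $\sigma$, hence $\Hom_M(r_N(\pi),\sigma)$ is two-dimensional'' presupposes that $r_N(\pi)$ is split; a nonsplit self-extension of $\sigma$ would give a one-dimensional Hom. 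So you should first use unitarity together with the reducibility hypothesis to write $\pi=\pi_1\oplus\pi_2$ with $r_N(\pi_i)\cong\sigma$ for each $i$, and only then compute $\End_G(\pi)$ and rule out $\pi_1\cong\pi_2$ (which would force $\End_G(\pi)\cong M_2(\C)$).

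Part (1) is where there is a genuine gap. From $\End_G(\pi)=\C$ you cannot conclude irreducibility: a reducible indecomposable representation with inequivalent Jordan--H\"older factors also has only scalar endomorphisms, and part (2) of this very proposition produces such $\pi$; the remark about $\pi$ being ``a quotient of the projective-in-its-block object'' does not repair this. Your fallback, that $i_P^G\sigma$ is always semisimple when $\sigma^w\not\cong\sigma$ for all $w\neq 1$, is false: for $\GL_2(k)$ with $\sigma=|\cdot|^{1/2}\otimes|\cdot|^{-1/2}$ on the diagonal torus (a maximal parabolic), $\sigma^w\not\cong\sigma$, yet the induced representation is a nonsplit extension of a character by a twist of Steinberg --- precisely the phenomenon of part (2). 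And in the situation of part (1), where $N_G(M)/M$ is trivial and the regularity hypothesis is vacuous, asserting semisimplicity amounts to assuming the irreducibility you are trying to prove; the Bernstein--Zelevinsky criterion you invoke is particular to $\GL_n$. The statement of part (1) is a genuine theorem: the paper simply cites Casselman's notes, Theorem 7.1.4, and emphasizes that it is nontrivial. You must either quote such a result or give a real argument, for instance by showing that every irreducible subquotient of $\pi$ has nonvanishing Jacquet module along $N$ itself (this requires the cuspidal-support machinery plus the fact that for an irreducible representation the Jacquet modules along the associate parabolics $P$ and $P^-$ vanish simultaneously) and then comparing with the length-one module $r_N(\pi)=\sigma$.
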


\begin{proof}
Part (1) of the proposition is
\cite{casselman:book}*{Theorem 7.1.4},
and is nontrivial; the other parts 
are more elementary, and follow from considerations of the Jacquet module which we undertake now. In these
parts we do not have to go into the deeper aspects of the subject regarding when reducibility actually 
occurs.

For $P=MN$, let $P^-=M N^-$ be the opposite parabolic. Then $P^-$ and $P$ are conjugate in $G$ if and only if 
$N_G(M)\not = M$. If $P$ and $P^-$ are conjugate in $G$, then $P$ is the unique parabolic in $G$ up to conjugacy 
in its associate class;
otherwise, there are two distinct conjugacy classes of parabolics in the associate class of $P$. 
It follows from the {\it geometric lemma} that $r_N(\pi) = \sigma$ if $N_G(M)=M$, and
that if $N_G(M) \not = M$,
then $r_N(\pi)$ has Jordan-H\"older
factors $\sigma$ and $\sigma^w$. If $\sigma \not \cong \sigma^w$, then since $\sigma$ is supercuspidal, 
$r_N(\pi) = \sigma \oplus \sigma^w$. In this case if $\pi$ is reducible, with $\pi_1$ and $\pi_2$ as the 
Jordan-H\"older factors of $\pi$, then we can assume that $r_N(\pi_1) = \sigma$, and $r_N(\pi_2) = \sigma^w$.
From Frobenius reciprocity,
$$
\Hom_G[\pi_2,\pi] = \Hom_M[r_N(\pi_2), \sigma] = \Hom_M[\sigma^w, \sigma]= 0.
$$
proving that if $\sigma \not \cong \sigma^w$, 
and $\pi$ is reducible, it is indecomposable with distinct 
Jordan-H\"older factors, proving part (2) of the proposition.

Note that if $N_G(M)/M$ is nontrivial and $\sigma^w \cong \sigma$, 
$\sigma$ must be unitary when restricted to the intersection of $M$ and the derived group $[G,G]$ of $G$.
If the supercuspidal representation $\sigma$ of the Levi subgroup $M$ is unitary, then $\pi$ is 
completely reducible, and we see that the Jordan-H\"older factors of $\pi$ are distinct by a 
calculation of $\Hom_G[\pi,\pi] =  \Hom_M[r_N (\pi),\sigma]$, which is a two dimensional vector space over $\C$.
If $\sigma$ is not unitary when restricted to $M \cap [G,G]$, in particular
$\sigma \not \cong \sigma^w$, we see that the Jordan-H\"older factors of $\pi$ are distinct by
noting that their Jacquet modules are $\sigma$ and $\sigma^w$, proving 
part (3) of the proposition. 
\end{proof}

\section{Proof of Theorem \ref{thm:main}}
\begin{proof}
If $\pi$ is irreducible, then the result follows from
Proposition \ref{prop:EP}(\ref{item:EP-ps}),
together with Proposition \ref{prop:ext-vanish}.

Suppose from now on that $\pi$ is reducible. Assume first that we have a non-split short exact sequence
\begin{equation*}
\tag{$*$}
0
\longrightarrow \pi_1 
\longrightarrow \pi 
\longrightarrow \pi_2 
\longrightarrow 0.
\end{equation*}
From ($*$), we have that $\Ext^1(\pi_2,\pi_1)$ is nontrivial, and this by 
Proposition \ref{prop:inequivalent} implies that the inducing representation $\sigma$ is not 
unitary even after twisting by characters of $G$ (restricted to the Levi subgroup).
By replacing the inducing representation $\sigma$ with its Weyl conjugate,
we obtain another principal series representation $\pi'$
which will have $\pi_1$ as a 
quotient, and $\pi_2$ as a subrepresentation.
Since $\sigma$ is not unitary,
Proposition \ref{prop:inequivalent} implies that
$\pi'$ does not split.
Thus,
$\Ext^1(\pi_1,\pi_2)$ is nontrivial.

Working in the category $\RR^\chi(G)$,
apply $\Hom( \pi_1,-)$
to ($*$)
and consider the induced long exact sequence
$$
\begin{array}{llll}
0
&\longrightarrow \Hom(\pi_1 , \pi_1)
&\longrightarrow \Hom(\pi_1 , \pi)
&\longrightarrow \Hom(\pi_1 , \pi_2) \\
&\longrightarrow \Ext^1(\pi_1 , \pi_1)
&\longrightarrow \Ext^1(\pi_1 , \pi)
&\longrightarrow \Ext^1(\pi_1 , \pi_2) \\
&\longrightarrow \Ext^2(\pi_1 , \pi_1)
&\longrightarrow \cdots
\end{array}
$$
By Proposition \ref{prop:ext-vanish},
$\Ext^2(\pi_1,\pi_1) = 0$.
From Proposition \ref{prop:inequivalent},
$\Hom(\pi_1,\pi_2) = 0$,
so we have a short exact sequence
\begin{equation*}
\tag{$\bigtriangleup$}
0
\longrightarrow \Ext^1(\pi_1 , \pi_1)
\longrightarrow \Ext^1(\pi_1 , \pi)
\longrightarrow \Ext^1(\pi_1 , \pi_2) 
\longrightarrow 0.
\end{equation*}
Since
$\Ext^1(\pi_1 , \pi_2)$ is nonzero,
and
since $r_{ N} \pi_1 \cong \sigma$,
Frobenius reciprocity
(Proposition \ref{prop:frobenius})
gives
$$
\Ext^1_{\RR^\chi(G)}(\pi_1,\pi) \cong \Ext^1_{\RR^\chi(M)}( r_{N} \pi_1, \sigma)
\cong \Ext^1_{\RR^\chi(M)} (\sigma,\sigma).
$$
Let $\chi_\sigma$ denote the central character of $\sigma$.
Then $\sigma$ is projective in $\RR^{\chi_\sigma}(M)$.
Since $Z(M)/Z(G)$ has split rank $1$,
$\dim \Ext_{\RR^\chi(M)}^1(\sigma,\sigma) = 1$.
(This amounts to the assertion that $\Ext^1_{k^\times}(\mu,\mu) = \C$,
where $\mu$ is a one-dimensional character of $k^\times$.)
From ($\bigtriangleup$), we thus have that
$\Ext^1(\pi_1,\pi_1) = 0$
and
$\Ext^1(\pi_1,\pi_2) = \C$,
as desired.

We now turn to the case when $\pi = \pi_1 +\pi_2$.
This is the nontrivial part of the proposition,
where one wants to construct a nontrivial extension between $\pi_1$ and $\pi_2$,
even though the extension
afforded by the principal series representation in which they sit is split.

From Proposition \ref{prop:ext-vanish},
$\Ext^i(\pi_1,\pi_1) = 0$ for $i > 1$. 
From 
\cite{arthur:elliptic-tempered}*{Proposition 2.1(c)},
the character of $\pi_1$ does not vanish on the elliptic set.
By Proposition \ref{prop:EP}(\ref{item:EP-chars}),
$\EP(\pi_1,\pi_1)$ is positive.
Thus,
\begin{equation*}
\begin{split}
\dim\Ext^1(\pi_1,\pi_1) = \dim\Hom(\pi_1,\pi_1) - \EP(\pi_1,\pi_1) \\
= 1 - \EP(\pi_1,\pi_1) < 1,
\end{split}
\end{equation*}
and thus $\Ext^1(\pi_1,\pi_1) = 0$.
From Proposition \ref{prop:EP}(\ref{item:EP-ps}),
$\dim \Ext^1(\pi_1, \pi)=1$, so it follows that 
$\dim\Ext^1(\pi_1,\pi_2) = 1$.
The rest of the proposition follows
by symmetry between $\pi_1$ and $\pi_2$. 
\end{proof}

\begin{rem}
It may be worth emphasizing that although the proof of Theorem \ref{thm:main}
above might look straightforward, it 
uses rather deep Proposition \ref{prop:EP}(\ref{item:EP-chars}) which is known only in characteristic 0,
and hence so also this theorem.
\end{rem}

\section{A construction of Savin}
\label{sec:savin}

If $\pi$ is a reducible  unitary principal series representation of $\SL_2(k)$
then it has two inequivalent, irreducible subquotients $\pi_1$ and $\pi_2$.
By Theorem 1 we know that
$$
\Ext^1_{\SL_2(k)}(\pi_1,\pi_2) = \C.
$$
G. Savin has offered a natural construction of such an extension, at least when $\pi$ arises from an
unramified quadratic character of $k^\times$.
This construction may be useful in many similar situations,
so we outline it, referring to \cite{savin}
for details.
We begin with some generality.

Let  $K$ be an open compact subgroup of a split reductive $p$-adic group $G$. 
Let $\Hecke=C_c(K\backslash G/K)$ be the Hecke algebra of 
$K$-bi-invariant compactly supported functions on $G$.  If $V$ is a smooth $G$-module,
then $V^K$ is a left $\Hecke$-module. 
It is a standard fact that if $V$ is an irreducible $G$-module, and if  $V^K$ is non-zero, the latter
is an irreducible $\Hecke$-module. 
Conversely, 
if $E$ is a left $\Hecke$-module then 
\[ 
I(E) := C_c(G/K)\otimes_\Hecke E
\] 
is a smooth $G$-module. 
As a right $\Hecke$-module,  $C_c(G/K)$ can be decomposed as
\[
C_c(G/K)=C_c(G/K)'\oplus \Hecke,
\]
where $C_c(G/K)'$ denotes the sum of all non-trivial left $K$-submodules of $C_c(G/K)$.
It follows that 
$I(E)^K\cong E$, as $\Hecke$-modules. Note that $I(E)^K$ generates the $G$-module $I(E)$. 
Let $U(E)\subseteq I(E)$ be the sum of all $G$-submodules of $I(E)$ 
intersecting $I(E)^K$ trivially.  Let $J(E)$ be the quotient $I(E)/U(E)$.  Then $J(E)$ is generated by 
$J(E)^K\cong E$, and any submodule of $J(E)$ contains non-zero $K$-fixed vectors.
Using this, the following proposition is proved. 

\begin{prop}  
Let $E$ be an irreducible $\Hecke$-module.  Then $J(E)$ is the unique irreducible quotient of $I(E)$. 
\end{prop}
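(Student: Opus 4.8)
The plan is to exploit the structural facts already established about $I(E)$, $U(E)$, and $J(E)$, reducing everything to the irreducibility of $E$ as an $\Hecke$-module and the ``taking $K$-fixed vectors'' functor. The claim has two halves: (i) $J(E)$ is irreducible, and (ii) every irreducible quotient of $I(E)$ is isomorphic to $J(E)$.

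**For irreducibility of $J(E)$**, I would argue as follows. Suppose $W \subseteq J(E)$ is a nonzero $G$-submodule. By the construction of $J(E)$, every nonzero submodule of $J(E)$ contains nonzero $K$-fixed vectors, so $W^K \neq 0$. Now $W^K$ is an $\Hecke$-submodule of $J(E)^K \cong E$, and since $E$ is irreducible we get $W^K = J(E)^K$. But $J(E)$ is generated as a $G$-module by $J(E)^K$ (this was noted in the excerpt), hence $W \supseteq G\cdot J(E)^K = J(E)$, so $W = J(E)$. Thus $J(E)$ has no proper nonzero submodule; it remains only to note $J(E) \neq 0$, which holds because $J(E)^K \cong E \neq 0$. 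Therefore $J(E)$ is irreducible.

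**For the quotient $I(E) \twoheadrightarrow J(E)$**, the map is $I(E) \to I(E)/U(E) = J(E)$, which is visibly a surjection, and it is nonzero. **For uniqueness among irreducible quotients**, let $q\colon I(E) \twoheadrightarrow V$ be any surjection onto an irreducible $G$-module $V$. Taking $K$-fixed vectors is exact on smooth representations, so $q$ induces a surjection $I(E)^K \twoheadrightarrow V^K$; since $I(E)$ is generated by $I(E)^K$, the image $V$ is generated by $V^K$, so $V^K \neq 0$, and as $V$ is irreducible, $V^K$ is an irreducible $\Hecke$-module. Moreover $\ker q$ is a proper submodule of $I(E)$, so $(\ker q)^K$ is a proper $\Hecke$-submodule of $I(E)^K \cong E$; by irreducibility of $E$ this forces $(\ker q)^K = 0$, i.e. $\ker q$ is a $G$-submodule of $I(E)$ meeting $I(E)^K$ trivially. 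Hence $\ker q \subseteq U(E)$ by definition of $U(E)$, and $q$ factors through $I(E)/U(E) = J(E)$, giving a surjection $J(E) \twoheadrightarrow V$. Since $J(E)$ is irreducible, this is an isomorphism.

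**The step I expect to be the main (if modest) obstacle** is making sure the two key properties of $J(E)$ quoted from the excerpt --- namely that $J(E)$ is generated by $J(E)^K$ and that every nonzero submodule of $J(E)$ contains nonzero $K$-fixed vectors --- are used correctly, together with the exactness of $V \mapsto V^K$ and the identification $I(E)^K \cong E$. Everything else is formal. Note this argument does not require $E$ to be finite-dimensional or $\Hecke$ to have any special structure beyond what is stated; it is the standard ``Iwahori--Matsumoto'' style argument adapted to the setup of the excerpt.
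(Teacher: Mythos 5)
Your argument is correct and is exactly the route the paper intends: it deduces the proposition from the two properties stated just before it (that $J(E)$ is generated by $J(E)^K\cong E$ and that every nonzero submodule of $J(E)$ has nonzero $K$-fixed vectors), together with exactness of $V\mapsto V^K$ and $I(E)^K\cong E$; the paper itself only says ``using this, the proposition is proved'' and defers details to Savin. Your write-up supplies those details faithfully (the one step worth making explicit is that $(\ker q)^K$ is proper in $I(E)^K$ \emph{because} $I(E)$ is generated by $I(E)^K$, a fact you invoke only a line earlier).
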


Assume now that $K$ is hyperspecial and let
$\I\subseteq K$ be an  Iwahori subgroup. Since $\Hecke$ is commutative, 
every irreducible $\Hecke$-module is one dimensional.
Pick one, and call it $\C_\chi$.
Any subquotient of $I(\C_\chi)$ is generated by its 
$\I$-fixed vectors.
As in
\cite{savin},
denoting by $X$ the cocharacter group of a maximal split torus of $G$, we have 
\[ 
I(\C_\chi)^\I=C_c(\I\backslash G/K) \otimes_\Hecke \C_\chi \cong \C[X]\otimes_{\C[X]^W} \C_\chi. 
\] 

From generality about integral extensions of commutative integrally closed domains,
$\C[X]\otimes_{\C[X]^W} \C_\chi$ has dimension 
equal to $|W|$, hence
$\dim(I(\C_\chi)^\I)=|W|$. 
We specialize further to $G=\SL_2(k)$.  Let $V=\pi_1$ be the unique irreducible tempered representation of $G$ such that 
$\dim(V^K)=\dim(V^\I)=1$. Then $I(V^K)$ has length 2, and is the representation of $\SL_2(k)$ corresponding to a non-trivial element of
$\Ext^1_{\SL_2(k)}(\pi_1,\pi_2) = \C$ that we desired to construct since the 
unique irreducible quotient of $I(V^K)$
is $V = \pi_1$. If  $U$ is the unique irreducible 
submodule of $I(V^K)$,
then $\dim(U^K)=0$ and $\dim(U^\I)=1$,
thus $U$ is the irreducible representation such that $V\oplus U$ is isomorphic to the representation induced from 
the unique non-trivial, unramified, quadratic character of $k^{\times}$, forcing $U$ to be $\pi_2$.

\section{Preliminary results for Theorem \ref{thm:sln}}
\label{sec:sln-prelim}

We recall a small part of the theory of types \cite{bushnell-kutzko:smooth}. 
The starting point is the 
fundamental result, due to Bernstein, that the category $\RR(G)$ of smooth complex 
representations of $G$ decomposes as a direct sum of certain indecomposable full subcategories,
now often called the Bernstein components of $\RR(G)$:
\[
\RR(G) = \coprod_{\fraks \in \mathcal{B}(G)} \RR_{\fraks}(G).
\] 
The indexing set $\mathcal{B}(G)$ consists of (equivalence classes of)
irreducible supercuspidal representations of Levi subgroups $M$ of $G$ 
up to conjugation by $G$ and twisting by unramified characters of $M$,
i.e., characters that are trivial on all compact subgroups of $M$.

Suppose $\fraks\in \mathcal{B}(G)$ corresponds to an irreducible supercuspidal representation,
say $\sigma$, of a
Levi subgroup $M$ of $G$.
The irreducible objects in $\RR_{\fraks}(G)$
are then precisely the irreducible subquotients of the various parabolically
induced representations $i_P^G(\sigma \nu)$
as $\nu$ varies through the unramified characters of $M$ and where $P$ is any parabolic subgroup 
of $G$ with Levi component $M$.

We note the following lemma which is a simple consequence of Bernstein theory but which, however, 
does not follow from Frobenius reciprocity.
The result can also be found in
\cite{vigneras:extensions}*{Theorem 6.1}.

\begin{lemma}
\label{lem:cuspidal-support}
Let $\pi_1$ and $\pi_2$ be two irreducible admissible representations of $G$ with different
cuspidal support. Then
$$
\Ext^i(\pi_1,\pi_2) = 0 \quad\text{for all}\quad i \geq 0.
$$
\end{lemma}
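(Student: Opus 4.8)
The plan is to derive the vanishing purely from the structure of the \emph{center} of the category $\RR(G)$, i.e.\ from Bernstein's theory of the center. Recall that Bernstein's theorem identifies, for each $\fraks=[M,\sigma]\in\mathcal{B}(G)$, the center $\mathfrak{z}_\fraks$ of the block $\RR_\fraks(G)$ with the ring $\OO(\Omega_\fraks)$ of regular functions on the affine algebraic variety $\Omega_\fraks$ whose points are precisely the $G$-conjugacy classes of cuspidal pairs in the inertial class $\fraks$ --- that is, the possible cuspidal supports of irreducible objects of $\RR_\fraks(G)$. For an irreducible admissible $\pi\in\RR_\fraks(G)$, Schur's lemma gives $\End_G(\pi)=\C$, and $\mathfrak{z}_\fraks$ acts on $\pi$ through the character ``evaluation at the point $x_\pi\in\Omega_\fraks$ attached to the cuspidal support of $\pi$''. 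Assembling the $\mathfrak{z}_\fraks$ together with the block idempotents produces the full Bernstein center $\mathfrak{Z}(G)$, the ring of endomorphisms of the identity functor of $\RR(G)$, which acts on every irreducible admissible $\pi$ by a character determined by (and determining) its cuspidal support.

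Next I would produce a central element separating $\pi_1$ and $\pi_2$. If $\pi_1$ and $\pi_2$ lie in different Bernstein components $\RR_{\fraks_1}(G)\ne\RR_{\fraks_2}(G)$, take $z\in\mathfrak{Z}(G)$ to be the block idempotent supported on $\fraks_1$: it acts by $1$ on $\pi_1$ and by $0$ on $\pi_2$. If instead $\pi_1,\pi_2\in\RR_\fraks(G)$ for a single $\fraks$, then since they have different cuspidal support by hypothesis, the corresponding points $x_{\pi_1}\ne x_{\pi_2}$ of $\Omega_\fraks$ are distinct; as $\Omega_\fraks$ is an affine variety, distinct (closed) points are separated by a regular function, so there is $z\in\mathfrak{z}_\fraks\subseteq\mathfrak{Z}(G)$ with $z(x_{\pi_1})\ne z(x_{\pi_2})$. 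In either case we obtain $z\in\mathfrak{Z}(G)$ acting on $\pi_1$ and on $\pi_2$ by distinct scalars $\lambda_1\ne\lambda_2$.

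Finally I would invoke the standard fact that an element $z$ of the center of an abelian category --- i.e.\ a natural transformation of the identity functor --- acts on $\Ext^i(\pi_1,\pi_2)$ in two a priori distinct ways, functorially through the first variable via $z_{\pi_1}\in\End_G(\pi_1)$ and through the second via $z_{\pi_2}\in\End_G(\pi_2)$, and that these two actions coincide: already on $\Hom$-groups naturality gives $f\circ z_{\pi_1}=z_{\pi_2}\circ f$ for every $f$, and this propagates to all derived functors because $z$ acts naturally on an injective resolution of $\pi_2$ in $\RR(G)$. Consequently $z$ acts on $\Ext^i_{\RR(G)}(\pi_1,\pi_2)$ simultaneously as multiplication by $\lambda_1$ and by $\lambda_2$, so the nonzero scalar $\lambda_1-\lambda_2$ acts as $0$, forcing $\Ext^i(\pi_1,\pi_2)=0$ for every $i\ge0$; the same argument applies verbatim in $\RR^\chi(G)$. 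The step I expect to be the main obstacle is making this last assertion fully precise --- checking that the Bernstein center's two actions on \emph{higher} $\Ext$ agree, and that $\Ext$ in $\RR(G)$ may be computed inside a single Bernstein block (using that each $\RR_\fraks(G)$ is a direct factor of $\RR(G)$, closed under subquotients, extensions, and direct sums). Both points are routine once Bernstein's description of the center is unwound, but they are exactly where ``a simple consequence of Bernstein theory'' does the work that Frobenius reciprocity cannot.
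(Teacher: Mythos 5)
Your argument is correct and is essentially the paper's own proof: the paper likewise splits into the case of distinct Bernstein components (immediate from the block decomposition) and the case of one component, where it separates the two cuspidal-support points by a regular function $f$ in the Bernstein center and concludes by the same "standard homological algebra" fact that a central element acting by distinct scalars on $\pi_1$ and $\pi_2$ kills all $\Ext^i$. Your write-up simply makes explicit the details (the two actions of the center on higher $\Ext$ agreeing, and computing $\Ext$ within a block) that the paper leaves implicit.
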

\begin{proof} If $\pi_1$ and $\pi_2$ belong to different Bernstein components, then there is nothing to
prove. If they belong to the same Bernstein component, then associated to the component is
an irreducible affine algebraic variety over $\C$ whose space of regular functions is the 
center of the corresponding category.  Now given two distinct points on the affine algebraic variety 
corresponding to $\pi_1$ and $\pi_2$,
there is an element, call it $f$, in the center of the category such that $f$ acts by 0 on
$\pi_1$, and by 1 on $\pi_2$. Standard homological algebra then proves that
$\Ext^i(\pi_1,\pi_2) = 0$ for all $i \geq 0$.
\end{proof}

A pair $(K,\rho)$
consisting of a compact open subgroup $K$ of $G$
and a smooth irreducible representation $\rho$ of $K$ is called an 
$\fraks$-\emph{type} if the irreducible smooth representations of $G$
that contain $\rho$ on restriction to $K$ are exactly the irreducible 
objects in $\RR_{\fraks}(G)$.
In this case, the category $\RR_{\fraks}(G)$ is equivalent to the category of
(left) modules over the intertwining 
or Hecke algebra of $\rho$.
More precisely, let $W$ denote the space of $\rho$ and write ${\Hecke}(G,\rho)$ for the space of compactly 
supported functions $\Phi:G \to \End(W^\vee)$ such that 
$$
\Phi(k_1gk_2) = \rho^\vee(k_1) \Phi(g) \rho^\vee(k_2),
$$
where, as usual,
$\rho^\vee$ is the dual of $\rho$.
This is a convolution algebra (with respect to a fixed Haar measure on $G$). 
The endomorphism algebra $\End_G(\ind_K^G \rho)$ is isomorphic to 
the opposite of the algebra ${\Hecke}(G,\rho)$, so that a right 
$\End_G(\ind_K^G \rho)$-module is naturally a 
left ${\Hecke}(G,\rho)$-module.  This allows one to give a natural 
${\Hecke}(G,\rho)$-module structure on $\Hom_K(\rho, \pi)$ for any
smooth representation $\pi$ of $G$.

We mention two basic examples of $\fraks$-types which served as precursors to the general theory.
In the first, $M = G$. Thus $\sigma$ is a supercuspidal representation of $G$ and the 
irreducible objects in $\RR_\fraks(G)$ 
are simply the unramified twists of $\sigma$. In this case, elementary
arguments show that the existence of an $\fraks$-type is closely
related to the statement that $\sigma$ is induced from a compact mod center subgroup of $G$
(see \cite{bushnell-kutzko:smooth}*{\S5.4}).
In particular, 
if $\sigma$ is induced in this way,
then an $\fraks$-type exists and is easily described in terms of the inducing data for $\sigma$.
We note that through the work of 
J.-K.\ Yu \cite{yu:supercuspidal}, Julee Kim \cite{jkim:exhaustion},
and S. Stevens \cite{stevens:classical-sc},
the existence of such types is now known for all reductive groups under a 
tameness hypothesis and for many classical groups in 
odd residual characteristic.
Types exist for $\GL(n)$ and $\SL(n)$ without
any restriction on residue characteristic by the work of Bushnell-Kutzko \cite{bushnell-kutzko:smooth}, 
and Goldberg-Roche \cites{goldberg-roche:sln-types,goldberg-roche:sln-hecke}.

In the second example, $\RR_\fraks(G)$ is defined by 
$\sigma$, the trivial representation of a minimal Levi subgroup $M$ of $G$. Since a minimal Levi subgroup
has no proper parabolic subgroup, the trivial representation of $M$ is supercuspidal; further, 
it is known that $M$ is compact modulo its center. 
In this case, the trivial representation of 
an Iwahori subgroup $\I$ provides an $\fraks$-type:
this is the classical result of Borel and Casselman that an irreducible 
smooth representation of $G$ contains 
non-trivial $\I$-fixed vectors if and only if it is a constituent of an unramified principal series. 
The general theory posits that these two examples are extreme instances of a general phenomenon.

A fundamental feature of Bushnell-Kutzko's theory is that parabolic induction can be transferred effectively
to the Hecke algebra setting and we make essential use of this feature below. 
We recall a special case which is more than adequate to our needs. 
Let $\sigma$ be an irreducible supercuspidal representation of a Levi subgroup $M$ of $G$ and write 
$\RR_{\fraks_M}(M)$ for the resulting component of $\RR(M)$. 
Thus the irreducible objects in $\RR_{\fraks_M}(M)$ are simply the various unramified twists of $\sigma$.
We also write
$\RR_{\fraks}(G)$ for the resulting component of $\RR(G)$.
We assume that $\RR_{\fraks_M}(M)$ admits a type $(K_M, \rho_M)$.
We assume also that $(K_M,\rho_M)$ admits a $G$-\emph{cover} $(K,\rho)$
whose definition due to Bushnell and Kutzko we recall below 
(see \cite{bushnell-kutzko:smooth} \S 8).

Given a parabolic $P=MN$, with opposite parabolic $P^- = MN^-$, we call a pair $(J,\tau)$ consisting of
a compact open subgroup $J$ of $G$, and a finite-dimensional irreducible representation $\tau$ of $J$
\emph{decomposed} with respect to $(P,M)$ if
\begin{enumerate}
\item $J = (J \cap N^-)\cdot (J\cap M) \cdot (J\cap N)$.
\item The groups $J\cap N^-$ and $J \cap N$ act trivially under $\tau$, so $\tau$ restricted
to $J_M= J \cap M$ is an irreducible representation; call it $\tau_M$.
\end{enumerate}

Let $I_G(\tau)$ denote the set of elements $g$ in $G$ such that there is a function $f$ in
${\Hecke}(G,\tau)$ whose support contains $g$. It can be seen that if $(J,\tau)$ is decomposed with respect to 
$(P,M)$, then
$$
I_M(\tau_M) = I_G(\tau) \cap M.
$$
Further, if $\phi \in {\Hecke}(M,\tau_M)$ has support $J_MzJ_M$ for some $z \in M$, there is a 
unique $T(\phi) = \Phi \in {\Hecke}(G,\tau)$ with support contained in $JzJ$, and with $\Phi(z) = \phi(z)$.
The map $T \colon \phi \rightarrow \Phi$ from ${\Hecke}(M,\tau_M)$ to ${\Hecke}(G,\tau)$ is an
isomorphism of vector spaces onto ${\Hecke}(G,\tau)_M$, the subspace of ${\Hecke}(G,\tau)$ with support
contained in $JMJ$.

One calls an element $z\in M$ \emph{positive} with respect to $(J, N)$ if it satisfies,
$$
z(J\cap N)z^{-1} \subset (J \cap N), \,\,\, z^{-1} (J\cap N^-)z \subset (J \cap N^-).
$$

Let $I^+$ denote the set of positive elements of $I_M(\tau_M)= I_G(\tau) \cap M$, and let
${\Hecke}(M,\tau_M)^+$ 
denote the space of functions in ${\Hecke}(M,\tau_M)$ with support
contained in $J_MI^+J_M$. Then the map $T$ 
from ${\Hecke}(M,\tau_M)$ to ${\Hecke}(G,\tau)$ when
restricted to ${\Hecke}(M,\tau_M)^+$ is an 
algebra homomorphism sending identity element of ${\Hecke}(M,\tau_M)$ to the identity
element of ${\Hecke}(G,\tau)$; it extends uniquely to an injective algebra homomorphism
from ${\Hecke}(M,\tau_M)$ to ${\Hecke}(G,\tau)$ when the pair $(J,\tau)$ is a $G$-cover
(to be defined below)
of $(J_M,\tau_M)$.

Define an element $\zeta$ of the center $Z(M)$ of $M$ to be \emph{strongly positive} if
it is positive, and has the property that given compact open subgroups
$H_1$ and $H_2$ of $N$, there is a power $\zeta^m$, $m \geq 0$, which conjugates $H_1$ 
into $H_2$, and similarly a property for subgroups of $N^-$ by negative powers of 
$\zeta$.

Here then is the definition of a $G$-cover.

\begin{defn}
Let $M$ be a Levi subgroup of a reductive group $G$. Let $J_M$ be a compact open subgroup of $M$,
and $(\tau_M,W)$ an irreducible smooth representation of $J_M$. Let $J$ be a compact open subgroup of $G$,
and $\tau$ an irreducible smooth representation of $J$. The pair $(J,\tau)$ is $G$-cover of $(J_M,\tau_M)$
if the following holds:

\begin{enumerate}
\item The pair $(J,\tau)$ is decomposed with respect to $(M,P)$, in the sense defined earlier,
for all parabolics $P$ with Levi $M$.

\item $J \cap M = J_M, $ and $\tau|_{J_M}= \tau_M$.

\item For every parabolic $P=MN$ with Levi $M$, there exists an invertible element 
of ${\Hecke}(G,\tau)$ supported on a double coset $J\zeta_PJ$ 
where $\zeta_P \in Z(M)$ is strongly $(J, N)$-positive.

\end{enumerate}
\end{defn}

The definition of a $G$-cover is tailored to achieve the following result,
which can be found in \cite{bushnell-kutzko:smooth}.

\begin{prop}
Let $P$ be a parabolic subgroup of a reductive $k$-group $G$,
and let $M$ be a Levi factor of $P$.
Let $J_M$ be a compact open subgroup of $M$,
and $(\tau_M,W)$ an irreducible smooth representation of $J_M$.
Suppose that $\RR_{\fraks_M}(M)$ 
is a component of $\RR(M)$, defined by the type 
$(\tau_M,W)$. Let $J$ be a compact open subgroup of $G$,
and $\tau$ an irreducible smooth representation of $J$.
If the 
pair $(J,\tau)$ is a $G$-cover of $(J_M,\tau_M)$,
then parabolic induction from $P$ to $G$
of representations in $\RR_{\fraks_M}(M)$ defines a component in $\RR(G)$ 
with $(J,\tau)$ as a type.
\end{prop}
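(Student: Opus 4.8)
The statement is the main theorem on covers in the Bushnell--Kutzko theory of types \cite{bushnell-kutzko:smooth}; here is the shape of the argument I would give. Write $\fraks=[M,\sigma]_G$ and $\fraks_M=[M,\sigma]_M$ for the relevant inertial classes, so that $\RR_{\fraks_M}(M)$ is the component carrying the type $(J_M,\tau_M)$ and its irreducible objects are exactly the unramified twists $\sigma\nu$ of $\sigma$. Introduce the two ``type functors'' $\mathbf{M}_\tau=\Hom_J(\tau,-)\colon\RR(G)\to\Hecke(G,\tau)\Mod$ and $\mathbf{M}_{\tau_M}=\Hom_{J_M}(\tau_M,-)\colon\RR(M)\to\Hecke(M,\tau_M)\Mod$; since $J$ and $J_M$ are compact, $\tau$ and $\tau_M$ are projective and injective in the categories of smooth representations of these groups, so both functors are exact. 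What must be shown is: (i) an irreducible $\pi\in\RR(G)$ satisfies $\mathbf{M}_\tau(\pi)\neq 0$ exactly when $\pi$ is an irreducible subquotient of some $i_P^G(\sigma\nu)$, i.e.\ exactly when $\pi$ lies in $\RR_{\fraks}(G)$; and (ii) $\mathbf{M}_\tau$ restricts to an equivalence $\RR_{\fraks}(G)\xrightarrow{\ \sim\ }\Hecke(G,\tau)\Mod$. Together these say precisely that $(J,\tau)$ is an $\fraks$-type and that $\RR_{\fraks}(G)$ is the component produced by parabolic induction from $\RR_{\fraks_M}(M)$.

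First I would assemble the algebra-level consequences of the cover axioms already recorded above: the injective algebra homomorphism $t_P\colon\Hecke(M,\tau_M)\hookrightarrow\Hecke(G,\tau)$ extending the support-preserving map $T$ on the positive subalgebra, and an invertible element $u_P\in\Hecke(G,\tau)$ supported on a single double coset $J\zeta_PJ$ with $\zeta_P\in Z(M)$ strongly $(J,N)$-positive. Using that multiplication by a sufficiently high power of $\zeta_P$ makes the support of any element of $\Hecke(M,\tau_M)$ positive, together with the symmetric statement for $N^-$ and negative powers, one deduces that $\Hecke(G,\tau)$ is generated as a $t_P(\Hecke(M,\tau_M))$-module --- on either side --- by $u_P$ and $u_P^{-1}$, and in fact is free over $t_P(\Hecke(M,\tau_M))$. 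In particular the extension-of-scalars functor $\mathbf{I}:=\Hecke(G,\tau)\otimes_{\Hecke(M,\tau_M),\,t_P}(-)$ is exact and faithful.

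The technical core --- and the step I expect to be the main obstacle --- is the compatibility of $\mathbf{M}_\tau$ with parabolic induction: for every $\pi\in\RR_{\fraks_M}(M)$ one wants a natural isomorphism
\[
\mathbf{M}_\tau\bigl(i_P^G\pi\bigr)\ \cong\ \mathbf{I}\bigl(\mathbf{M}_{\tau_M}(\pi)\bigr).
\]
The underlying identification of vector spaces comes from the Iwasawa decomposition together with the fact that $\tau$ is decomposed with respect to $(P,M)$: a $\tau$-isotypic vector in the space of $i_P^G\pi$ is determined by its restriction to $J_M$, and this exhibits the space as $\mathbf{M}_{\tau_M}(\pi)$, with the bookkeeping of the $\delta_P^{1/2}$ of normalized induction folded in. What requires real work is to show that the resulting $\Hecke(G,\tau)$-module structure is exactly the one obtained by extending scalars along $t_P$; since $t_P(\Hecke(M,\tau_M))$ together with $u_P$ generate $\Hecke(G,\tau)$, it is enough to check this on $t_P(\Hecke(M,\tau_M))$, where it reduces to the definition of $T$, and on $u_P$, where one must compute the action of a function supported on $J\zeta_PJ$ using the decomposed structure with respect to \emph{both} $P$ and its opposite $P^-$ and the strong positivity of $\zeta_P$ to move supports into $JMJ$. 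This is the argument for which the notion of a $G$-cover was designed, and it is where essentially all of the difficulty lies.

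Granting the comparison isomorphism, assertions (i) and (ii) follow by Morita-theoretic bookkeeping. Since $(J_M,\tau_M)$ is an $\fraks_M$-type, $\mathbf{M}_{\tau_M}$ is an equivalence $\RR_{\fraks_M}(M)\simeq\Hecke(M,\tau_M)\Mod$; combined with $\mathbf{M}_\tau\circ i_P^G\cong\mathbf{I}\circ\mathbf{M}_{\tau_M}$ and the faithful exactness of $\mathbf{I}$, this shows that $\mathbf{M}_\tau$ annihilates no nonzero subquotient of an $i_P^G(\sigma\nu)$, while Frobenius reciprocity (Proposition \ref{prop:frobenius}), the geometric lemma, and finiteness of $\Hecke(G,\tau)$ as a module over the commutative algebra $t_P(\Hecke(M,\tau_M))$ force conversely that $\mathbf{M}_\tau(\pi)\neq 0$ implies $\pi$ is such a subquotient; this gives (i). For (ii) one checks that $\mathbf{M}_\tau$ is exact and, on the objects $i_P^G(\sigma\nu)$, fully faithful, by comparing $\Hom_G(i_P^G\pi_1,i_P^G\pi_2)$ --- evaluated via Frobenius reciprocity and the geometric lemma --- with $\Hom_{\Hecke(G,\tau)}(\mathbf{I}\mathbf{M}_{\tau_M}\pi_1,\mathbf{I}\mathbf{M}_{\tau_M}\pi_2)$ --- evaluated by adjunction and the explicit bimodule structure of $\Hecke(G,\tau)$ over $t_P(\Hecke(M,\tau_M))$; since these objects generate $\RR_{\fraks}(G)$, an exact fully faithful functor that is an isomorphism on them is an equivalence. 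Hence $(J,\tau)$ is an $\fraks$-type and $\RR_{\fraks}(G)$ is the Bernstein component obtained from $\RR_{\fraks_M}(M)$ by parabolic induction from $P$; the full details are in \cite{bushnell-kutzko:smooth}.
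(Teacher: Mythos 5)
The paper contains no internal proof of this proposition: it is stated as the result that the cover axioms are ``tailored to achieve,'' with a bare citation to \cite{bushnell-kutzko:smooth}, so your sketch is playing the role of that citation and, like the paper, it ultimately defers the hard details to Bushnell--Kutzko. Your outline has the right general shape (transfer along the embedding $t_P$, a comparison of $\Hom_J(\tau,-)\circ i_P^G$ with a change-of-rings functor, then conclude the type property), but two of its load-bearing claims are not correct as stated. First, the invertible element $u_P$ is not extra data beyond the embedding: since it is supported on the single double coset $J\zeta_P J$ with $\zeta_P\in Z(M)$ positive, it lies in $\Hecke(G,\tau)_M$ and is, up to a scalar, $t_P$ applied to the function in $\Hecke(M,\tau_M)$ supported on $J_M\zeta_P J_M$. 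Hence the $t_P(\Hecke(M,\tau_M))$-module (or algebra) generated by $u_P^{\pm1}$ is just the image of $t_P$ itself, which in general is a \emph{proper} subalgebra of $\Hecke(G,\tau)$ --- in the Iwahori case it is the commutative subalgebra $\Hecke(T\doubleslash T(\OO))$, over which $\Hecke(\I)$ is free of rank $|W|$. So your claim that $\Hecke(G,\tau)$ is generated over $t_P(\Hecke(M,\tau_M))$ by $u_P$ and $u_P^{-1}$, and the ensuing reduction ``it suffices to check the module structure on $t_P(\Hecke(M,\tau_M))$ and on $u_P$,'' fail; the comparison isomorphism has to be proved directly, as Bushnell--Kutzko do.

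Second, the comparison functor in \cite{bushnell-kutzko:smooth}*{Cor.~8.4}, which the paper reproduces as diagram \eqref{ind-types}, is the coinduction $S\mapsto\Hom_{\Hecke(M,\tau_M)}(\Hecke(G,\tau),S)$ along $t_P$, not the extension of scalars $\Hecke(G,\tau)\otimes_{\Hecke(M,\tau_M)}(-)$ that you use; these are adjoints on opposite sides and are not interchangeable without an additional argument (the tensor version is bound up with the opposite parabolic and second adjunction), and your appeal to freeness of $\Hecke(G,\tau)$ over $t_P(\Hecke(M,\tau_M))$, as well as to commutativity of that subalgebra, is unjustified in this generality and unnecessary. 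Finally, the direction ``$\Hom_J(\tau,\pi)\neq0$ implies $\pi\in\RR_{\fraks}(G)$,'' which you dispatch with Frobenius reciprocity plus a finiteness assertion, is where the cover axioms actually do their work in \cite{bushnell-kutzko:smooth}: positivity and the invertibility of $u_P$ give that the canonical map $\Hom_J(\tau,\pi)\to\Hom_{J_M}(\tau_M,r_N\pi)$ is an isomorphism (the Casselman-style statement generalizing Iwahori-fixed vectors in the Jacquet module), and it is this, combined with $(J_M,\tau_M)$ being an $\fraks_M$-type, that pins down the cuspidal support. As a self-contained proof your proposal therefore has genuine gaps, though as a guide to the cited literature it points in the right direction.
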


Recall the following result of
Moy-Prasad, \cite{moy-prasad}*{Proposition 6.4}.
Let $\PP$ be a parahoric subgroup of a reductive group $G$ over $k$, with
$\PP^+$ the pro-unipotent radical of $\PP$.
If $\F_q$ is the residue field of $k$, then
$\PP/\PP^+$ is the group of rational points of a reductive $\F_q$-group.
There is a unique $\PP$-conjugacy class of Levi subgroups $M$ in $G$ such that $\M = \PP \cap M$
is a maximal parahoric subgroup in $M$ with
$$
\M/\M^+ \cong \PP/\PP^+.
$$
The following result of Morris \cite{morris:G-types}
constructs $G$-covers for all depth-zero types of Levi subgroups.
The relevance of this result for us is that in the tame case, i.e., $(n,p)=1$,
the representations of $\SL_n(k)$ that we consider have depth zero.
Although we will obtain $G$-covers for them from the work of Goldberg-Roche,
in the tame case we could have used Morris's result instead.
In fact,
Morris goes further to identify the Hecke algebra $\Hecke(G,\rho)$ too, but we do not go into that.

\begin{prop}
Let $G$ be a reductive algebraic group over a non-archimedean local field $k$.
Let $\PP$ be a parahoric subgroup of $G$, defining a Levi subgroup $M$, and maximal
parahoric $\M$ in $M$ as above with
$$
\M/\M^+ \cong \PP/\PP^+,
$$
allowing one to construct representations of
$\PP$ from representations of $\M/\M^+$.
Let $\rho$ be any irreducible 
representation of $\PP$ arising out of this construction.
Then $(\PP,\rho)$ is a 
$G$-cover of $(\M, \rho|_{\M})$.
\end{prop}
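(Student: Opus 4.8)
This is the theorem of Morris \cite{morris:G-types}; the plan is to verify, in order, the three conditions in the definition of a $G$-cover for the pair $(\PP,\rho)$ --- the two soft conditions (1) and (2), and then the substantive condition (3). Throughout I write $\rho_M := \rho|_{\M}$; by construction this is the inflation to $\M$ of the (cuspidal) irreducible representation $\bar\rho$ of $\M/\M^+\cong\PP/\PP^+$ out of which $\rho$ is built. I fix a maximal $k$-split torus $T$ of $G$ whose apartment contains the facet defining $\PP$, chosen so that $M=Z_G(S)$ for a subtorus $S\subseteq T$; then the parabolics $P=MN$ with Levi $M$ are exactly those whose unipotent radical is spanned by the $S$-root subgroups positive on a chosen chamber.

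Conditions (1) and (2) are soft: they rest only on the triangular (Iwahori) factorization of parahoric subgroups together with the fact that $\rho$ is inflated from $\PP/\PP^+$. For any parabolic $P=MN$ with Levi $M$ one has $\PP=(\PP\cap N^-)\cdot(\PP\cap M)\cdot(\PP\cap N)$ with respect to the $S$-root subgroups; since $N$ and $N^-$ are unipotent the outer factors are pro-$p$, hence lie in the pro-unipotent radical $\PP^+$, and therefore act trivially under $\rho$. So $(\PP,\rho)$ is decomposed with respect to $(P,M)$ for every such $P$, which gives (1); and (2) holds because $\PP\cap M=\M$ by the Moy--Prasad normalization recalled above and $\rho|_{\M}=\rho_M$ by definition. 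In particular $\rho_M$ is irreducible.

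Condition (3) is the heart of the statement: for each $P=MN$ one must produce an invertible element of $\Hecke(G,\rho)$ supported on a single double coset $\PP\zeta_P\PP$ with $\zeta_P\in Z(M)$ strongly $(\PP,N)$-positive. First, such $\zeta_P$ exist --- take $\zeta_P$ in the maximal split subtorus of $Z(M)$, deep enough in the Weyl chamber attached to $P$ that high powers of $\zeta_P$ conjugate any compact subgroup of $N$ into $\PP\cap N$ and any compact subgroup of $N^-$ into $\PP\cap N^-$. Since $\zeta_P\in Z(M)\subseteq I_M(\rho_M)$, and $I_M(\rho_M)=I_G(\rho)\cap M$ once (1) is known, there is a nonzero $\Phi_{\zeta_P}\in\Hecke(G,\rho)$ supported on $\PP\zeta_P\PP$, which I normalize so that $\Phi_{\zeta_P}(\zeta_P)=\mathrm{id}$; moreover strong positivity makes supports add under multiplication, so $\Phi_{\zeta_P}^{\,m}=\Phi_{\zeta_P^m}$ is again supported on the single coset $\PP\zeta_P^m\PP$ for every $m\ge 0$ (length-additivity of dominant translations).

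The remaining --- and main --- difficulty is to show that $\Phi_{\zeta_P}$ is actually a unit of $\Hecke(G,\rho)$. One should beware that its inverse is \emph{not} supported on $\PP\zeta_P^{-1}\PP$: already in the Iwahori--Hecke algebra of $\SL_2$ the inverse of a translation operator is spread over infinitely many double cosets, so invertibility cannot be read off from the combinatorics of double cosets alone. Instead one invokes the known structure of the depth-zero Hecke algebra $\Hecke(G,\rho)$ --- it is an algebra of affine-Hecke type carrying a commutative ``Bernstein'' subalgebra isomorphic to a ring of Laurent polynomials (the group algebra of a lattice), in which, up to a unit, $\Phi_{\zeta_P}$ is the Bernstein element attached to $\zeta_P$ and hence invertible. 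This input, and with it condition (3), I would take from \cite{morris:G-types} (compare \cite{bushnell-kutzko:smooth}*{\S7}) rather than reproduce the computation. Once (3) is established, all three conditions hold and $(\PP,\rho)$ is a $G$-cover of $(\M,\rho_M)$.
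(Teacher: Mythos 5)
The paper itself gives no proof of this proposition: it is simply quoted as a result of Morris \cite{morris:G-types}. So your decision to verify the two soft conditions by hand and to take the substantive condition (3) --- the existence of an invertible element of $\Hecke(G,\rho)$ supported on a single double coset $\PP\zeta_P\PP$ --- from Morris (equivalently, from the Bernstein-type structure of the depth-zero Hecke algebra) is entirely consonant with the paper's treatment, and your outline of conditions (1)--(3) is the right shape of the argument.

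However, your justification of condition (1) contains a false step. You argue that $\PP\cap N$ and $\PP\cap N^{-}$ act trivially under $\rho$ because they are pro-$p$ and ``hence lie in the pro-unipotent radical $\PP^{+}$.'' That implication is not valid: $\PP^{+}$ is the maximal \emph{normal} pro-$p$ subgroup of $\PP$, and a pro-$p$ subgroup need not be contained in it. For instance, for $\PP=\SL_2(\OO)$ hyperspecial and $N$ the upper-triangular unipotent subgroup, $\PP\cap N=N(\OO)$ is pro-$p$ but maps onto a nontrivial unipotent subgroup of $\PP/\PP^{+}\cong\SL_2(\F_q)$. What actually forces $\PP\cap N\subseteq\PP^{+}$ in the setting of the proposition is the defining property of the Moy--Prasad Levi, namely $\M/\M^{+}\cong\PP/\PP^{+}$: every affine root vanishing on the facet defining $\PP$ has vector part lying in the root system of $M$, so for each root subgroup $U_\alpha$ in $N$ or $N^{-}$ the group $\PP\cap U_\alpha$ contributes nothing to the reductive quotient and therefore sits inside $\PP^{+}$. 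In your write-up the hypothesis $\M/\M^{+}\cong\PP/\PP^{+}$ is invoked only to identify $\PP\cap M=\M$, so the one point where the specific (Moy--Prasad) choice of $M$ is essential is covered by an incorrect general claim; with that step repaired (and condition (3) still delegated to Morris, exactly as the paper does), the argument is in order.
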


Let $P$ be a parabolic subgroup of $G$ with Levi component $M$.
The functor $i_P^G$ of normalized parabolic induction from
$\RR(M)$ to $\RR(G)$ takes $\RR_{\fraks_M}(M)$ to $\RR_{\fraks}(G)$. 
It therefore corresponds, under the equivalence of
$\RR_{\fraks}(G)$ with ${\Hecke}(G,\rho)$-modules
and its analogue for $M$, to a certain functor from 
${\Hecke}(M,\rho_M)\Mod$ to ${\Hecke}(G,\rho)\Mod$.
To describe this, we note that there is a certain (explicit) embedding of $\C$-algebras
$$
\lambda_P\colon {\Hecke}(M,\rho_M) \longrightarrow  {\Hecke}(G,\rho). 
$$
This induces a functor $(\lambda_P)\ast$
from ${\Hecke}(M,\rho_M)\Mod$ to ${\Hecke}(G,\rho)\Mod$,
given on objects by 
$$
S \longmapsto {\Hom}_{{\Hecke}(M,\rho_M)}({\Hecke}(G,\rho), S)
$$
where ${\Hecke}(G,\rho)$ is viewed as a left ${\Hecke}(M,\rho_M)$-module via $\lambda_P$ and
${\Hecke}(G,\rho)$ acts by right translations. 
We have the following commutative diagram (up to natural equivalence) by
\cite{bushnell-kutzko:smooth}*{Cor.~8.4}:
\begin{equation}  \label{ind-types}
\begin{CD}
\RR_{\fraks}(G) @>\simeq>>    {\Hecke}(G,\rho)\Mod  \\
@A{i_P^G}AA                               @AA{(\lambda_P) \ast}A  \\
\RR_{\fraks_M}(M) @>\simeq>> {\Hecke}(M,\rho_M)\Mod. \\
\end{CD}
\end{equation}
In other words, normalized parabolic induction from $\RR_{\fraks_M}(M)$ to $\RR_{\fraks}(G)$ corresponds to 
$(\lambda_P) \ast$ under the equivalences of the theory of types. 
(Note although \cite{bushnell-kutzko:smooth} explicitly treats only unnormalized induction, 
it is a trivial matter to adjust the arguments so that they apply to normalized induction.)

\section{Proof of Theorem \ref{thm:sln} in the totally ramified case}
\label{sec:tot-ram}
We set $G = SL_n(k)$.
Let $T$ denote the standard split torus of diagonal elements in $G$ and 
$T^1$ the unique maximal compact subgroup of $T$.
We write 
\[
A = \set{(a_1, \cdots, a_n) \in \Z^n }{\sum a_i = 0}.
\]
Fix a uniformizer $\varpi$ in $k$.
Consider the map $a \mapsto \varpi^a \colon A \to T$ where 
\[
\varpi^a = \diag(\varpi^{a_1}, \ldots, \varpi^{a_n}), \quad\text{for}\quad a = (a_1, \ldots, a_n).
\] 
This splits the inclusion $T^1 \hookrightarrow T$, i.e.,
\begin{equation} \label{splitting}
(t_1, a) \mapsto t_1 \varpi^a \colon T^1 \times A \overset{\simeq}\longrightarrow  T
\end{equation}
is an isomorphism.
In this way, we can view characters of $T$ as pairs consisting of characters of $T^1$ and characters of $A$
(equivalently, unramified characters of $T$). 

Let $\omega \colon k^\times \rightarrow \C^\times$ be a character of order $n$ such that the restriction
of $\omega$ to $\OO^\times$, call it $\omega_\OO$, remains of order $n$, where 
$\OO$ denotes the ring of integers in $k$. Let $\chi$ be the character of $T^1$ given by 
\[
\chi(\diag(x_1, x_2, \ldots, x_n))
= \omega_\OO (x_1)  \omega_\OO (x_2)^2 \cdots  \omega_\OO (x_n)^n. 
\]
We are interested in the resulting Bernstein component $\RR_\chi(G)$.
The irreducible objects in this component consist of the irreducible 
subquotients of the family of induced representations $i_B^G (\chi \nu)$
as $\nu$ varies through the unramified characters of $T$
(and $B$ is any Borel subgroup containing $T$). 

We write $\RR_\chi(T)$ for the Bernstein component of $T$ determined by $\chi$.
The irreducible objects in $\RR_\chi(T)$ are simply the various
extensions of $\chi$ to $T$.
It is obvious that $(T^1, \chi)$ is a type for $\RR_\chi(T)$.
By \cite{goldberg-roche:sln-types}, there is a $G$-cover
$(K,\rho)$ of $(T^1, \chi)$ which is therefore a type for $\RR_\chi(G)$. 
If $\omega$ is trivial on $1+\varpi \OO$, then 
$K$ is the Iwahori subgroup of $\SL_n(k)$. If $\omega$ is trivial on $1+\varpi^n\OO$ but not on 
$1+\varpi^{n-1} \OO$, for $n > 1$, then
$$
K = N^-([(n+1)/2]) \cdot T(\OO) \cdot N([n/2]),
$$
where $[x]$ denotes the integral part of the rational number $x$; and $N(i)$
(resp.\ $N^-(i)$) denotes the group of upper- (resp.\ lower-) triangular
unipotent matrices with non-diagonal entries in $\varpi^i\OO$.
The restriction of $\rho$ 
to $T(\OO)$ is the character $(1,\omega,\cdots, \omega^{n-1})$.

We next describe the Hecke algebra ${\Hecke}(G,\rho)$ 
and the algebra embedding $\lambda_B:{\Hecke}(T, \chi) \to {\Hecke}(G,\rho)$
(for $B$ a fixed Borel containing $T$).

To simplify some formulas, we take convolution in ${\Hecke}(T,\chi)$
(resp.\ ${\Hecke}(G,\rho)$) with respect to the Haar measure that gives
$T^1$ (resp. $K$) unit measure. 
For $a \in A$, let $\phi_a$ denote the 
the unique function in ${\Hecke}(T,\chi)$
with support $T^1 \varpi^a$ such that $\phi_a(\varpi^a) = 1$.
The assignment $\phi_a \mapsto a$, for 
$a \in A$, clearly extends to a $\C$-algebra isomorphism ${\Hecke}(T,\chi) \simeq \C[A]$. 

\begin{prop}
\label{prop:sln-hecke}
Let $H = A \rtimes \Z/n$ where $\Z/n$ acts on $A$ by cyclic permutation of the co-ordinates.
Then there is a $\C$-algebra isomorphism
${\Hecke}(G,\rho)  \simeq    \C[H]$, the complex group algebra of $H$.  This fits into a commutative diagram 
\begin{equation}  \label{algs-sl}
\begin{CD}
{\Hecke}(G,\rho)    @>\simeq>>    \C[H]  \\
@A{\lambda_B}AA        @AAA   \\
{\Hecke}(T,\chi)       @>\simeq>>   \C[A] \\
\end{CD}
\end{equation}
in which the right vertical arrow is the obvious inclusion
and the bottom horizontal arrow is the isomorphism that sends $\phi_a$ to $a$, for $a \in A$. 
\end{prop}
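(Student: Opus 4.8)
The plan is to identify $\Hecke(G,\rho)$ concretely by producing a basis indexed by $H = A \rtimes \Z/n$ and then checking that the convolution structure matches that of $\C[H]$. First I would invoke the $G$-cover machinery from Section~\ref{sec:sln-prelim}: since $(K,\rho)$ is a $G$-cover of the type $(T^1,\chi)$ for the torus $T$, the embedding $\lambda_B$ and the structure of $\Hecke(G,\rho)$ are governed by the combinatorics of double cosets $K g K$ versus $T^1 t T^1$. The support set $I_G(\rho)$ is a union of double cosets $K n_w \varpi^a K$ where $a\in A$ and $w$ runs over a set of Weyl-group-like representatives; the key geometric input, which should come from \cite{goldberg-roche:sln-hecke}, is that $I_G(\rho) \cap T$ accounts for the subalgebra $\Hecke(G,\rho)_M \cong \C[A]$ (this is the general $G$-cover statement $I_M(\tau_M) = I_G(\tau)\cap M$ recalled in the excerpt), while the full intertwining set is generated over this by a single additional element corresponding to a cyclic element $c$ of order $n$ in $N_G(T)/T$ --- concretely the $n$-cycle $w$ acting on coordinates, since $\chi$ is stabilized (up to the $\Z/n$-action) precisely by this cyclic subgroup of $W = S_n$ because of the specific choice $\chi = \omega_\OO(x_1)\omega_\OO(x_2)^2\cdots\omega_\OO(x_n)^n$ and $\omega$ having order exactly $n$. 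So the first main step is: establish that $\dim \Hecke(G,\rho)$, as a module over $\C[A]$, is free of rank $n$, with basis given by elements $T_0, T_1, \ldots, T_{n-1}$ supported on $K\dot{c}^i\varpi^{?}K$.

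Next I would pin down the algebra relations. Write $\Theta$ for the generator corresponding to the cyclic element $c$. One must show $\Theta^n = 1$ (not $\Theta^n = q\cdot(\text{something})$, i.e.\ the relation is that of a \emph{group} algebra, not an affine Hecke algebra with parameters). This is where the ramification hypothesis on $\omega$ does its work: because $\chi$ restricted to $T^1$ is in ``general position'' with respect to all the affine reflections --- the stabilizer of $\chi$ in the affine Weyl group is exactly the cyclic group of order $n$, with no reflections fixing $\chi$ --- the Hecke algebra has no quadratic $(T_s - q)(T_s+1)$ relations at all; every generator is invertible with $\Theta^n$ a unit supported on the identity coset, which one normalizes to $1$. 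Concretely one computes the convolution $\Theta * \phi_a * \Theta^{-1} = \phi_{c\cdot a}$ directly from the cover's positivity properties (using that $\lambda_B$ on $\Hecke(T,\chi)^+$ is an algebra map and the strongly-positive element $\zeta_B$ conjugates correctly), which gives the semidirect-product relation. Combining: $\Hecke(G,\rho) = \bigoplus_{i=0}^{n-1} \C[A]\,\Theta^i$ with $\Theta^n = 1$ and $\Theta \phi_a \Theta^{-1} = \phi_{c a}$, which is exactly $\C[A \rtimes \Z/n] = \C[H]$. The commutativity of diagram \eqref{algs-sl} is then immediate: $\lambda_B$ is by construction the inclusion of $\Hecke(T,\chi)=\C[A]$ as the $\Theta^0$-summand, which matches the obvious inclusion $\C[A]\hookrightarrow\C[H]$, and the bottom arrow is $\phi_a\mapsto a$ by definition.

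The main obstacle, I expect, is the first step: verifying that the intertwining set $I_G(\rho)$ has the claimed size --- equivalently that $\dim_{\C[A]}\Hecke(G,\rho) = n$ and that the ``extra'' generator has order exactly $n$ rather than generating a larger or parameter-deformed algebra. This requires genuine input about the Goldberg--Roche types for $\SL_n$ (the two explicit forms of $K$ in the unramified-on-$1+\varpi\OO$ case versus the deeper totally ramified case must both be handled), specifically the computation of $I_G(\rho)$ and the support of an invertible generator; this is precisely the point on which the authors thank Alan Roche in the acknowledgments, so I would cite \cite{goldberg-roche:sln-hecke} for the determination of $\Hecke(G,\rho)$ up to isomorphism type and then match it with $\C[H]$ by the basis-and-relations bookkeeping above. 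Everything downstream --- the normalization of $\Theta$, the conjugation formula, the commutativity of \eqref{algs-sl} --- is then a routine unwinding of the $G$-cover formalism recalled in Section~\ref{sec:sln-prelim}.
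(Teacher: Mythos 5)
Your overall route is the same as the paper's: quote Goldberg--Roche for the structure of $\Hecke(G,\rho)$ (a copy of $\C[A]$ via $\lambda_B$, an extra invertible generator attached to the $n$-cycle, generation, and rank $n$), and then match relations with $\C[H]$. The paper indeed gets all of this from \cite{goldberg-roche:sln-hecke}*{Theorem 11.1}, which supplies $\Phi_a\Phi_{a'}=\Phi_{a+a'}$, an element $\Phi_w$ with $\Phi_w^n=\Phi_0$, and generation by these elements. So the step you flag as the ``main obstacle'' (freeness of rank $n$ and the order of the extra generator) is exactly the part that is imported, as you propose.

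The genuine gap is in the step you dismiss as routine. What the cover formalism and support considerations give is only $\Phi_w\Phi_a\Phi_w^{-1}\doteq\Phi_{w(a)}$, i.e.\ equality up to a nonzero scalar: an element of $\Hecke(G,\rho)$ supported on a single double coset is determined only up to scalar, and positivity of $\lambda_B$ does not compute that scalar. Moreover the scalar cannot be normalized away by rescaling $\Theta=\Phi_w$, since conjugation is unchanged under $\Theta\mapsto\lambda\Theta$; a priori one only gets a possibly twisted form of $\C[A\rtimes\Z/n]$, and even if one tried to untwist by rescaling the $\Phi_a$ (a Hilbert-90-type argument on $\widehat A$), that would disturb the prescribed bottom isomorphism $\phi_a\mapsto a$ in the diagram, which is needed later to identify which Hecke modules correspond to which constituents. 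The paper's proof consists precisely of removing this scalar: it observes that $i_B^G(\chi)$ (with $\chi$ trivial on $A$) splits into $n$ distinct irreducibles, so the corresponding $n$-dimensional $\Hecke(G,\rho)$-module is a direct sum of one-dimensional modules, each $\phi_a$ acting trivially; this yields an algebra character $\Lambda\colon\Hecke(G,\rho)\to\C$ with $\Lambda(\Phi_a)=1$ for all $a$, and applying $\Lambda$ to the relation forces the scalar to be $1$. Your proposal contains no substitute for this representation-theoretic input, so as written it does not establish the proposition with the stated commutative diagram.
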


\begin{proof} 
Theorem~11.1 of  \cite{goldberg-roche:sln-hecke}  gives the following description of ${\Hecke}(G,\rho)$.   
First, for $a \in A$, we set $\Phi_a = \lambda_B(\phi_a)$ so that 
\[
\Phi_a \Phi_{a'} = \Phi_{a+a'},
\]
for all $a, a' \in A$. 
Writing $w$ for the cycle $(1 \, 2 \ldots n)$,
there is a special function $\Phi_w \in {\Hecke}(G,\rho)$ that satisfies
\begin{enumerate}
\item
$\Phi_w^n = \Phi_0$, the identity element of ${\Hecke}(G,\rho)$, 
\item
$\Phi_w \Phi_a \Phi_w^{-1} \doteq \Phi_{w(a)}$, for all $a \in A$.
\end{enumerate} 
Here $w$ acts on $A$ in the obvious way (by cyclic permutation of the co-ordinates),
and $\doteq$ denotes equality up to multiplication by scalars. 
(Note that it follows from (2) that the order of $\Phi_w$ is exactly $n$.) 
Finally, ${\Hecke}(G,\rho)$ is generated as a $\C$-algebra by $\Phi_w$ and the elements $\Phi_a$,
for $a \in A$. 

To prove the Proposition, we will show that (2) is actually an equality.
For this, we consider the induced representation $i_B^G(\chi)$ (viewing $\chi$ as
a character of $T$ that is trivial on $A$).
This decomposes as a sum of $n$ distinct irreducible subrepresentations. This can be seen by noting
the following:
\begin{itemize}
\item A unitary principal series representation of $\GL_n(k)$ is irreducible.
\item An irreducible admissible representation $\pi$ of $\GL_n(k)$ when restricted to $\SL_n(k)$ 
decomposes as a sum of a finite collection of irreducible representations whose cardinality is the same
as the cardinality of self-twists of $\pi$:
$$
\{ \alpha \colon k^\times \rightarrow \C^\times| \pi \otimes \alpha \cong \pi \}
= \{1,\omega,\cdots,\omega^{n-1}\}.
$$
\end{itemize}

We now appeal to the diagram (\ref{ind-types}).
The ${\Hecke}(G,\rho)$-module
that corresponds to $i_B^G(\chi)$ has dimension $n$ and so must
split as a sum of $n$ one-dimensional submodules. 
Note that each $\phi_a$, for $a \in A$,
acts trivially on the ${\Hecke}(T,\chi)$-module corresponding to the character $\chi$ of $T$.
It follows easily that there is a $\C$-algebra homomorphism $\Lambda:{\Hecke}(G,\rho) \to \C$ 
such that $\Lambda(\Phi_a) = 1$, for all $a \in A$ (in fact, there are $n$ such homomorphisms). 
Applying $\Lambda$ to (2), we see that (2) must be an equality.
\end{proof}

Combining (\ref{algs-sl})
(or more properly the diagram induced by (\ref{algs-sl}) on module categories) and (\ref{ind-types}),
we obtain a
commutative diagram of functors (up to equivalence)
\begin{equation} \label{key}
\begin{CD}
\RR_\chi(G)   @>\simeq>>    \C[H]\Mod  \\
@A{i_B^G}AA                               @AA{i}A  \\
\RR_\chi(T)    @>\simeq>>   \C[A]\Mod. \\
\end{CD}
\end{equation}
Explicitly,
if $M$ is a $\C[A]$-module, then $i(M) = \Hom_{\C[A]}(\C[H], M)$ where, as above,
the $\C[H]$-action is given by right translations. 
Let $\nu$ be an unramified character of $T$ viewed as a character of $A$ via $a \mapsto \nu(\varpi^a)$.
The bottom horizontal arrow takes the object $\chi \nu$ in $\RR_\chi(T)$
to the simple $\C[A]$-module $\C_\nu$ corresponding to $\nu$. 

We are interested in a particular family of induced representations in $\RR_\chi(G)$.
To describe this family, let $\omega$ be an $n$-th root of unity and write
$\nu_\omega$ for the unramified character of $T$ given by 
\[
\nu_\omega(\varpi^a) = \omega^{a_1} \omega^{2a_2}  \cdots \omega^{n a_n}, 
\]
for $a = \diag(a_1, \ldots, a_n)$.
To simplify the notation,
we write $\C_\omega$ in place of $\C_{\nu_\omega}$ for the $\C[A]$-module corresponding to $\nu_\omega$.
By (\ref{key}),
the induced representation $i_B^G(\chi \nu_\omega)$ corresponds to the $\C[H]$-module $i(\C_\omega)$.

Observe that $\C_\omega$ is fixed under the action of $\Z/n$ on $A$ (by cyclic permutations). Indeed,
\begin{align*}
(k_n,k_1,\ldots, k_{n-1}) &\mapsto
\omega^{k_n} \omega^{2k_1} \cdots  \omega^{nk_{n-1}} \\
&= 
\omega^{k_1} \omega^{k_2} \cdots  \omega^{k_n}
\bigl(
\omega^{k_1} \omega^{2k_2} \cdots  \omega^{nk_n}
\bigr) \\
&=
\omega^{k_1} \omega^{2k_2} \cdots  \omega^{nk_n}.
\end{align*}
It follows that for any character $\eta:\Z/n \to \C^\times$
the $\C[A]$-module $\C_\omega$ extends to a $\C[H]$ module 
$\C_{\omega, \eta}$ in which $\Z/n$ acts by $\eta$ and
\[
i(\C_\omega) = \bigoplus_\eta \C_{\omega, \eta}, 
\]
as $\eta$ varies through the distinct characters of $\Z/n$.

To finish, we therefore need to determine $\Ext^i_{\C[H]}(\C_{\omega, \eta}, \C_{\omega, \eta'})$
for all characters $\eta, \eta'$ of $\Z/n$. We have 
\begin{align*}
\Ext^i_{\C[H]}(\C_{\omega, \eta}, \C_{\omega, \eta'})
&\simeq  \Ext^i_{\C[H]}(\C_{1,1}, \C_{1, \eta' \eta^{-1}}) \\
&= H^i(H, \C_{1, \eta' \eta^{-1}}).
\end{align*}
Of course,
$\C_{1, \eta' \eta^{-1}}$ is a character of $H$ that is trivial on $A$.
To compute these cohomology groups, we use the following 
general result.

\begin{lemma}
\label{lem:cohom-normal}
Let $N$ be a finite-index normal subgroup of a group $G$, and 
$V$ a $\C[G]$-module.
Then
$$
H^i(G,V) \cong H^i(N,V)^{G/N}.
$$
\end{lemma}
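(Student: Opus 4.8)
The plan is to invoke the Lyndon--Hochschild--Serre spectral sequence
$$E_2^{p,q} = H^p(G/N,\, H^q(N,V)) \Longrightarrow H^{p+q}(G,V),$$
which is available for an arbitrary group $G$ and an arbitrary normal subgroup $N$, with no finiteness hypotheses. The entire content of the lemma is that, under our hypotheses, this spectral sequence degenerates at $E_2$ along the single column $p=0$. Since $V$ is a $\C[G]$-module it is in particular a $\C$-vector space, and hence so is each $H^q(N,V)$; thus each $H^q(N,V)$ is a module over the group algebra $\C[G/N]$ of the \emph{finite} group $G/N$, and $|G/N|$ is invertible in $\C$. A standard averaging argument (equivalently, the norm element) shows that for a finite group $Q$ and any $\C[Q]$-module $W$ one has $H^p(Q,W)=0$ for all $p\geq 1$; applying this with $Q=G/N$ gives $E_2^{p,q}=0$ whenever $p\geq 1$.

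With only the column $p=0$ surviving, every differential vanishes, so $E_\infty = E_2$, and the induced filtration on $H^i(G,V)$ has exactly one nonzero graded piece, namely $E_\infty^{0,i}=E_2^{0,i}=H^0(G/N, H^i(N,V))=H^i(N,V)^{G/N}$. This yields the asserted isomorphism, realized concretely by the edge map, which is the restriction homomorphism $H^i(G,V)\to H^i(N,V)$ whose image is precisely the subspace of $G/N$-invariants.

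An alternative route, which avoids spectral sequences, uses the transfer. Restriction $\mathrm{res}\colon H^i(G,V)\to H^i(N,V)$ and corestriction $\mathrm{cor}\colon H^i(N,V)\to H^i(G,V)$ satisfy $\mathrm{cor}\circ\mathrm{res}=[G:N]\cdot\mathrm{id}$, and, because $N$ is normal, the Mackey formula gives $\mathrm{res}\circ\mathrm{cor}=\sum_{g\in G/N} g_\ast$ as an endomorphism of $H^i(N,V)$. Since $[G:N]$ is invertible in $\C$, the first identity makes $\mathrm{res}$ injective; its image lies in $H^i(N,V)^{G/N}$ because inner automorphisms act trivially on group cohomology, so $\mathrm{res}$ is $G/N$-equivariant for the trivial action on the source; and for a $G/N$-invariant class $\alpha$ the element $[G:N]^{-1}\mathrm{cor}(\alpha)$ restricts to $[G:N]^{-1}\sum_{g}g_\ast\alpha=\alpha$, giving surjectivity onto the invariants.

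There is no real obstacle here: beyond formal properties of group cohomology, the only input is the vanishing $H^{\geq 1}(G/N,W)=0$ for $\C$-vector spaces $W$, which is elementary. The single point that warrants a moment's care is that the identification is canonical, given by the edge (restriction) map; this matters only if one wants naturality in $V$, and for the application in the following sections the abstract isomorphism suffices.
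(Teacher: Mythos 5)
Your argument is correct and is essentially the paper's own proof: the paper likewise invokes the Lyndon--Hochschild--Serre spectral sequence and the vanishing of positive-degree cohomology of the finite quotient $G/N$ with coefficients in a $\C$-vector space to conclude $H^i(G,V)\cong H^i(N,V)^{G/N}$. The transfer/corestriction route you sketch is a fine alternative, but the primary argument coincides with the paper's.
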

\begin{proof}
This follows from the spectral sequence which calculates cohomology of
$G$ in terms of that of $N$ after we have noted that since $G/N$ is finite,
it has no cohomology in positive degree for a coefficient system which is a $\C$-vector space.
\end{proof}

\begin{cor}
\label{lem:shapiro}
Let $N$ be a normal subgroup of a group $G$ of finite index. 
Let $\tau$ be a finite-dimensional complex 
representation of $G$ on which $N$ operates trivially.  Then,
$$
H^i(G,\tau)
\cong
\left [ H^i(N, \C)\otimes \tau \right ]^G.
$$
\end{cor}
\begin{proof} 
By the previous lemma,
\begin{equation*}
H^i(G,\tau)
\cong   H^i(N,\tau)^{G/N} 
\cong  [H^i(N, \C) \otimes \tau]^G 
.
\qedhere
\end{equation*}
\end{proof}

This corollary allows us to calculate $H^i(H,\C_{1,\eta})$
as follows.

\begin{cor}
For a character 
$\eta \colon \Z/n \longrightarrow \C^\times$,
$$
H^i(H,\C_{1,\eta}) =  \Lambda^i(A^\vee \otimes \C)[\eta],
$$
where $\Lambda^i(A^\vee \otimes \C)[\eta],$ is the $\eta$-isotypic component of 
$\Lambda^i(A^\vee \otimes \C),$ for the action of $\Z/n$ as cyclic 
permutations on $A$.
\end{cor}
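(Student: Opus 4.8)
The plan is to apply Corollary \ref{lem:shapiro} with $G = H = A \rtimes \Z/n$ and $N = A$, noting that $A$ is a free abelian group of rank $n-1$ on which $\Z/n$ acts by cyclic permutation of coordinates, and that $\C_{1,\eta}$ is by construction a one-dimensional representation of $H$ on which $N = A$ acts trivially. Thus Corollary \ref{lem:shapiro} immediately gives
$$
H^i(H, \C_{1,\eta}) \cong \left[ H^i(A, \C) \otimes \C_{1,\eta} \right]^H
\cong \left[ H^i(A, \C) \otimes \C_\eta \right]^{\Z/n},
$$
the last step because $A$ acts trivially on both factors, so the $H$-invariants reduce to $\Z/n$-invariants, where $\Z/n = H/A$ acts on $H^i(A,\C)$ through its action on $A$ and on $\C_\eta$ through $\eta$.

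The next step is to identify $H^i(A,\C)$ as a $\Z/n$-module. Since $A \cong \Z^{n-1}$ is a free abelian group, its group cohomology with complex coefficients is the cohomology of the torus $(\C^\times)^{n-1}$, or equivalently computed by the Koszul complex: $H^i(A, \C) \cong \Lambda^i(\Hom(A,\C)) = \Lambda^i(A^\vee \otimes \C)$, naturally as a module over $\mathrm{Aut}(A)$, hence in particular as a $\Z/n$-module for the cyclic permutation action. (One can see this $\Z/n$-equivariantly either from the standard free resolution of $\Z$ over $\C[A]$, which is the Koszul complex on $A \otimes \C$ and is manifestly functorial in $A$, or by identifying $H^\bullet(A,\C)$ with the de Rham cohomology of the compact torus $A^\vee \otimes \R / A^\vee$.)

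Combining the two steps, $H^i(H,\C_{1,\eta}) \cong [\Lambda^i(A^\vee \otimes \C) \otimes \C_\eta]^{\Z/n}$, and taking $\Z/n$-invariants of $\Lambda^i(A^\vee \otimes \C) \otimes \C_\eta$ is exactly the same as extracting the $\eta^{-1}$-isotypic component of $\Lambda^i(A^\vee \otimes \C)$; since $\Z/n$ is abelian and we are free to pass between $\eta$ and $\eta^{-1}$ by dualizing, this matches the claimed $\Lambda^i(A^\vee \otimes \C)[\eta]$ under the conventions fixed above (one should check the normalization of the isotypic-component notation is consistent with the pairing introduced before Theorem \ref{thm:sln}; this is purely bookkeeping).

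There is no real obstacle here: both inputs — Corollary \ref{lem:shapiro} and the Koszul computation of $H^\bullet(\Z^{n-1},\C)$ — are standard, and the only thing requiring a moment's care is that the $\Z/n$-action in the Koszul (or de Rham) identification is the natural one coming from permuting coordinates of $A$, which is automatic from the functoriality of the construction. The mildly delicate point, if any, is matching the direction of the character (invariants under $\eta$ versus $\eta^{-1}$-isotypic component) with the identification of $\widehat{\Z/n}$-characters with elements of $k^\times/\ker(\omega)$ used elsewhere, so that the final statement of Theorem \ref{thm:sln} comes out with $ba^{-1}$ rather than $ab^{-1}$; this is resolved by tracking the pairing $(\chi,x) \mapsto \chi(x)$ through the chain of equivalences.
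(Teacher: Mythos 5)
Your argument is essentially the paper's own proof: apply Corollary \ref{lem:shapiro} with $N=A$ to reduce to $\left[H^i(A,\C)\otimes\C_\eta\right]^{\Z/n}$, then use that the cohomology of the free abelian group $A$ is the exterior algebra $\Lambda^\bullet(A^\vee\otimes\C)$ on its dual, equivariantly for the permutation action. The $\eta$-versus-$\eta^{-1}$ bookkeeping you flag is the same convention point present in the paper (whose proof indeed writes the invariants as the $\eta^{-1}$-isotypic component of $H^i(A,\C)$), and it is harmless since $A^\vee\otimes\C$ is the sum of all nontrivial characters of $\Z/n$.
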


\begin{proof}
From Corollary \ref{lem:shapiro},
we have that
$H^i(H,\C_{1, \eta}) = H^i(A,\C)[\eta^{-1}]$.
Since the cohomology of a free abelian group is the exterior algebra
on its dual,
the corollary follows.
\end{proof}

Theorem \ref{thm:sln} in the ramified case now follows from the fact that $A^\vee \otimes \C$ as 
a module for $\Z/n$ is the sum
of all nontrivial characters of $\Z/n$.

\section{Preliminaries on Iwahori-Hecke algebras}

Now suppose that $\omega$ is an unramified character of $k^\times$ of order $n$, and we are considering the 
principal series representation $\Ps(1,\omega,\cdots,\omega^{n-1})$ of $\GL_n(k)$ restricted to $\SL_n(k)$. 
In this case the corresponding
Hecke algebra governing the situation is the Iwahori-Hecke algebra, which we review below in greater 
generality than needed for the problem at hand.

Let $G$ be an unramified group, i.e., a quasi-split group over $k$ 
which splits over an unramified extension of $k$ with $\I$ as an Iwahori subgroup
of $G$, with $\I \subset K$,
a hyperspecial maximal compact subgroup of $G$.
Let $T$ be a maximal torus in $G$ which is maximally split, such that $T(\OO)\subset \I$. (Recall that
since $G$ is unramified, so is $T$, and hence it makes sense to speak of $T(\OO)$ which is the
maximal compact subgroup of $T$.)
Let $W = N(T)(k)/T(k)$ be the Weyl group associated to the torus $T$. Let $X_{*}(T)$ 
be the cocharacter group of $T$. Fix
a uniformizer $\varpi$ in $k$, and for a cocharacter $\mu$ of $T$, let $\varpi^\mu$ denote the image
of $\varpi$ in $T$ under the map $\mu \colon k^\times \rightarrow T$.
The map $\mu\rightarrow \varpi^\mu$ gives an
isomorphism of $X_{*}(T)$ 
with $T/T(\OO)$, and hence induces an isomorphism of 
the group ring $R=\Z[X_{*}(T)]$
with $\Hecke(T\doubleslash T(\OO))$.

We recall (from \cite{haines-kottwitz-aprasad:iwahori-hecke})
that according to the Bernstein presentation of the Iwahori-Hecke algebra
$\Hecke(\I) = \Hecke(G\doubleslash \I)$,
there is the subalgebra $\Hecke(T\doubleslash T(\OO))$ generated as a vector space 
by the elements
$\I \varpi^\mu \I $ for $\mu$ in the set of coweights; 
the multiplication is
$(\I \varpi^\mu \I) (\I \varpi^\nu \I) = \I \varpi^{\mu + \nu} \I$
for $\mu$ and $\nu$ dominant coweights. 
The algebra $\Hecke(T\doubleslash T(\OO))$ 
is a Laurent polynomial algebra $\Z[X_{*}(T)]$.
There is also the subalgebra
$\Hecke(K\doubleslash \I)$ of the Iwahori Hecke algebra consisting 
of $\I$-bi-invariant functions on $G$ with support in $K$.
The natural map
$$
\Hecke(K\doubleslash \I) \otimes \Hecke(T\doubleslash T(\OO)) \longrightarrow \Hecke(\I)
$$
is an isomorphism of vector spaces.
In particular, $\Hecke(\I)$ is a free
module over $R= \Hecke(T\doubleslash T(\OO))$ 
of rank equal to the order of $W$. Furthermore,
an irreducible representation of 
$\Hecke( \I)$, when restricted to the commutative 
subalgebra $\Hecke(T\doubleslash T(\OO))$, breaks up as a sum of characters 
of $\Hecke(T\doubleslash T(\OO))$, 
which are just unramified characters of $T$
which are all conjugate 
under the action of $W$.
Any character in this Weyl orbit of characters
of $T$
is an inducing character for the corresponding unramified principal series
representation of $G$ in which this representation of $\Hecke( \I)$
is contained; in particular, the unramified principal series representation $\Ps(1,\omega,\cdots, \omega^{n-1})$ 
defines a character $\nu_\omega$ of $R= \Hecke(T\doubleslash T(\OO))$.

It is known that
$R^W= \Hecke(T\doubleslash T(\OO))^W$ is the center of 
$\Hecke( \I)$, 
and that if $Q$ denotes the quotient field of $R$,
then the algebra $\Hecke(\I) \otimes _{R^W} Q^W = \Hecke(\I) \otimes _{R} Q$
is isomorphic to what is called a 
twisted group ring of $W$ over $Q$ with the natural action of $W$ on $R$ and hence on $Q$.
In fact, we do not need to invert all the nonzero
elements of $R$ to get to the twisted group ring, and inverting just one element,
$$
\delta = \prod(1-q^{-1} \varpi^{\alpha^\vee})
$$
(where $q$ is the order of the residue field of $k$,
and the product is over all coroots $\alpha^\vee$),
is sufficient.
Clearly $\delta$ is a
$W$-invariant element of $R$, so belongs to the center $Z = R^W$ of 
$\Hecke( \I)$. Note that $\delta$ is not invertible in $R$ as 
$R$ being a Laurent polynomial algebra,
the only invertible elements of $R$ are the monomials.

We now localize 
$\Hecke( \I)$, $R$, $Z$
at the central multiplicative set given by the powers of $\delta$. 
Write $\Hecke( \I)_\delta$, $R_\delta$, $Z_\delta$ for these localizations.
The algebra $\Hecke( \I)_\delta$ has a simple structure. In fact, 
$$
\Hecke( \I)_\delta = \oplus_{w \in W} R_\delta K_w,
$$
where the normalized intertwining operators $K_w$
are as described in
\cite{haines-kottwitz-aprasad:iwahori-hecke}*{\S2.2}.
Now $W$ acts naturally on $R$ and $R_\delta$ and we have
$$
K_w r = w(r) K_w \qquad \text{for all $r \in R$}.
$$
We also have
$$
K_wK_{w'} = K_{ww'};
$$
these equations determine the algebra structure on 
$\Hecke( \I)_\delta$, and prove that $\Hecke( \I)_\delta \cong R_\delta[W]$. 

Note that from the explicit form of $\delta$ given above, $\nu_\omega(\delta) \neq 0$, and hence 
the character $\nu_\omega$ of $R$ that we work with extends uniquely to $R_\delta$.
We continue to write $\nu_\omega$ for this extension to $R_\delta$.

\section{Proof of Theorem \ref{thm:sln} in the unramified case}

Let
$\omega \colon k^\times\longrightarrow \C^\times$ be an unramified 
character of order $n$.
Recall that the principal series representation
$\pi=\Ps(1,\omega,\cdots,\omega^{n-1})$ of $\GL_n(k)$ decomposes as a direct
sum $\pi = \sum_{\alpha} \pi_\alpha$
of $n$ irreducible admissible representations of $\SL_n(k)$ 
where $\alpha \in k^\times/\ker(\omega)$,
all of which have Iwahori-fixed vectors. Extensions between these
can therefore be determined through the Iwahori-Hecke algebra $\Hecke(\I)$ of $G$. 
Since the space of $\I$-invariants in a principal series representation of any split group,
in particular $\SL_n(k)$, has dimension equal to the order $|W|$ of the Weyl group $W$, 
the representations of the Iwahori-Hecke algebra corresponding
to any $\pi_\alpha$ is of dimension $(n-1)!$ (all being of equal dimension).
To justify this, 
we note that $\dim(\pi_\alpha^\I)$ is independent of $\alpha$ since:
\begin{enumerate}
\item $\GL_n(k)$ operates
transitively on the set of $\pi_\alpha$.

\item If $N(\I)$ denotes the normalizer of $\I$ in $\GL_n(k)$, then
$N(\I)\cdot \SL_n(k) = \GL_n(k)$
since $\I$ is normalized by an element of $\GL_n(k)$ whose determinant is a uniformizer of $k$.
For example, if $\I$ is the ``standard'' Iwahori subgroup, then one such element is
$$
\begin{pmatrix}
0 & 1  &  0 &0& \cdots& 0 \\
&  0 & 1 & \ddots  & \cdots & 0 \\
& &  0& \ddots & 0 &  0\\  
&&&\ddots & 1&  0 \\
&&&& 0& 1 \\
\varpi &&&&& 0
\end{pmatrix}.
$$
\end{enumerate}

Using the notation from the previous section, our context consists of the following 
chain of $\C$-algebras: $R^W \subset R \subset \Hecke(\I)$ where $R$ as well as $R^W$ are Laurent
polynomial algebras, and $R^W$ is the center of $\Hecke(\I)$ which we now abbreviate to $\Hecke$. 
We have two modules $M_1,M_2$ over $\Hecke$
which are of dimension
 $(n-1)!$ over $\C$ arising from two irreducible components of 
the principal series representation $\Ps(1,\omega,\omega^2,\cdots, \omega^{n-1})$ of $\GL_n(k)$ restricted to $\SL_n(k)$,
and we are interested in calculating:
$$
\Ext^i_\Hecke(M_1,M_2).
$$

From results of the previous section, we know that there is an element $\delta$ in $R^W$, such that
inclusion $R_\delta \subset \Hecke_\delta$ is the inclusion
$R_\delta \subset R_\delta[W]$. We also know from the previous section that the element $\delta$ acts
invertibly on $M_1$, and $M_2$, and therefore $M_1$ and $M_2$ can be considered as modules for
$\Hecke_\delta = R_\delta[W]$.
Since the inclusion $\Hecke \subset \Hecke_\delta$ is flat, generalities from homological algebra 
imply that:
$$
\Ext^i_{{\Hecke}}(M_1,M_2) \cong \Ext^i_{\Hecke_\delta}(M_1,M_2).
$$

Given the inclusion of the twisted group rings $R[W] \subset R_\delta[W]$, let 
$M_1'$, resp. $M_2'$, be the module $M_1$, resp. $M_2$, restricted to $R[W]$. Then we have,
$$
\Ext^i_{R[W]}(M_1',M_2') \cong \Ext^i_{R_\delta[W]}(M_1,M_2).
$$

The twisted group ring $R[W]$ is the group ring of $A \rtimes S_n$ where 
$$
A = \set{(k_1, \cdots, k_n) \in \Z^n }{\sum k_i = 0},
$$
on which there is the natural action of the symmetric group $S_n$. The modules
$M_1'$ and $M_2'$ can therefore be considered as irreducible representations, say
$M_1'',M_2''$ of $A\rtimes S_n$, and we have
$$
\Ext^i_{R[W]}(M_1',M_2')  \cong \Ext_{A\rtimes S_n}^i(M_1'',M_2'').
$$

Thus we are led to a question about extensions between representations of a group, which
in this case is $A\rtimes S_n$.
Such questions are well-known to be related to
cohomology of groups, using which
we will eventually be able to prove that
$$
\Ext_{A\rtimes S_n}^i(M_1'',M_2'')  \cong  \Ext_{A\rtimes \Z/n}^i(\chi_1,\chi_2)
$$
where $\Z/n$ is the cyclic group generated by the $n$-cycle $(1,2,\cdots, n)$ in $S_n$, and $\chi_1,\chi_2$
are characters of $A\rtimes \Z/n$ which extend the character
$\phi \colon (k_1,\ldots ,k_n) \mapsto \omega^{k_1} \omega^{2k_2} \cdots \omega^{nk_n}$
of $A$ with the property that,
\begin{eqnarray*}
M_1'' & = & \Ind_{A \rtimes \Z/n}^{A \rtimes S_n} \chi_1, \\
M_2'' & = & \Ind_{A \rtimes \Z/n}^{A \rtimes S_n} \chi_2.
\end{eqnarray*}

The existence of the characters $\chi_1,\chi_2$ with the above properties is a simple consequence
of Clifford theory since the character of $A$ being considered has stabilizer $\Z/n$ 
generated by the $n$-cycle $(1,2,\cdots,n)$ in $S_n$.

Thus our calculations made in \S\ref{sec:tot-ram} for the totally 
ramified case become available, proving Theorem \ref{thm:sln}.
To carry out this outline, we begin with some simple generalities.

\begin{lemma}
Let $G$ be a group, and $V$ a $\C[G]$-module.
Assume that there is 
an element $z$ of the center of $G$
which operates by a scalar $\lambda_z \not = 1$ on $V$. Then $H^i(G,V) =0$ 
for all $i \geq 0$.
\end{lemma}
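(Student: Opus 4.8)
The plan is to use the standard averaging argument for cohomology of a group with a central element acting nontrivially. First I would recall that $H^i(G,V)$ can be computed from the bar resolution, so a cohomology class is represented by a cocycle $f\colon G^{i+1}\to V$ (homogeneous cochains) or $G^i \to V$ (inhomogeneous), and the differential is the usual alternating sum. The key observation is that since $z$ is central in $G$, conjugation by $z$ is the identity automorphism of $G$, so the action of $z$ on $H^i(G,V)$ induced by functoriality (combining the trivial action on $G$ with the action of $z$ on $V$) is, on the one hand, multiplication by $\lambda_z$ (because $z$ acts on the coefficient module $V$ by the scalar $\lambda_z$), and on the other hand the identity map (because an inner automorphism of $G$ acts trivially on the cohomology of $G$ with coefficients in any $G$-module).

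More precisely, I would carry out the following steps. Step one: record the general fact that for $g \in G$, the map on $H^i(G,V)$ induced by the pair $(\mathrm{conj}_g \colon G\to G, \; v\mapsto g^{-1}v \colon V\to V)$ is the identity; this is the classical statement that inner automorphisms act trivially on group cohomology, proved by an explicit chain homotopy on the bar complex. Step two: apply this with $g = z$; since $z$ is central, $\mathrm{conj}_z = \mathrm{id}_G$, so the induced map is simply the map on $H^i(G,V)$ induced by the endomorphism $v \mapsto z^{-1}v$ of $V$, which since $z$ acts by $\lambda_z$ is multiplication by $\lambda_z^{-1}$. Step three: conclude that multiplication by $\lambda_z^{-1}$ equals the identity on $H^i(G,V)$, hence $(\lambda_z^{-1}-1)H^i(G,V)=0$; since $\lambda_z\neq 1$ and we are over $\C$, the scalar $\lambda_z^{-1}-1$ is invertible, forcing $H^i(G,V)=0$ for all $i\geq 0$.

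The main obstacle, such as it is, is simply citing or reproving cleanly the fact that inner automorphisms act trivially on group cohomology; this is standard (see e.g.\ Brown's book on cohomology of groups), and no subtlety arises here because everything is linear over $\C$. An even more elementary alternative, which avoids invoking functoriality, is a direct averaging: given an inhomogeneous $i$-cocycle $f$, the coboundary formula shows that $z$ acting diagonally on $G^i$ and on $V$ sends $f$ to a cohomologous cocycle, and since $z$ is central this action on $G^i$ is trivial, so $f \sim \lambda_z f$, giving $(1-\lambda_z)[f]=0$ and hence $[f]=0$. Either route is short; I would present the functorial version for brevity and remark that it is the familiar principle that a group acts trivially on its own cohomology via inner automorphisms.
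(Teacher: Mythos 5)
Your argument is exactly the paper's: both use the fact that the natural action of $g\in G$ on $H^i(G,V)$ (conjugation on $G$, $v\mapsto g^{-1}v$ on coefficients) is the identity, then apply it to the central element $z$ to get that multiplication by $\lambda_z^{-1}$ is the identity on $H^i(G,V)$, forcing vanishing since $\lambda_z\neq 1$. The proposal is correct and matches the paper's proof, which cites Serre's \emph{Local Fields} for the triviality of this action.
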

\begin{proof} The proof of this well known lemma depends on the observation that there
is a natural action of $G$ on $H^i(G,V)$ in which $g \in G$ acts on $G$ by 
conjugation, and which on coefficients $V$ acts by $v\rightarrow g^{-1}v$.
This action of $G$ on $H^i(G,V)$ is known to be the identity,
cf.\ Proposition 3 on page 116 of
\cite{serre:local-fields}.
On the other hand, the
element $z$ in the center of $G$ operates on $H^i(G,V)$ by $\lambda_z^{-1} \not = 1$, proving the lemma.
\end{proof}

Using this, we have:

\begin{prop}
\label{prop:triv-cohomology}
Let $A$ be a finitely-generated free abelian group on which
$\Z/n$ operates. Let $H = A \rtimes \Z/n$. Then for an 
irreducible finite-dimensional
complex representation $V$ of $H$,
$H^i(H,V) = 0$ unless $A$ acts trivially on $V$.
\end{prop}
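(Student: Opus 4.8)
The plan is to reduce the cohomology of $H = A \rtimes \Z/n$ with coefficients in an arbitrary irreducible finite-dimensional complex representation $V$ to the previous lemma, which kills cohomology whenever a central element acts by a nontrivial scalar. First I would observe that since $A$ is a finitely-generated free abelian group and $\Z/n$ is finite, $H$ is a finitely-generated group with a finite-index normal abelian subgroup $A$, so I can apply Lemma \ref{lem:cohom-normal}: $H^i(H,V) \cong H^i(A,V)^{H/A}$. Thus it suffices to show that $H^i(A,V) = 0$ when $A$ does not act trivially on $V$.

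Next I would use Clifford theory for the normal subgroup $A \triangleleft H$. Since $A$ is abelian, $V|_A$ decomposes as a direct sum of characters of $A$, and because $V$ is irreducible over $H$, the set of characters appearing is a single $\Z/n$-orbit, say $\{\psi, \psi^{g}, \ldots\}$ where $g$ generates $\Z/n$. Decomposing $H^i(A,V) = \bigoplus_{\text{characters } \psi' \text{ in the orbit}} H^i(A, \C_{\psi'})$, I reduce to computing $H^i(A, \C_\psi)$ for a character $\psi$ of the free abelian group $A$. The key point is then elementary: if $\psi$ is nontrivial, pick $a \in A$ with $\psi(a) \neq 1$; since $A$ is abelian, $a$ is central in $A$ and acts on $\C_\psi$ by the scalar $\psi(a) \neq 1$, so by the preceding lemma $H^i(A,\C_\psi) = 0$ for all $i$. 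Therefore $H^i(A,V) = 0$ as soon as some — equivalently every, since the orbit consists of characters of the same restriction type — character $\psi'$ in the orbit is nontrivial, i.e. as soon as $A$ does not act trivially on $V$. Taking $H/A$-invariants preserves this vanishing, giving $H^i(H,V) = 0$ under the stated hypothesis.

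I do not expect a serious obstacle here; the only mild subtlety is making sure the Clifford-theoretic decomposition of $V|_A$ into a single orbit of characters is stated cleanly, and that "$A$ acts trivially on $V$" is correctly identified with "the trivial character is among (equivalently, is the only one in) the orbit." An alternative, perhaps even shorter, route avoids Clifford theory entirely: apply Lemma \ref{lem:cohom-normal} directly and then note that $H^i(A,V)$, computed via the Koszul complex on $V$, is a subquotient of exterior powers of $A^\vee \otimes V$, but the cleanest argument really is the central-element trick applied inside $A$ after restriction, exactly as above.
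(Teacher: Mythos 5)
Your proof is correct and essentially matches the paper's: both arguments reduce, via Clifford theory and Lemma \ref{lem:cohom-normal}, to the vanishing of $H^i(A,\C_\psi)$ for a nontrivial character $\psi$ of the free abelian group $A$, which is exactly the central-element lemma applied to $G=A$. The only cosmetic difference is that the paper writes $V=\Ind_{H'}^{H}\chi$ for a character $\chi$ of the stabilizer $H'\supseteq A$ and invokes Shapiro's lemma, whereas you apply Lemma \ref{lem:cohom-normal} to $A\triangleleft H$ directly and decompose $V|_A$ into its single $\Z/n$-orbit of characters --- the same reduction packaged differently.
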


\begin{proof}
Note that by Clifford theory, the representation $V$ is
obtained as induction of a character $\chi$ 
of a subgroup $H'$ of $H$ containing $A$, i.e., $V = \Ind_{H'}^H \chi$.
By Shapiro's lemma, $H^i(H,V)= H^i(H',\chi)$. The proof is then clear by using the 
previous lemma (applied to $G=A$, an abelian group!) combined with
Lemma \ref{lem:cohom-normal}.
\end{proof}

We come now to the main proposition needed for our work.
Let
$$
A = \set{(k_1,\cdots, k_n) \in \Z^n }{\sum k_i = 0},
$$
on which there is the natural action of the symmetric group $S_n$ which contains
the $n$-cycle $(1,2,\cdots,n)$, so the group $\Z/n$ generated by this cycle too operates
on $A$. This allows one to construct groups $H=A \rtimes \Z/n$ and $\tilde{H}= A \rtimes S_n$.
Let 
$$
\phi \colon 
(k_1,\ldots ,k_n) \mapsto \omega^{k_1} \omega^{2k_2} \cdots \omega^{nk_n}.
$$
be the character of order $n$ of $A$ as before; as noted earlier, the character $\phi$ of $A$ 
is invariant under the cyclic permutation action of $\Z/n$ on $A$.

\begin{prop}
Let $\chi_1$ and $\chi_2$ be any two extensions of the character $\phi$ of 
$A$ to characters
of $H= A \rtimes \Z/n$. 
Call $M_1$, resp. $M_2$, the representation of $\tilde{H}= A \rtimes S_n$,
obtained by inducing the characters $\chi_1$, $\chi_2$ of $H$. Then 
$$
\Ext^i_{\tilde{H}}(M_1,M_2) \cong \Ext^i_H(\chi_1,\chi_2).
$$
\end{prop}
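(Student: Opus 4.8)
The statement asserts that inducing from $H = A\rtimes\Z/n$ to $\tilde H = A\rtimes S_n$ preserves $\Ext$-groups between the relevant modules. The natural tool is the relationship between induction, restriction, and $\Ext$ in the category of modules over a group algebra, together with Shapiro's lemma. First I would write
$$
\Ext^i_{\tilde H}(M_1, M_2) = \Ext^i_{\tilde H}\bigl(\Ind_H^{\tilde H}\chi_1,\, \Ind_H^{\tilde H}\chi_2\bigr).
$$
Since $[\tilde H : H] = |S_n|/n = (n-1)!$ is finite, induction from $H$ to $\tilde H$ is both left and right adjoint to restriction (induction equals coinduction for finite index). Using the adjunction on the first variable,
$$
\Ext^i_{\tilde H}\bigl(\Ind_H^{\tilde H}\chi_1,\, M_2\bigr) \cong \Ext^i_H\bigl(\chi_1,\, \operatorname{Res}_H M_2\bigr),
$$
so the problem reduces to understanding $\operatorname{Res}_H \Ind_H^{\tilde H}\chi_2$ as an $H$-module and extracting the contribution of $\chi_1$.

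**The key step: a Mackey-type decomposition.** By the Mackey formula, $\operatorname{Res}_H\Ind_H^{\tilde H}\chi_2$ decomposes as a sum over double cosets $H\backslash \tilde H / H$, with the summand indexed by $g$ being $\Ind_{H\cap {}^gH}^H({}^g\chi_2)$. Since $A$ is normal in $\tilde H$ and contained in $H$, and since $\Z/n = \mathrm{Stab}_{S_n}(\phi)$ is exactly the stabilizer of the character $\phi$ of $A$ under the $S_n$-action (this is where the hypothesis that $\phi$ has stabilizer the cyclic group generated by $(1,2,\dots,n)$ enters), the restriction of each summand to $A$ involves the $S_n$-orbit of $\phi$. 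The point is that ${}^g\chi_2$ restricted to $A$ is $\phi^g$, which equals $\phi$ only when $g$ normalizes the stabilizer in a way that fixes $\phi$ — i.e.\ essentially only for $g$ in (the $\Z/n$-part, or its normalizer). I would argue that the only double coset whose summand contains $\chi_1$ as a subquotient when restricted further — equivalently, the only summand on which $A$ can act by $\phi$ — is the one containing the identity, whose contribution is $\chi_2$ itself (or a sum of the extensions of $\phi$ to $H$). Then
$$
\Ext^i_H\bigl(\chi_1,\, \operatorname{Res}_H\Ind_H^{\tilde H}\chi_2\bigr) \cong \Ext^i_H(\chi_1,\chi_2),
$$
because $\Ext^i_H(\chi_1, -)$ kills every Mackey summand on which $A$ acts by a character $\neq \phi$ — indeed for such a summand $W$, $\operatorname{Res}_A W$ has no $\phi$-isotypic part, and since $\chi_1|_A = \phi$, a Lyndon–Hochschild–Serre (or the argument of Proposition \ref{prop:triv-cohomology}) shows $\Ext^i_H(\chi_1, W) = \Ext^i_A(\phi, \operatorname{Res}_A W)^{\text{something}} = 0$. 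More precisely, $\Ext^i_H(\chi_1, W) \cong \Ext^i_H(\C, \chi_1^{-1}\otimes W) = H^i(H, \chi_1^{-1}\otimes W)$, and $A$ acts on $\chi_1^{-1}\otimes W$ without trivial isotypic component, so the cohomology vanishes by the lemma preceding Proposition \ref{prop:triv-cohomology} applied to the normal subgroup $A$ together with Lemma \ref{lem:cohom-normal}.

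**The main obstacle.** The delicate point is bookkeeping of the Mackey double cosets and verifying carefully that, among all summands of $\operatorname{Res}_H\Ind_H^{\tilde H}\chi_2$, exactly the identity double coset contributes a module on which $A$ acts by $\phi$, and that its contribution is precisely $\chi_2$ (and not some larger module requiring a further projection). This rests on the fact that $\Z/n$ is the full stabilizer of $\phi$ in $S_n$: for $g\notin \Z/n \cdot (\text{normalizer adjustments})$, the character $\phi^g$ of $A$ differs from $\phi$, so $H\cap {}^gH \supseteq A$ acts on the corresponding induced module with no $\phi$-component after the twist by $\chi_1^{-1}$. I would also need to confirm that the "natural map" realizing $\Ext^i_H(\chi_1,\chi_2) \hookrightarrow \Ext^i_{\tilde H}(M_1,M_2)$ is exactly the inclusion of this identity-coset summand under the adjunction isomorphism, so that the resulting isomorphism is canonical and not merely an abstract equality of dimensions. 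Once the Mackey analysis is in place, the vanishing of the other summands is immediate from the cohomological lemmas already established, and the proof concludes.
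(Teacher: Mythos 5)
Your proposal is correct and follows essentially the same route as the paper: both reduce, via Frobenius reciprocity/Shapiro in the first variable, to $H^i\bigl(H,\chi_1^{-1}\otimes M_2|_H\bigr)$, identify the only piece on which $A$ acts trivially as the line $\chi_1^{-1}\chi_2$, and kill the remaining pieces using the vanishing of $H^i(H,V)$ when $A$ acts by nontrivial characters (the central-element lemma together with Lemma \ref{lem:cohom-normal}). The only cosmetic difference is that you organize the decomposition of $\operatorname{Res}_H M_2$ by Mackey's double coset formula, while the paper gets the same conclusion directly from the fact that $M_2|_A$ is the multiplicity-free sum of the $S_n$-conjugates of $\phi$ with stabilizer $\Z/n$; the identity-coset bookkeeping you flag as the main obstacle is indeed resolved exactly as you indicate.
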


\begin{proof}
We recall the generality that
$$
\Ext^i_{\tilde{H}}(M_1,M_2) \cong H^i(\tilde{H}, M_1^\vee \otimes M_2).
$$
Since $M_j = \Ind_H^{\tilde{H}} \chi_j$ 
(for $j=1,2$),
we have
$$
M_1^\vee \otimes M_2 \cong \Ind_H^{\tilde{H}} (\chi_1^{-1} \otimes M_2|_H).
$$
By Shapiro's lemma it follows that
$$
H^i(\tilde{H}, M_1^\vee \otimes M_2) = H^i(H, \chi_1^{-1} \otimes M_2|_H).
$$
Since the stabilizer of the character $\phi$ of $A$ is the group 
$H = A \rtimes \Z/n$, the restriction of the representation $M_2$ to $A$ consists 
of all {\it distinct} conjugates of the character $\phi$ under the symmetric group $S_n$ (with
$\Z/n$ as the isotropy of $\phi$).

Thus the part of the representation  $\chi_1^{-1} \otimes M_2|_H$ of $H$ on which $A$ acts
trivially is nothing but the one-dimensional representation $\chi_1^{-1} \chi_2$ of $H$.
By Proposition \ref{prop:triv-cohomology}
it follows that
$$
H^i(H, \chi_1^{-1} \otimes M_2|_H) = H^i(H,\chi_1^{-1} \chi_2).
$$
Again noting the generality
$$
\Ext^i_{H}(\chi_1,\chi_2) \cong H^i({H}, \chi_1^{-1} \chi_2),
$$
the proposition is proved.
\end{proof}

\section{A question of compatibility}
Theorem \ref{thm:sln} has been stated after fixing an arbitrary base point, called $\pi_1$, among
the irreducible components of the principal series representation $\Ps(1,\omega,\cdots,\omega^{n-1})$,
which gives rise to a parametrization of all components as
$(\pi_1)^{\langle e \rangle} = \pi_e$ for $e \in k^\times/\ker(\omega)$
by inner-conjugation action of $k^\times$ on $\SL_n(k)$. On the
other hand, the Hecke algebras, eventually identified to the group algebra of
$A \rtimes \Z/n$ in the ramified case, and of $A \rtimes S_n$ in the unramified case, give rise to their
own parametrizations.
The question arises: how do we relate these two very different looking 
parametrizations?

Recall that a character of $A$ determines an unramified principal series representation of $\SL_n(k)$.
Each such character is contained in an irreducible representation
of $A \rtimes \Z/n$.
When the character of $A$ has $n$ distinct conjugates under the action 
of $\Z/n$, one constructs this latter representation via induction to $A \rtimes \Z/n$,
and there are no choices to be made: the
character of $A$ uniquely determines the irreducible representation of $A \rtimes \Z/n$ to
which it belongs. 
However, in our case the character of $A$ is invariant under the action of $\Z/n$, so it
extends in $n$ distinct ways to
$A \rtimes \Z/n$.
These extended characters of $A \rtimes \Z/n$ are permuted transitively by 
multiplication by characters of $\Z/n$ since $\Z/n$ is a quotient
of $A\rtimes \Z/n$.

The following proposition answers the question of compatibility. We let $G=\SL_n(k)$ below.

\begin{prop}
\label{prop:compatibility}
For $e \in k^\times /\ker(\omega)$, the map $\chi \mapsto \chi(e)$
establishes an identification of the character group
of $\{1,\omega,\cdots, \omega^{n-1}\} = \Z/n$ with $k^\times /\ker(\omega)$. 
Fix an
irreducible summand $\pi_1$ of the principal series representation $\Ps(1,\omega,\cdots,\omega^{n-1})$ 
of $\SL_n(k)$.
For $\omega$ a ramified character, the corresponding character of the 
Hecke algebra $\Hecke(G,\rho)$, corresponds to a character ---call it $\chi_0$---
of $A \rtimes Z/n$.
Then the representation of $\Hecke(G,\rho)$ corresponding to the character $\chi_0 \cdot \chi$ 
of $A \rtimes \Z/n$ is the same as the one corresponding to $\pi_{e(\chi)}$.
In the unramified case,
if $\pi_1$ corresponds to $\Ind_{A \rtimes \Z/n}^{A\rtimes S_n} (\chi_0)$, then 
$\pi_{e(\chi)}$ corresponds to $\Ind_{A \rtimes \Z/n}^{A\rtimes S_n} (\chi_0 \cdot \chi)$.
\end{prop}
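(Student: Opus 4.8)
The plan is to realise both parametrizations of the $n$ irreducible summands of $\Ps(1,\omega,\cdots,\omega^{n-1})$ restricted to $\SL_n(k)$ as free transitive actions of the cyclic group $\Z/n\cong k^\times/\ker(\omega)$ on an $n$-element set, and then to match the two actions by checking that they agree on a single generator. On the representation side the action is conjugation by the embedded torus $\set{\diag(e,1,\dots,1)}{e\in k^\times}$, which (as recalled in the introduction) factors through $k^\times/\ker(\omega)$ and is free transitive on the $\pi_e$. On the Hecke side, twisting the modules by the $n$ characters of $\Z/n$ --- in the ramified case characters of $A\rtimes\Z/n$ trivial on $A$, in the unramified case inducing data $\Ind_{A\rtimes\Z/n}^{A\rtimes S_n}(\chi_j)$ --- is likewise free transitive. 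So it suffices to transport the conjugation functor $\pi\mapsto\pi^{g_e}$, $g_e=\diag(e,1,\dots,1)$, through the equivalences of \S\ref{sec:tot-ram} and of the unramified case, and to identify it, on one generator $e$, with twisting by the character $\chi$ of $\Z/n=\langle\omega\rangle$ determined by $\chi(\omega)=\omega(e)$.

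\emph{Ramified case.} Since $\omega|_{\OO^\times}$ has order $n$, the map $\OO^\times\to k^\times/\ker(\omega)$ is surjective, so it is enough to treat $e\in\OO^\times$. For such $e$ the element $g_e$ lies in $\GL_n(\OO)$, normalizes $K$ (each defining congruence condition is preserved), and acts trivially on $T(\OO)$; as $\rho$ is trivial on the unipotent parts of $K$ and equals $(1,\omega,\cdots,\omega^{n-1})$ on $T(\OO)$, one gets $\rho^{g_e}=\rho$, so conjugation by $g_e$ induces a $\C$-algebra automorphism $\alpha_e$ of $\Hecke(G,\rho)\cong\C[A\rtimes\Z/n]$ and the functor $\pi\mapsto\pi^{g_e}$ corresponds to twisting modules by $\alpha_e$. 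One computes $\alpha_e$ on the generators of Proposition \ref{prop:sln-hecke}: because $g_e$ commutes with every $\varpi^a$ and normalizes $K$, $\alpha_e(\Phi_a)=\Phi_a$; because $g_e^{-1}\dot w g_e=\dot w\cdot\diag(e,1,\dots,1,e^{-1})$ with $\diag(e,1,\dots,1,e^{-1})\in T(\OO)\subset K$, the function $\alpha_e(\Phi_w)$ is again supported on $K\dot w K$, hence $\alpha_e(\Phi_w)=\zeta(e)\Phi_w$ for a scalar $\zeta(e)$, which is an $n$-th root of unity since $\Phi_w^n=\Phi_0$. Evaluating at $\dot w$ and using $\rho|_{T(\OO)}=(1,\omega,\cdots,\omega^{n-1})$ gives $\zeta(e)=\omega(e)^{\pm1}$. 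Thus $\alpha_e$ is the twist by the character $\eta_e$ with $\eta_e|_A=1$ and $\eta_e(w)=\omega(e)^{\pm1}$, so the module attached to $\pi_e=\pi_1^{g_e}$ is $\chi_0\otimes\eta_e$; comparison with the pairing $(\omega^j,e)\mapsto\omega(e)^j$, after fixing the distinguished generator of $\Z/n=\langle\omega\rangle$ to be the $n$-cycle $w$, yields the asserted correspondence $\pi_{e(\chi)}\leftrightarrow\chi_0\cdot\chi$.

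\emph{Unramified case.} Here $\OO^\times\subseteq\ker(\omega)$; one first checks that conjugation by $g_e$, $e\in\OO^\times$, is the identity on isomorphism classes --- $g_e$ normalizes $\I$ and fixes every $\I$-double coset, hence induces an algebra automorphism of $\Hecke(\I)$ fixing each standard generator up to a scalar, and the quadratic relations force all these scalars to be $1$. So it is enough to treat $e=\varpi$, and $g_\varpi$ may be replaced (up to an element of $\SL_n(k)$ and an element of the embedded $\OO^\times$, which acts trivially) by the rotation element $s$ of the introduction, which normalizes $\I$, so the type formalism applies directly. Conjugation by $s$ is an algebra automorphism of $\Hecke(\I)$ fixing the central element $\delta$, hence descends to $\Hecke(\I)_\delta=R_\delta[W]$ and preserves the twisted group ring $R[W]=\C[A\rtimes S_n]$; the resulting automorphism is rotation of the affine Dynkin diagram (equivalently, conjugation by the length-zero element $s=\tau$ of the extended affine Weyl group), and on the modules $M_j''=\Ind_{A\rtimes\Z/n}^{A\rtimes S_n}(\chi_j)$ it acts by the free transitive action of $\langle\tau\rangle\cong\Z/n$, which is exactly twisting the $\chi_j$ by characters of $\Z/n$. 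As before, one then pins down the identification by checking it on the generator $\varpi\leftrightarrow s$.

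I expect the main obstacle to be the final bookkeeping in each case: verifying that the identification of $k^\times/\ker(\omega)$ with the character group of $\Z/n=\langle\omega\rangle$ produced by the Hecke-algebra computation is literally the pairing $(\omega^j,e)\mapsto\omega(e)^j$ and not its inverse --- which reduces to fixing compatible conventions for the distinguished generator ($\Phi_w$ versus $\Phi_{w^{-1}}$, the $n$-cycle versus its inverse) --- and, in the unramified case, carefully following the rotation element $\tau$ through the chain of localization and Morita-type equivalences relating $\Hecke(\I)$, $R_\delta[W]$, $R[W]$ and $\C[A\rtimes S_n]$.
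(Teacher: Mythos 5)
Your proposal is correct in outline but organizes the argument differently from the paper. The paper does not conjugate by explicit elements inside the $\SL_n$-Hecke algebra; instead it embeds $\Hecke(G,\rho)$ (resp.\ $\Hecke(\I)$) into a Hecke algebra on the $\GL_n$-side --- the type $(k^\times\cdot K,\rho')$ in the ramified case, and the Iwahori--Hecke algebras of $k^\times\cdot\SL_n(k)\subset\GL_n(k)$ in the unramified case --- identifies the relevant algebras with group algebras of $(A+\Delta\Z)\rtimes S_n\subset\Z^n\rtimes S_n$, and reads off the $k^\times/\ker(\omega)$-action from the exact sequence with quotient $\Z/n$ acting by conjugation, the key point being Lemma \ref{lem:fin-cyclic-gp-alg}: the generator of that quotient twists the extended character $\tilde\omega_0$ by $\omega$. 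Your ramified-case argument is genuinely different and arguably more self-contained: exploiting that $\OO^\times$ surjects onto $k^\times/\ker(\omega)$, you conjugate by a unit $\diag(e,1,\dots,1)$, which normalizes $K$ and fixes $\rho$, and you read the twist $\omega(e)^{\pm1}$ directly off the Goldberg--Roche generators $\Phi_a,\Phi_w$ of Proposition \ref{prop:sln-hecke}; this avoids the auxiliary $\GL_n$-type altogether. In the unramified case your route is essentially the paper's in different clothing: your length-zero element $\tau$ is the image of the generator of the paper's $\Z/n$-quotient, and the assertion that conjugation by $\tau$ on $\Ind_{A\rtimes\Z/n}^{A\rtimes S_n}(\chi_j)$ amounts to twisting $\chi_j$ by a character of $\Z/n$ is exactly the content of Lemma \ref{lem:fin-cyclic-gp-alg}, so it should be computed (conjugation by $t_{e_1}$ multiplies the value at $w^k$ by $\phi(w^k e_1-e_1)=\omega^k$) rather than merely asserted; likewise the claim that conjugation by $s$ transports, through $\Hecke(\I)\subset\Hecke(\I)_\delta\cong R_\delta[W]$, to conjugation by $\tau$ needs the standard but not-free compatibility of length-zero (diagram-rotation) elements with the Bernstein presentation --- note $s$ does not preserve dominance, so this is where care is required, and it is precisely the step the paper's comparison of affine Weyl groups sidesteps. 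The residual sign/direction bookkeeping you flag is left open at about the same level as in the paper, which also leaves the final details to the reader.
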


The proof of this proposition depends on the following simple lemma, whose proof is omitted.

\begin{lemma}
\label{lem:fin-cyclic-gp-alg}
Let $C$ be a finite cyclic group of order $n$, and $\omega$ a character $C\rightarrow \C^\times$. 
Then $\omega$ extends to a character $\tilde{\omega} \colon  \Z[C] \rightarrow \C^\times$ by sending an 
element $c$ of $C$ to $\omega(c)$.
The restriction of $\tilde{\omega}$ to the augmentation ideal $\Z[C]^0$ is invariant
under the translation action of $C$ on $\Z[C]^0$.
Thus it extends to a character, say $\tilde{\omega}_0$,
of $\Z[C]^0 \rtimes C$.
Since $\Z[C]^0 \rtimes C$ is a normal subgroup of $\Z[C] \rtimes C$, 
there is an action of $ [\Z[C] \rtimes C]/ [\Z[C]^0 \rtimes C] = \Z$ on $\Z[C]^0 \rtimes C$, and 
hence on its character group.
Under this action, the element $d \in \Z$ takes $\tilde{\omega}_0$ to $\tilde{\omega}_0 \cdot \omega^d$ 
where $\omega^d$ is a character of $C$ thought of as a character of $\Z[C] \rtimes C$.
\end{lemma}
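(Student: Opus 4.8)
The plan is to unwind every object into explicit coordinates on the group ring and then read off each assertion from a one-line computation in the semidirect product; the statement has no real content, so I will only flag where (minor) care is needed. Fix a generator $g$ of $C$, so that as an abelian group $\Z[C]$ is free on $e,g,\dots,g^{n-1}$, the augmentation $\epsilon\colon\Z[C]\to\Z$ sends $\sum a_i g^i$ to $\sum a_i$, and $\Z[C]^0=\ker\epsilon$. Since $\{g^i\}$ is a $\Z$-basis there is a unique group homomorphism $\tilde\omega\colon(\Z[C],+)\to\C^\times$ with $\tilde\omega(g^i)=\omega(g^i)$, namely $\sum a_i g^i\mapsto\prod_i\omega(g^i)^{a_i}=\omega(g)^{\sum_i ia_i}$; restrict it to $\Z[C]^0$.

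First I would check invariance. The generator $g$ acts on $\Z[C]$ by translation, $g\cdot\big(\sum a_i g^i\big)=\sum a_i g^{i+1}$ with exponents read mod $n$, so for $x=\sum a_i g^i$ one gets $\tilde\omega(g\cdot x)=\omega(g)^{\sum_i(i+1)a_i}=\tilde\omega(x)\cdot\omega(g)^{\epsilon(x)}$, which equals $\tilde\omega(x)$ as soon as $x\in\Z[C]^0$. Hence $\tilde\omega|_{\Z[C]^0}$ is fixed by the translation action of $C$, and then $\tilde\omega_0(x,c):=\tilde\omega(x)$ defines a character of $\Z[C]^0\rtimes C$: the multiplicativity $\tilde\omega_0\big((x_1,c_1)(x_2,c_2)\big)=\tilde\omega(x_1+c_1\cdot x_2)=\tilde\omega(x_1)\tilde\omega(x_2)$ is exactly the invariance just verified. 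This $\tilde\omega_0$ is the distinguished extension, the one that is trivial on $C$ (a general extension is this one times a pulled-back character of $C$).

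Next, normality and the quotient. Translation preserves $\epsilon$, so $\Z[C]^0$ is $C$-stable; thus $\Z[C]^0\rtimes C$ is a subgroup of $\Z[C]\rtimes C$, and it is normal because it is stable under conjugation by $\Z[C]$ --- again because $\epsilon(m-c\cdot m)=0$ --- and contains the subgroup $C$. The map $(x,c)\mapsto\epsilon(x)$ is a homomorphism $\Z[C]\rtimes C\to\Z$ with kernel exactly $\Z[C]^0\rtimes C$, which gives the stated identification of the quotient with $\Z$. Conjugation makes $\Z[C]\rtimes C$ act on $\Z[C]^0\rtimes C$; since inner automorphisms act trivially on $\Hom(\Z[C]^0\rtimes C,\C^\times)$, this descends to an action of $\Z$ on that character group, which is the action in the statement.

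Finally, the formula. Lift $1\in\Z$ to $\gamma:=(e,1)\in\Z[C]\rtimes C$, whose augmentation is $1$. A direct computation in the semidirect product gives, for $(x,c)\in\Z[C]^0\rtimes C$, that $\gamma^{-1}(x,c)\gamma=(x+c-e,\,c)$, with $c-e\in\Z[C]^0$; applying $\tilde\omega_0$, $(\gamma\cdot\tilde\omega_0)(x,c)=\tilde\omega(x)\,\tilde\omega(c-e)=\tilde\omega_0(x,c)\cdot\omega(c)$. Hence $\gamma\cdot\tilde\omega_0=\tilde\omega_0\cdot\omega$, where $\omega$ is regarded as a character of $\Z[C]\rtimes C$ via the projection $(x,c)\mapsto c$, and iterating, $d\in\Z$ carries $\tilde\omega_0$ to $\tilde\omega_0\cdot\omega^d$. \textbf{Main obstacle:} there is none of substance --- this is bookkeeping in a semidirect product. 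The only two points to watch are that the $\Z$-action is defined on the character group rather than on the nonabelian group $\Z[C]^0\rtimes C$ itself, and the orientation of the conjugation action, which is what decides whether the exponent is $+d$ or $-d$ (the opposite convention yields $\tilde\omega_0\cdot\omega^{-d}$).
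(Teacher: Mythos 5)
Your computation is correct: the explicit formula $\tilde\omega(g\cdot x)=\tilde\omega(x)\,\omega(g)^{\epsilon(x)}$ gives the $C$-invariance on the augmentation ideal, the augmentation map identifies the quotient with $\Z$, and the conjugation computation $\gamma^{-1}(x,c)\gamma=(x+c-e,c)$ yields exactly the asserted twist by $\omega^d$ (and, as you note, the formula is insensitive to which extension $\tilde\omega_0$ of $\tilde\omega|_{\Z[C]^0}$ one fixes, since the pulled-back factor from $C$ is untouched by the conjugation). The paper explicitly omits the proof of this lemma, calling it simple, and your argument is precisely the routine bookkeeping the authors had in mind; your remark about the sign convention ($\omega^d$ versus $\omega^{-d}$, depending on the choice of lift and orientation of the conjugation action) is the only point of ambiguity in the statement itself and is harmless for its application.
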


\begin{proof}[Proof of Proposition \ref{prop:compatibility}]
In both the ramified and unramified cases, we will embed our Hecke algebra $\Hecke(G,\rho)$ for $\SL_n(k)$ 
into a similar Hecke algebra
for $\GL_n(k)$.

In the case where $\omega$ is totally ramified, the type $(K,\rho)$ for $\SL_n(k)$ has a natural
variant for $\GL_n(k)$ with the type $(k^\times\cdot K, \rho')$, 
where $\rho'$ is the extension
of the representation $\rho$ of $K$ to $k^\times \cdot K$ by using the central character of the
principal series representation $\pi$ on $k^\times$.

In the case where $\omega$ is unramified,
consider the chain of groups $\SL_n(k) \subset k^\times \cdot \SL_n(k) \subset \GL_n(k)$,
and the corresponding Iwahori subgroups
$\I \subset \OO^\times\cdot \I \subset \tilde\I$.
We can embed the Iwahori algebra $\Hecke(\I)$ of $\SL_n(k)$
into the analogous algebra $\Hecke( k^\times \cdot \SL_n(k)  \doubleslash \OO^\times\cdot \I)$.
We will then compare this latter Hecke algebra with 
the Iwahori-Hecke algebra $\Hecke(\tilde\I)$
of $\GL_n(k)$.

Recall that instead of considering $\Hecke(\I)$ we are considering $\Hecke(\I)_\delta$,
obtained by inverting an element $\delta$
of its center, which can be related
to the group algebra of $A \rtimes S_n$.
A similar assertion for $\GL_n(k)$ allows one to turn
questions on Hecke algebras for $\GL_n(k)$ to one on affine Weyl group for $\GL_n(k)$.

The affine Weyl groups for $k^\times \cdot \SL_n(k)$ and $\GL_n(k)$,
parametrizing the double cosets 
$$
\I \backslash (k^\times \cdot \SL_n(k)) /{\OO}^\times\cdot \I
\qquad\text{and}\qquad
\tilde{\I} \backslash \GL_n(k)/ \tilde{\I},
$$
respectively, can be identified with
$$
(A+ \Delta\Z) \rtimes S_n
\qquad\text{and}\qquad
\Z^n \rtimes S_n,
$$
respectively,
where $\Delta \Z$ denotes the image of $\Z$ under the diagonal embedding
$\Delta \colon \Z \longrightarrow \Z^n$.
Consider the short exact sequence
$$
0\longrightarrow (A + \Delta \Z) \rtimes S_n
\longrightarrow \Z^n \rtimes S_n \longrightarrow \Z/n \longrightarrow 0, \eqno{(*)}
$$
with the natural map from $\Z^n \rtimes S_n$ to $\Z$ being the sum of co-ordinates on $\Z^n$.
Thus there is a natural action of $\Z^n \rtimes S_n$ on $ A \rtimes S_n$ via inner-conjugation,
hence on irreducible representations of $A\rtimes S_n$ 
by inner-conjugation, giving rise to
an action of $\Z/n$ on irreducible representations of $A\rtimes S_n$.

The proof of the proposition in the unramified case now follows from Lemma \ref{lem:fin-cyclic-gp-alg},
applied to the 
exact sequence
$$
0\longrightarrow (A + \Delta \Z) \rtimes \Z/n \longrightarrow \Z^n \rtimes \Z/n 
\longrightarrow \Z/n \longrightarrow 0,
$$
which is the restriction of the exact sequence $(*)$ to the subgroup $\Z/n$ inside $S_n$.
We leave the details, as well as the case of ramified character, to the reader.
We only add that in the ramified case 
one identifies the Hecke algebra for $\GL_n(k)$ for the type
$(k^\times\cdot K, \rho')$ mentioned earlier in the section 
to the group algebra of $\Z^n \rtimes \Z/n$
such that the previous short exact sequence applies, and together with Lemma \ref{lem:fin-cyclic-gp-alg},
gives the proof of the
proposition.
\end{proof}

\begin{bibdiv}
\begin{biblist}

\bib{arthur:elliptic-tempered}{article}{
  author={Arthur, James},
  title={On elliptic tempered characters},
  journal={Acta Math.},
  volume={171},
  date={1993},
  number={1},
  pages={73\ndash 138},
  issn={0001-5962},
  review={\MR {1237898 (94i:22038)}},
}

\bib{bernstein-zelevinskii2}{article}{
  author={Bern{\v {s}}te{\u \i }n, I. N.},
  author={Zelevinski{\u \i }, A. V.},
  title={Induced representations of the group $GL(n)$ over a $p$-adic field},
  language={Russian},
  journal={Funkcional. Anal. i Prilo\v zen.},
  volume={10},
  date={1976},
  number={3},
  pages={74\ndash 75},
  issn={0374-1990},
  review={\MR {0425031 (54 \#12989)}},
}

\bib{bushnell-kutzko:smooth}{article}{
  author={Bushnell, Colin J.},
  author={Kutzko, Philip C.},
  title={Smooth representations of reductive $p$-adic groups: structure theory via types},
  journal={Proc. London Math. Soc. (3)},
  volume={77},
  date={1998},
  number={3},
  pages={582\ndash 634},
  issn={0024-6115},
  review={\MR {1643417 (2000c:22014)}},
}

\bib{casselman:frobenius}{article}{
author={Casselman, W. },
title={A new nonunitarity argument for $p$-adic representations.},
journal ={J. Fac. Sci. Univ. Tokyo Sect. IA Math.},
volume = { 28},
date={1981},
number={3},
pages={ 907 \ndash 928} ,
review = {\MR{ 0656064 (1984e:22018)}},
}

\bib{casselman:book}{article}{ 
   author={Casselman, W.},
   title={Introduction to the theory of admissible
                representations of $p$-adic reductive groups},
   status={preprint},
}

\bib{dat}{article}{
author={Dat, Jean-Fran{\c c}ois},
title={Espaces sym\'etriques de Drinfeld et correspondence de Langlands locale},
journal={Ann. Scient. \'Ec. Norm. Sup.},
volume={39},
number={1},
year={2006},
pages={1\ndash74},
}

\bib{goldberg-roche:sln-types}{article}{
  author={Goldberg, David},
  author={Roche, Alan},
  title={Types in ${\rm SL}\sb n$},
  journal={Proc. London Math. Soc. (3)},
  volume={85},
  date={2002},
  number={1},
  pages={119\ndash 138},
  review={\MR {1901371 (2003d:22018)}},
}
\bib{goldberg-roche:sln-hecke}{article}{
  author={Goldberg, David},
  author={Roche, Alan},
  title={Hecke algebras and ${\rm SL}\sb n$-types},
  journal={Proc. London Math. Soc. (3)},
  volume={90},
  date={2005},
  number={1},
  pages={87\ndash 131},
  issn={0024-6115},
  review={\MR {2107039 (2005i:22017)}},
}

\bib{haines-kottwitz-aprasad:iwahori-hecke}{article}{
  author={Haines, Thomas J.},
  author={Kottwitz, Robert E.},
  author={Prasad, Amritanshu},
  title={Iwahori-Hecke algebras},
  journal={J. Ramanujan Math. Soc.},
  volume={25},
  date={2010},
  number={2},
  pages={113--145},
  issn={0970-1249},
  review={\MR {2642451 (2011e:22022)}},
  eprint={arXiv:math/0309168},
}

\bib{howe-tan:sl2}{book}{
  author={Howe, Roger},
  author={Tan, Eng-Chye},
  title={Nonabelian harmonic analysis},
  subtitle={Applications of ${\rm SL}(2,{\bf R})$},
  series={Universitext},
  publisher={Springer-Verlag},
  place={New York},
  date={1992},
  pages={xvi+257},
  isbn={0-387-97768-6},
  review={\MR {1151617 (93f:22009)}},
}

\bib{kazhdan:cuspidal}{article}{
  author={Kazhdan, David},
  title={Cuspidal geometry of $p$-adic groups},
  journal={J. Analyse Math.},
  volume={47},
  date={1986},
  pages={1\ndash 36},
  issn={0021-7670},
  review={\MR {874042 (88g:22017)}},
}

\bib{jkim:exhaustion}{article}{
   author={Kim, Ju-Lee},
   title={Supercuspidal representations: an exhaustion theorem},
   journal={J. Amer. Math. Soc.},
   volume={20},
   date={2007},
   number={2},
   pages={273--320 (electronic)},
   issn={0894-0347},
   review={\MR{2276772}},
}

\bib{morris:intertwining}{article}{
   author={Morris, Lawrence},
   title={Tamely ramified intertwining algebras},
   journal={Invent. Math.},
   volume={114},
   date={1993},
   number={1},
   pages={1--54},
}
         
\bib{morris:G-types}{article}{
   author={Morris, Lawrence},
   title={Level zero $\bf G$-types},
   journal={Compositio Math.},
   volume={118},
   date={1999},
   number={2},
   pages={135--157},
}

\bib{moy-prasad}{article}{
author={Moy, A.},
author={Prasad, G.},
title={Jacquet functors and unrefined minimal K-types},
journal={Comment. Math. Helvetici},
volume={71},
pages={98\ndash121},
date={1996},
}

\bib{opdam-solleveld:extensions}{article}{
  author={Opdam, Eric},
  author={Solleveld, Maarten},
  title={Extensions of tempered representations},
  date={2011-05-19},
  eprint={arXiv:1105.3802},
}

\bib{orlik}{article}{
author={Orlik, Sascha},
title={On extensions of generalized Steinberg representations},
journal={J. Algebra},
volume={293},
number={2},
year={2005},
pages={611\ndash630},
}

\bib{bessel}{article}{
  author={Prasad, Dipendra},
  author={Takloo-Bighash, Ramin},
  title={Bessel models for $\GSp(4)$},
journal= {J. reine angew. Math.},
volume= {655},
pages={189\ndash243},
date={2011},

}

\bib{savin}{article}{
author={Savin, Gordan},
title={A tale of two Hecke algebras},
eprint={arXiv:1202.1486},
date={2012},
}

\bib{schneider-stuhler:sheaves}{article}{
  author={Schneider, Peter},
  author={Stuhler, Ulrich},
  title={Representation theory and sheaves on the Bruhat--Tits building},
  journal={Inst. Hautes \'Etudes Sci. Publ. Math.},
  number={85},
  date={1997},
  pages={97\ndash 191},
  issn={0073-8301},
  review={\MR {1471867 (98m:22023)}},
}

\bib{serre:local-fields}{book}{
    author={Serre, Jean-Pierre},
     title={Local fields},
    series={Graduate Texts in Mathematics},
    volume={67},
 publisher={Springer--Verlag},
     place={New York},
      date={1979},
     pages={viii+241},
      isbn={0-387-90424-7},
    review={\MR {554237 (82e:12016)}},
  language={English}
}

\bib{stevens:classical-sc}{article}{
    author={Stevens, Shaun},
     title={The supercuspidal representations of $p$-adic
	classical groups},
    journal={Inventiones Math.},
	volume={172},
	pages={289\ndash 352},
	year={2008},
    eprint={arXiv:math.RT/0607622},
}

\bib{vigneras:extensions}{article}{
author={Vign{\'e}ras, Marie-France},
title={Extensions between irreducible representations of a $p$-adic $GL(n)$},
journal={Pacific J. Math},
volume={181},
number={3},
year={1997},
pages={349\ndash357},
}

\bib{yu:supercuspidal}{article}{
    author={Yu, Jiu-Kang},
     title={Construction of tame supercuspidal representations},
   journal={J. Amer. Math. Soc.},
    volume={14},
      date={2001},
    number={3},
     pages={579\ndash 622 (electronic)},
      issn={0894-0347},
    review={\MR {1824988 (2002f:22033)}},
}

\end{biblist}
\end{bibdiv}
\end{document}